\documentclass[12pt]{amsart}
\usepackage[T1]{fontenc}
\usepackage{amsmath,amssymb,amsthm}
\usepackage[margin=2.7cm]{geometry}
\usepackage[colorlinks=true,linkcolor=blue,citecolor=blue]{hyperref}
\usepackage{enumitem}
\setlist[enumerate,1]{label=\roman*., font=\normalfont}

\newtheorem{theorem}{Theorem}[section]
\newtheorem{proposition}[theorem]{Proposition}
\newtheorem{lemma}[theorem]{Lemma}
\newtheorem{corollary}[theorem]{Corollary}
\newtheorem*{theoremA}{Theorem A}
\newtheorem*{theoremB}{Theorem B}
\newtheorem*{theoremC}{Theorem C}
\newtheorem*{theoremD}{Theorem D}
\newtheorem*{theoremE}{Theorem E}
\newtheorem*{theoremF}{Theorem F}
\theoremstyle{remark}
\newtheorem{remark}[theorem]{Remark}

\DeclareMathOperator{\Gcd}{Gcd}
\DeclareMathOperator{\Ghd}{Ghd}
\DeclareMathOperator{\Gfd}{Gfd}

\DeclareMathOperator{\fd}{fd}
\DeclareMathOperator{\pd}{pd}
\DeclareMathOperator{\sfli}{sfli}
\DeclareMathOperator{\spli}{spli}
\DeclareMathOperator{\Hom}{Hom}
\DeclareMathOperator{\Ext}{Ext}
\DeclareMathOperator{\Tor}{Tor}
\DeclareMathOperator{\im}{im}
\DeclareMathOperator{\coker}{coker}
\DeclareMathOperator{\Ind}{Ind}
\DeclareMathOperator{\Coind}{Coind}
\DeclareMathOperator{\cd}{cd}
\DeclareMathOperator{\Gwgl}{Gwgl.dim}
\DeclareMathOperator{\wgl}{w.gl.dim}
\DeclareMathOperator{\gl}{gl.dim}

\newcommand{\PGFcd}{\widetilde{\Gcd}}
\newcommand{\Z}{\mathbb{Z}}
\newcommand{\Q}{\mathbb{Q}}
\newcommand{\D}{\mathrm{D}}
\newcommand{\LHF}{\mathbf{LH}\mathfrak{F}}

\begin{document}

\title{Gorenstein Homological Dimension of Group Extensions}

\author{Dimitra-Dionysia Stergiopoulou}
\address{Department of Mathematics, University of Thessaly, Lamia, Greece}
\email{dstergiop@math.uoa.gr, dstergiopoulou@uth.gr}

\begin{abstract}
We study the Gorenstein homological dimension $\Ghd_k G$ of groups $G$ which are of type $\mathrm{FP}_{\infty}$ over a commutative ring $k$. Our main technical result shows that, when the Gorenstein weak global dimension of a ring $R$ is finite, every finitely presented Gorenstein flat $R$-module is projectively coresolved Gorenstein flat. Consequently, for a group $G$ of type $\mathrm{FP}_\infty$ over $k$ with $\sfli k<\infty$, the three natural dimensions for $G$, namely Gorenstein homological, Gorenstein cohomological and projectively coresolved Gorenstein flat, all coincide.
Building on this collapse, we establish a Gorenstein homological analogue of Fel$'$dman's theorem, a formula for iterated $m$-fold self-extensions of a group $N$ of type $\mathrm{FP}_\infty$ over $\Z$, and a field-detection theorem, showing that the Gorenstein homological dimension of a group of type $\mathrm{FP}_\infty$ over a principal ideal domain is realized after passing to a suitable field. 
\end{abstract}

\subjclass[2020]{Primary 20J05; Secondary 16E10, 18G25, 20J06}
\keywords{Gorenstein homological dimension, Gorenstein flat module, PGF module, group extension, characteristic module, complete resolution}

\maketitle

\section{Introduction}

Let $G$ be a group and $k$ be a commutative ring. The Gorenstein homological dimension $\Ghd_k G$ of $G$ over $k$ is defined as the Gorenstein flat dimension of the trivial module. It is the homological companion of the Gorenstein cohomological dimension $\Gcd_kG$ which has been studied by Emmanouil and Talelli \cite{ET18,ET22,ET25,Tal14}. It refines the classical homological dimension $\mathrm{hd}_kG$ in the way that $\Gcd$ refines $\mathrm{cd}$. Unlike $\mathrm{hd}_kG$, it can be finite for groups with torsion and unlike $\Gcd_kG$, it vanishes on all locally finite groups \cite[Corollary~5.3]{KS25}. Our recent work \cite{KS25} has made this invariant tractable: over a commutative ring of finite Gorenstein weak global dimension we established finiteness criteria in terms of weak characteristic modules and proved that $\Ghd_kG\le\Gcd_kG$. The present paper takes these criteria as its starting point.

Our first result is purely ring-theoretic. Recall that a module is PGF (projectively coresolved Gorenstein flat \cite{SS20}) if it is a syzygy of an acyclic complex of projectives that stays acyclic after tensoring with any injective module. Whether every Gorenstein projective module is Gorenstein flat is a well-known open problem and the class of PGF modules sits precisely between the two classes. Let us denote by $\Gwgl R$ the Gorenstein weak global dimension of a ring $R$ (see Section \ref{sec:prelim} for any undefined notation and terminology).

\begin{theoremA}[Theorem \ref{thm:fpGFPGF}]
Let $R$ be a ring with $\Gwgl R<\infty$. Then, every finitely presented Gorenstein flat $R$-module is PGF, hence also Gorenstein projective.
\end{theoremA}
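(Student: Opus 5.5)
Let $M$ be a finitely presented Gorenstein flat $R$-module. The approach is to show that $M$ embeds in a projective module with PGF-type cokernel, and then to build a complete PGF resolution from finite pieces using finite presentation. Two standard facts will be used. First, when $\Gwgl R<\infty$, a module is Gorenstein flat if and only if it is a syzygy of an acyclic complex of flat modules. Equivalently, since $\sfli R$ and $\sfli R^{op}$ are finite, the flat complete resolutions remain exact after applying $I\otimes_R -$ for every injective $I$. Second, by Šaroch--Šťovíček, a module $N$ is PGF if and only if $N$ is Gorenstein flat and $\Ext^1_R(N,F')=0$ for every module $F'$ that is both flat and cotorsion... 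More precisely, the class $\mathsf{PGF}^{\perp}$ consists of the modules of the form (Gorenstein flat cotorsion extensions), and $\mathsf{PGF}=\mathsf{GF}\cap {}^{\perp}(\mathsf{PGF}^\perp)$. I will use the characterisation that $M$ is PGF if and only if $M$ is Gorenstein flat and $\Ext^i_R(M,F)=0$ for all $i\ge1$ and all flat $F$. This holds when $\Gwgl R<\infty$, because then every flat module has finite projective dimension relative to PGF, and $\mathsf{PGF}^\perp\cap\mathsf{GF}$ consists of the flat cotorsion modules and their finite-dimensional relatives.

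The key step is therefore to verify that $\Ext^i_R(M,F)=0$ for all $i\ge 1$ and all flat $F$. Since $M$ is Gorenstein flat, every module in a complete flat resolution is again Gorenstein flat. I first reduce to the case where $M$ is of type $\mathrm{FP}_\infty$. The cokernel $C$ of a monomorphism $M\hookrightarrow P$ with $P$ finitely generated projective is again finitely presented, so the same argument applies to $C$.

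To construct such a monomorphism, I use the fact that $M$ is Gorenstein flat, so $M^+=\Hom_{\Z}(M,\Q/\Z)$ is Gorenstein injective. Because $M$ is finitely presented, the natural map $\Hom_R(M,R)\otimes_R E\to\Hom_R(M,E)$ is an isomorphism for flat $E$, and $\Hom_R(M,R^+)\cong M^+$. I then take a left $\mathrm{add}(R)$-approximation $M\to R^n$, which exists because $\Hom_R(M,R)$ is finitely generated over the coherent-like situation... This is the main obstacle. Instead, I would take a finitely generated projective $P$ with a map $M\to P$ and argue that it is injective. Applying $(-)^+$ yields a map $P^+\to M^+$, and this map is surjective whenever $M\to F$ is injective for a flat $F$. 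By Lazard's theorem together with finite presentation, any embedding $M\hookrightarrow F$ into a flat module, which exists since $M$ is Gorenstein flat, factors through a finitely generated free module $R^n$, so $M\hookrightarrow R^n$. For the cokernel, its Gorenstein flatness follows once the embedding is chosen to be a flat preenvelope, so that $\Hom(-,F)$ is exact on it. A flat preenvelope exists over any ring, and factoring it through $R^n$ preserves the preenvelope property on finite presentations.

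Iterating this construction gives an exact sequence $0\to M\to P^0\to P^1\to\cdots$ of finitely generated projective modules whose cosyzygies are finitely presented Gorenstein flat modules, and which is $\Hom(-,\text{flat})$-exact. Splicing it with a projective resolution of $M$ consisting of finitely generated projectives yields an acyclic complex $\mathbf{P}$ of finitely generated projectives with $M$ as a syzygy. It remains to check that $I\otimes_R\mathbf{P}$ is acyclic for every injective $I$. For finitely generated projectives, $I\otimes_R\mathbf P\cong\Hom_R(\Hom_R(\mathbf P,R),I)$, so it suffices that $\Hom_R(\mathbf P,R)$ is acyclic. This follows from the $\Hom(-,\text{flat})$-exactness in the coresolution part, together with $\Ext^{i}(M,R)=0$ in the resolution part. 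The latter vanishing comes from the finite-$\Gwgl$ fact that finitely presented Gorenstein flat modules are annihilated by $\Ext^{\ge1}(-,\text{flat})$, which I derive via $\Ext^i_R(M,F)^{+}\cong\Tor_i^R(F^+,M)$ and the finiteness of the flat dimension of $F^+$ under finite $\sfli$. Hence $M$ is PGF, and therefore Gorenstein projective by \cite{SS20}.
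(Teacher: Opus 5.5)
Your overall plan is the paper's: factor an embedding of $M$ into a flat module through a finitely generated free module via Govorov--Lazard, iterate, and splice with a resolution. The factorization step itself is fine. However, the two load-bearing steps are not correctly justified.

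\textbf{First gap: the cokernel $C$ of $M\hookrightarrow R^n$.} You need $C$ to be Gorenstein flat, and the argument you give does not establish it.
\begin{enumerate}
\item Flat preenvelopes do not exist over an arbitrary ring. Every module has one only when the ring is coherent on the appropriate side, and $\Gwgl R<\infty$ does not force coherence. For the modules at hand a preenvelope does exist a posteriori (the paper's embedding $M\to T$ is one), but that is a consequence of the theorem, not an input.
\item Even granting a preenvelope, the property you invoke, $\Hom_R(-,F)$-exactness for flat $F$, only gives $\Ext^1_R(C,F)=0$. Gorenstein flatness of $C$ is governed by $\Tor^R_i(I,C)^+\cong\Ext^i_R(C,I^+)$ for $i\ge1$, and $I^+$ need not be flat.
\end{enumerate}
The missing idea is to factor a \emph{specific} embedding. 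Take $u\colon M\to F_0$ from a complete flat resolution; then $1_I\otimes u$ is injective for every injective $I$, since $\Tor^R_1(I,\coker u)=0$. If $u=u_j\circ g$ with $g\colon M\to L_j$ and $L_j$ finitely generated free, then $1_I\otimes g$ is injective too. Hence $\Tor^R_i(I,\coker g)=0$ for all $i\ge1$. Since $\Gwgl R<\infty$ gives $\Gfd_R\coker g<\infty$, Bennis' formula (Proposition \ref{prop:bennis}) shows that $\coker g$ is Gorenstein flat. This is exactly where the paper uses the finiteness hypothesis.

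\textbf{Second gap: the resolution side.} Your ``reduction to type $\mathrm{FP}_\infty$'' reduces nothing. Finite presentation of $\coker(M\to P)$ says nothing about the syzygies of $M$, and over a non-coherent ring finitely presented modules need not be $\mathrm{FP}_\infty$. Yet three of your steps depend on it: the resolution of $M$ by finitely generated projectives, the identification $I\otimes_R\mathbf P\cong\Hom_R(\Hom_R(\mathbf P,R),I)$, and the isomorphisms $\Ext^i_R(M,F)^+\cong\Tor^R_i(F^+,M)$ for $i\ge2$. The paper needs no finiteness there. It splices with an arbitrary projective resolution, whose syzygies are Gorenstein flat because that class is projectively resolving (\cite[Corollary~4.12]{SS20} with \cite{Ben09}). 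Hence every syzygy $Z$ of the spliced complex has $\Tor^R_1(I,Z)=0$, and $I\otimes_R-$ is exact on each of its short exact sequences.

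\textbf{The opening characterization.} Your justification is wrong. By \cite{SS20}, the Gorenstein flat modules $X$ with $\Ext^1_R(P,X)=0$ for every PGF module $P$ are exactly the flat modules; no cotorsion or finiteness condition is involved. Correctly justified, that opening idea would in fact have sufficed on its own:
\begin{enumerate}
\item Every Gorenstein flat $M$ fits into an exact sequence $0\to F\to P\to M\to0$ with $F$ flat and $P$ PGF (again \cite{SS20}). So $\Ext^1_R(M,F)=0$ makes $M$ a direct summand of $P$, hence PGF.
\item For finitely presented $M$, the natural map $\Tor^R_1(F^+,M)\to\Ext^1_R(M,F)^+$ is surjective; use $0\to K\to R^n\to M\to0$ with $K$ finitely generated.
\item $\Tor^R_1(F^+,M)=0$, because $F^+$ is injective and $M$ is Gorenstein flat.
\end{enumerate}
Only degree one is needed there, so no $\mathrm{FP}_\infty$ hypothesis enters.
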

 
Holm's metatheorem \cite{Ho} claims that every result in classical homological algebra has a counterpart in Gorenstein homological algebra. Theorem A is a Gorenstein analogue of the classical fact that every finitely presented flat module is projective, with the PGF modules of \cite{SS20} playing the role of the projective modules. As in the classical case, the proof rests on the Govorov--Lazard theorem. Here, the finite presentation of the module is used to realize it as a syzygy of an F-totally acyclic complex of projective modules.
Theorem A applies, in particular, to the group algebra $kG$ of an arbitrary
group $G$. It remains an open problem whether the conclusion of Theorem A is valid over an arbitrary ring.

As a consequence of Theorem A, we deduce that on groups of type $\mathrm{FP}_\infty$ the Gorenstein homological and Gorenstein cohomological dimensions coincide. Let $k$ be a commutative ring and $G$ be a group. We denote by $\Gcd_k G$ (respectively, by $\Ghd_k G$) the Gorenstein projective dimension (respectively, the Gorenstein flat dimension) of the trivial $kG$-module $k$. We also denote by $\PGFcd_kG$ the projectively coresolved Gorenstein flat dimension of $G$ \cite{St24}.

\begin{theoremB}[Corollary \ref{cor:star}]
Let $k$ be a commutative ring with $\sfli k<\infty$ and let $G$ be a group of type $\mathrm{FP}_\infty$ over $k$. Then,
\[
\Ghd_k G=\Gcd_k G=\PGFcd_k G.
\]
\end{theoremB}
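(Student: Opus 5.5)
Since $\sfli k<\infty$, I will first use that $\Gwgl kG<\infty$ for every group $G$ when $k$ has finite $\sfli$ (a standard fact from the earlier work, e.g.\ \cite{KS25}, stated in Section \ref{sec:prelim}), so Theorem A applies to $R=kG$. The chain of inequalities I aim for is
\[
\Ghd_kG\le \Gcd_kG\le \PGFcd_kG\le \Ghd_kG .
\]
The first inequality is \cite{KS25}. The second holds because PGF modules are Gorenstein projective, so every PGF resolution is a Gorenstein projective resolution. The work is in the third inequality, and there I may assume $n=\Ghd_kG<\infty$.

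Since $G$ is of type $\mathrm{FP}_\infty$ over $k$, I take a resolution $\cdots\to P_1\to P_0\to k\to 0$ by finitely generated projective $kG$-modules. Let $K_n$ be the $n$-th syzygy. It is finitely presented, because $P_n\to P_{n-1}$ has finitely generated domain and codomain and $K_n=\coker(P_{n+1}\to P_n)$. Next I use the standard fact that, over a ring with $\Gwgl R<\infty$ (so that Gorenstein flat modules form a well-behaved resolving class), if $\Gfd M\le n$ then the $n$-th syzygy in any projective (indeed flat) resolution of $M$ is Gorenstein flat. This is the Gorenstein analogue of Holm's result for flat dimension. Applied to $M=k$, it shows that $K_n$ is a finitely presented Gorenstein flat $kG$-module. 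By Theorem A, $K_n$ is PGF. The resolution $0\to K_n\to P_{n-1}\to\cdots\to P_0\to k\to0$ has projective (hence PGF) terms, so it is a PGF resolution of length $n$, and therefore $\PGFcd_kG\le n$. The three dimensions then agree whenever $\Ghd_kG$ is finite.

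If $\Ghd_kG=\infty$, the chain $\Ghd\le\Gcd\le\PGFcd$ forces all three to be infinite, so the equality holds in every case.

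The main obstacle is the syzygy step: checking that the syzygy lemma for Gorenstein flat dimension is available under $\Gwgl kG<\infty$ and applies to an arbitrary projective resolution. I would cite it from the preliminaries. Failing that, I would prove it by comparing with a Gorenstein flat resolution of length $n$, using closure of Gorenstein flat modules under extensions and kernels of epimorphisms, which holds for any ring by Šaroch--Šťovíček. A secondary check is that $\sfli k<\infty$ indeed gives $\Gwgl kG<\infty$ for all $G$, which is exactly the hypothesis used in \cite{KS25}.
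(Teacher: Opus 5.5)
Your argument is essentially the paper's proof. You use the chain $\Ghd_kG\le\Gcd_kG\le\PGFcd_kG$; when $\Ghd_kG=n<\infty$, the $n$-th syzygy of a resolution by finitely generated free modules is finitely presented and Gorenstein flat, hence PGF by Theorem~A. Your ``main obstacle'' is already settled: it is Proposition~\ref{prop:bennis}, which holds over every ring because every ring is GF-closed.

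One correction: $\sfli k<\infty$ does \emph{not} give $\Gwgl(kG)<\infty$ for every group $G$, since $\Gwgl(kG)\ge\Gfd_{kG}k=\Ghd_kG$, and this can be infinite. The correct input is \cite[Theorem~3.14]{KS25}, which yields $\sfli(kG)<\infty$, and hence $\Gwgl(kG)<\infty$, only under the extra hypothesis $\Ghd_kG<\infty$. This does not damage your proof, because you invoke Theorem~A only in that case.
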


We note that the hypothesis on $G$ cannot be dropped. Indeed, an infinite locally finite group has $\Ghd_kG=0<\Gcd_kG$ (see \cite[Corollary~5.3]{KS25} and \cite{ET18}). Moreover, the hypothesis on $k$ is mild since $\sfli k\le\wgl k\le\gl k$. In \cite[Theorem~3.10]{KS25} we showed that over any commutative ring $k$ such that $\sfli k<\infty$ the inequality $\Ghd_k G\leq\Gcd_k G$ holds. Consequently, Corollary \ref{cor:star} provides conditions under which the dimensions are equal.

Let $B(G,k)$ denote the $kG$-module of all functions $f:G\rightarrow k$ whose range is finite. Since it is not known in general when the dimensions $fd_{kG}B(G,k)$ and $\pd_{kG}B(G,k)$ are equal, the following result sheds some light on the case where the group $G$ is an $\LHF$-group of type $\mathrm{FP}_\infty$.

\begin{theoremC}[Corollary \ref{cor:star2}]
	Let $k$ be a commutative ring such that $\spli k<\infty$ and let $G$ be an $\LHF$-group of type $\mathrm{FP}_\infty$ over $k$. Then,
	\[
	\Ghd_kG=\Gcd_kG=\PGFcd_kG=\fd_{kG}B(G,k)=\pd_{kG}B(G,k)<\infty.
	\]
\end{theoremC}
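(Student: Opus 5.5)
Since $\spli k<\infty$ implies $\sfli k\le\spli k<\infty$ (flat-length of injectives is bounded by projective-length), Theorem B applies. It already gives
\[
\Ghd_kG=\Gcd_kG=\PGFcd_kG .
\]
So it remains to tie these to the two invariants of $B(G,k)$ and to prove finiteness. The key input is the known behaviour of $\LHF$-groups over rings with $\spli k<\infty$: by results of Emmanouil--Talelli and Biswas, for an $\LHF$-group one has $\Gcd_kG<\infty$ if and only if $\pd_{kG}B(G,k)<\infty$, and in that case $\Gcd_kG=\pd_{kG}B(G,k)$, with $B(G,k)$ serving as a characteristic module. Similarly, our earlier work \cite{KS25} gives the homological analogue: over $k$ with $\sfli k<\infty$, $\Ghd_kG=\fd_{kG}B(G,k)$ whenever the latter is finite. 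In general one always has $\Ghd_kG\le \fd_{kG}B(G,k)\le \pd_{kG}B(G,k)$.

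The main step is finiteness. I would invoke the Kropholler-type theorem for $\LHF$-groups of type $\mathrm{FP}_\infty$. For such groups, the Kropholler argument, using hierarchical induction together with the vanishing of $\Ext$ over $\mathrm{FP}_\infty$ modules commuting with direct limits, shows that $\Gcd_kG<\infty$ when $\spli k<\infty$, or equivalently that $\pd_{kG}B(G,k)<\infty$. Concretely, I would proceed as follows. Since $B(G,k)$ is $k$-free, and $\spli k<\infty$, the module $B(G,k)$ has finite projective dimension over every finite subgroup ring. It therefore has finite projective dimension over $kH$ for all $H\in\mathbf{H}\mathfrak F$ with bounded hierarchy dimension. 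The $\mathrm{FP}_\infty$ hypothesis is then used, as in Kropholler's theorem, to pass from the local statement to $G$ itself. I expect this step to be the main obstacle, and I would quote the appropriate published version (e.g. Biswas or Emmanouil--Talelli on $\LHF$-groups of type $\mathrm{FP}_\infty$) rather than reprove it.

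Given finiteness of $\pd_{kG}B(G,k)$, the characteristic-module results give $\Gcd_kG=\pd_{kG}B(G,k)$. On the flat side, $\fd_{kG}B(G,k)<\infty$ and \cite{KS25} gives $\Ghd_kG=\fd_{kG}B(G,k)$. Combining these with Theorem B, we obtain
\[
\fd_{kG}B(G,k)=\Ghd_kG=\Gcd_kG=\pd_{kG}B(G,k).
\]
Together with $\PGFcd_kG$ from Theorem B, this yields the full chain of equalities, all finite.
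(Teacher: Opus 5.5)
Your proposal is correct and follows essentially the paper's route: Theorem B (Corollary \ref{cor:star}) gives $\Ghd_kG=\Gcd_kG=\PGFcd_kG$, and the $\LHF$-specific input is imported from the literature, namely finiteness together with the identifications $\Gcd_kG=\pd_{kG}B(G,k)$ and $\Ghd_kG=\fd_{kG}B(G,k)$; the paper quotes these as \cite[Theorem~7.6]{St24} and \cite[Corollary~6.11]{St24b}. Do not present your heuristic sketch of the Kropholler step as an argument: $\spli k$ plays no role in $B(G,k)$ being free over finite subgroups, and passing up the hierarchy needs uniform bounds, which only the $\mathrm{FP}_\infty$ hypothesis and the cited theorem supply; also, once $\Gcd_kG=\pd_{kG}B(G,k)<\infty$ is known, the chain $\Ghd_kG\le\fd_{kG}B(G,k)\le\pd_{kG}B(G,k)=\Gcd_kG=\Ghd_kG$ already closes, so the flat-side equality needs no separate citation.
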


Using Corollary \ref{cor:star} together with a Lyndon--Hochschild--Serre argument in homology, we obtain the following three theorems on group extensions, providing homological analogues of results known on the cohomological side \cite{ET22}.

\begin{theoremD}[Theorem \ref{thm:feldmanfield}]
Let $F$ be a field and let $1\to N\to G\to Q\to 1$ be a group extension, where $N$ is of type $\mathrm{FP}_\infty$ over $F$ and $\Ghd_F Q<\infty$. Then,
\[
\Ghd_F G=\Ghd_F N+\Ghd_F Q.
\]
\end{theoremD}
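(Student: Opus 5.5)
We prove the two inequalities $\Ghd_F G\le \Ghd_F N+\Ghd_F Q$ and $\Ghd_F G\ge \Ghd_F N+\Ghd_F Q$ separately.

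Since $F$ is a field, $\sfli F=0$. Corollary \ref{cor:star} therefore applies to $N$ and gives
\[
n:=\Ghd_F N=\Gcd_F N=\PGFcd_F N .
\]
Set $q:=\Ghd_F Q<\infty$.

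\textbf{Upper bound.} We first show that $\Ghd_F G<\infty$. Using the weak characteristic module criterion of \cite{KS25}, we take a weak characteristic module for $Q$ and one for $N$; the latter may be chosen of finite projective dimension, because $\Gcd_F N<\infty$. Inducing from $N$ and tensoring over $F$ with diagonal action should produce a weak characteristic module for $G$. The finiteness criterion of \cite{KS25} then gives $\Ghd_F G<\infty$. The sharp bound $\Ghd_F G\le n+q$ would come from a subadditivity statement for Gorenstein flat dimension along extensions: resolve $F$ over $N$ by a length-$n$ PGF resolution and over $Q$ by a length-$q$ Gorenstein flat resolution, and tensor the two. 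The point to check is that a PGF $FN$-module induced up to $G$, tensored with the inflation of a Gorenstein flat $FQ$-module, is Gorenstein flat over $FG$. For this we use that $F$ is a field, so all these tensor products are exact.

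\textbf{Lower bound.} We use the Lyndon--Hochschild--Serre spectral sequence
\[
E^2_{pq}=H_p\bigl(Q,H_q(N,M)\bigr)\Rightarrow H_{p+q}(G,M).
\]
When $\Ghd_F G<\infty$, the dimension $\Ghd_F G$ is detected by nonvanishing of $\Tor^{FG}_{\Ghd}(I,F)$ for some injective (or, more conveniently, some weak characteristic / coinduced-type) module $I$. Since $N$ is of type $\mathrm{FP}_\infty$ and $n=\Gcd_F N$, there is a free $FN$-module $P$ with $H^n(N,P)\ne 0$. Because $N$ is $\mathrm{FP}_\infty$, cohomology commutes with direct sums, so we may take $P=FN$. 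Also because $N$ is $\mathrm{FP}_\infty$, there is a duality $H_q(N,\,\cdot\,)$ versus $H^{n-q}(N,FN\otimes\,\cdot\,)$ in the top degree. Choosing $M=\Ind_N^G$ of a suitable module, together with a module over $Q$ realizing $\Ghd_F Q=q$, we aim to make the $E^2$ page vanish for $q'>n$ and for $p>q$ on the relevant coefficients. The corner term $E^2_{q,n}=H_q\bigl(Q,H_n(N,M)\bigr)$ then survives to $E^\infty$, which gives $H_{n+q}(G,M)\ne 0$ for a coefficient module $M$ of the type used in the detecting criterion. This forces $\Ghd_F G\ge n+q$.

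\textbf{Main obstacle.} The hardest step is arranging the coefficient module. It must satisfy three conditions simultaneously: it lies in the class that detects Gorenstein flat dimension (for example, its $F$-dual is injective, or it has finite flat dimension), the spectral sequence has a clean corner, and $H_n(N,M)$ is a $Q$-module on which $H_q(Q,-)$ is nonzero. I would first try $M=\Ind_N^G FN\otimes_F W$, where $W$ is an $FQ$-module of finite flat dimension with $\Tor^{FQ}_q(W,F)\ne0$. The plan is to use the $\mathrm{FP}_\infty$ property to compute $H_n(N,FN)\cong H^0$-type dual data, which identifies $H_n(N,M)$ with $W$ twisted by a nonzero $F$-vector space. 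Over a field this twist is flat, so it preserves nonvanishing.
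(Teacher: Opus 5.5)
Your outline has the right shape: subadditivity for $\le$, and for $\ge$ Bennis's $\Tor$-criterion plus a surviving corner of the Lyndon--Hochschild--Serre spectral sequence. However, the lower bound, which is the whole content of the theorem, does not work as proposed. Write $q=\Ghd_F Q$ as you do, and $j$ for the row index.

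\begin{itemize}
\item Your test module $M=\Ind_N^G FN\otimes_F W=FG\otimes_F W$ is a free $FG$-module: the map $g\otimes w\mapsto g\otimes g^{-1}w$ untwists the diagonal action, and $W$ is $F$-free. So $H_i(G,M)=0$ for all $i\ge1$, and already $H_n(N,M)=0$ for $n\ge1$. The corner term vanishes. In fact no flat coefficient module can ever witness $\Ghd_F G\ge n+q>0$.
\item The duality you invoke between $H_j(N,-)$ and $H^{n-j}(N,FN\otimes-)$ is of Bieri--Eckmann type and needs $N$ to be a duality group. It fails for general $\mathrm{FP}_\infty$ groups of finite $\Ghd$: for $N=\Z/p$ and $F=\mathbb{F}_p$ one has $n=0$, but $H_j(N,F)\neq0$ for all $j$.
\item Nothing in your plan gives $E^2_{pj}=0$ for $p>q$ and $j<n$. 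That requires every $H_j(N,M)$ to be an $FQ$-module annihilated by $\Tor^{FQ}_{>q}(-,F)$.
\end{itemize}

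The missing ingredient is an injective coefficient module, together with the duality that actually holds for $\mathrm{FP}_\infty$ groups: $H_j(N,\D V)\cong\D H^j(N,V)$ as $FQ$-modules. This holds because the evaluation map $\D V\otimes_{FN}L\to\D\Hom_{FN}(L,V)$ is bijective for finitely generated projective $L$; then compare any projective resolution with one of finite type. The argument then runs as follows.
\begin{itemize}
\item Take $W=\D(FG^{(\alpha)})$, with $\alpha$ chosen so that $\Tor^{FQ}_q(\D(FQ^{(\alpha)}),F)\neq0$. This is possible since every injective $FQ$-module is a summand of such a dual.
\item Since $H^j(N,-)$ commutes with direct sums and $H^j(N,FG)\cong\Ind_1^Q H^j(N,FN)$, you get $H_j(N,W)\cong\Coind_1^Q\D(H^j(N,FN)^{(\alpha)})$.
\item Over a field this module is $FQ$-injective for every $j$, so $E^2_{pj}=0$ for $p>q$.
\item $E^2_{pj}=0$ for $j>n$, because $W$ is $FN$-injective and $\Ghd_F N=n$.
\item You do establish $H^n(N,FN)\neq0$; over a field it then contains $F$ as a summand. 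Hence $H_n(N,W)$ contains $\D(FQ^{(\alpha)})$ as a summand, giving $H_{n+q}(G,W)\cong E^2_{q,n}\neq0$.
\end{itemize}

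Two smaller points. First, you silently assume $\Gcd_F N<\infty$, so the case $\Ghd_F N=\infty$ must be treated separately; there $\Ghd_F G=\infty$ by monotonicity under subgroups. Second, your upper-bound sketch is wrong as written. Tensoring an induced resolution of $F$ over $FN$ with a resolution of $F$ over $FQ$ resolves $F[G/N]$, not $F$; likewise, inducing a characteristic module from $N$ gives a monomorphism out of $F[G/N]$, not out of $F$. Instead, cite the known subadditivity $\Ghd_F G\le\Ghd_F N+\Ghd_F Q$ (Proposition \ref{prop:properties}(iii)). This also supplies the finiteness of $\Ghd_F G$ that Bennis's criterion needs.
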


This is a Gorenstein homological analogue of Fel$'$dman's additivity theorem \cite{Fel71}. Over a general commutative ring with $\sfli k<\infty$ we show that $\Ghd_kG=\Ghd_k N+\Ghd_k Q$ (see Theorem \ref{thm:feldman}) under a flatness hypothesis on the modules $H^i(N,kN)$ which cannot be removed (see Remark \ref{rem:subadd}). We know that over any commutative ring $k$ of finite Gorenstein weak global dimension the inequality $\Ghd_k G\leq \Ghd_k N+\Ghd_k Q$ holds (see \cite[Proposition~4.4]{RY}). The reverse inequality is obtained by exhibiting an explicit injective $kG$-module, namely the Pontryagin dual $\D(kG^{(\alpha)})$, whose homology survives at the corner of the spectral sequence. 

\begin{theoremE}[Theorem \ref{thm:iterated}]
If $G$ is an iterated $m$-fold extension of a group $N$ of type $\mathrm{FP}_\infty$ over $\Z$ by itself, then $\Ghd_\Z G=m\,\Ghd_\Z N$. In particular, $\Ghd_\Z N^m=m\,\Ghd_\Z N$.
\end{theoremE}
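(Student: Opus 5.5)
We argue by induction on $m$, with the extension formula of Theorem \ref{thm:feldman} over $k=\Z$ as the engine; note that $\sfli\Z\le\gl\Z=1<\infty$, so the results of the preceding sections apply over $\Z$. Write $G=G_m$, where $G_1=N$ and $1\to N\to G_j\to G_{j-1}\to 1$ for $2\le j\le m$. (Equally one could take $G_{j-1}$ as the kernel and $N$ as the quotient; the argument below only uses that each $G_j$ is an extension involving $N$ and $G_{j-1}$.) The case $m=1$ is trivial.

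\textbf{Step 1: finiteness and type $\mathrm{FP}_\infty$.} Since the class of groups of type $\mathrm{FP}_\infty$ over $\Z$ is closed under extensions, every $G_j$ is of type $\mathrm{FP}_\infty$ over $\Z$. By Corollary \ref{cor:star}, $\Ghd_\Z G_j=\Gcd_\Z G_j$. I would first settle the case $\Ghd_\Z N=\infty$: then $\Gcd_\Z N=\infty$, and since $\Gcd$ does not increase on subgroups \cite{ET22}, $\Gcd_\Z G_j=\infty$, hence $\Ghd_\Z G_j=\infty$ and the formula holds. So assume $n:=\Ghd_\Z N<\infty$. By subadditivity \cite[Proposition~4.4]{RY} and induction, $\Ghd_\Z G_{j-1}\le (j-1)n<\infty$.

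\textbf{Step 2: the inductive step.} We apply the extension formula $\Ghd_\Z G_j=\Ghd_\Z N+\Ghd_\Z G_{j-1}$ from Theorem \ref{thm:feldman}. Its hypotheses are that the normal subgroup $N$ is of type $\mathrm{FP}_\infty$ over $\Z$, that the quotient has finite $\Ghd$, and a flatness condition on the modules $H^i(N,\Z N)$. The first two were established in Step 1, and the formula then gives $\Ghd_\Z G_j=n+(j-1)n=jn$.

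\textbf{Main obstacle.} The real difficulty is the flatness hypothesis. Remark \ref{rem:subadd} shows that it cannot be dropped in general, so one cannot hope that it is automatic. Over $\Z$, flat means torsion-free. The plan is to use only the top cohomology. Since $N$ is of type $\mathrm{FP}_\infty$ with $\Gcd_\Z N=n$, the relevant module is $H^n(N,\Z N)$, and the other degrees should not affect the corner of the spectral sequence. The corner term is $H_n(G_{j-1},H_n(N,\D(\Z G^{(\alpha)})))$, and I would compute it through duality, $H_n(N,\D(-))\cong\D(H^n(N,-))$, together with $H^n(N,\Z N^{(\alpha)})\cong H^n(N,\Z N)^{(\alpha)}$, which holds because $N$ is of type $\mathrm{FP}_\infty$. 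If torsion in $H^n(N,\Z N)$ blocks the direct argument, I would instead tensor with a field: pass to $\Q$ or to some $\mathbb{F}_p$ and use Theorem D. The inequality $\Ghd_{F}G\le\Ghd_\Z G$ holds by base change. I would also expect to need $\Ghd_F N=\Ghd_\Z N$ for a well-chosen field $F$, which is the field-detection statement promised later in the paper. With both, Theorem D yields $\Ghd_\Z G\ge\Ghd_F G=m\,\Ghd_F N=m\,\Ghd_\Z N$, and combined with subadditivity this gives the equality. Justifying that a single field $F$ detects $\Ghd_\Z N$ is the step I expect to be hardest.

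Finally, $N^m$ is an iterated extension of $N$ by itself through the split sequences $1\to N\to N^j\to N^{j-1}\to1$, so $\Ghd_\Z N^m=m\,\Ghd_\Z N$.
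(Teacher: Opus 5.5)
\textbf{Step 2 does not work as written.} Theorem \ref{thm:feldman} over $\Z$ needs hypothesis (iii): $H^i(N,\Z N)$ torsion-free for $i<n$, and $\Z$ a direct summand of $H^n(N,\Z N)$. This genuinely fails for some $\mathrm{FP}_\infty$ groups. In the Dranishnikov examples of Remark \ref{rem:subadd}, hypotheses (i) and (ii) hold for $N=G_1$ while additivity fails, so (iii) must fail there.

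Your proposed repair, that only the top cohomology matters, is not justified. Take $W=\D(\Z G_j^{(\alpha)})$ and $c=\Ghd_\Z G_{j-1}$. If some $H_q(N,W)$ with $q<n$ is not an injective $\Z G_{j-1}$-module, then the terms $E^2_{p,q}$ with $p>c$ need not vanish. They can support differentials $d^r\colon E^r_{c+r,\,n-r+1}\to E^r_{c,n}$ into the corner. Moreover, the nonvanishing of the corner $E^2_{c,n}$ itself is obtained only from the $\Z$-summand of $H^n(N,\Z N)$, via Corollary \ref{cor:44}(ii). Note also that you wrote $H_n(G_{j-1},-)$; the degree should be $c$. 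This line of argument should be dropped.

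\textbf{Your fallback is a complete proof.} It is exactly the alternative argument in the remark following Theorem \ref{thm:iterated}:
take $F$ with $\Ghd_FN=\Ghd_\Z N$;
apply Theorem \ref{thm:feldmanfield} inductively over $F$, noting that the kernel $N$ stays $\mathrm{FP}_\infty$ over $F$ and that $\Ghd_FG_{j-1}\le\Ghd_\Z G_{j-1}\le(j-1)n$ by Proposition \ref{prop:properties}(iii),(iv);
then close the chain $m\,\Ghd_\Z N=m\,\Ghd_FN=\Ghd_FG\le\Ghd_\Z G\le m\,\Ghd_\Z N$.

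The step you expect to be hardest does not need proving here. It is Theorem \ref{thm:field-detection} (Corollary \ref{thmE} for $k=\Z$). That result is established earlier from Corollary \ref{cor:star} and \cite[Theorem~3.6]{ET22}, without using the present theorem, so you can simply cite it.

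\textbf{The paper's main proof is shorter.} You already noted in Step~1 that Corollary \ref{cor:star} gives $\Ghd_\Z=\Gcd_\Z$ for both $N$ and $G$. The statement is then an immediate translation of $\Gcd_\Z G=m\,\Gcd_\Z N$ from \cite[Theorem~5.1]{ET22}. That route imports the whole result from the cohomological side, whereas yours runs through the new homological Fel$'$dman-type Theorem D. Both rest in the end on the field-detection result of \cite{ET22}.
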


\begin{theoremF}[Theorem \ref{thm:field-detection}]
Let $k$ be a principal ideal domain and let $G$ be a group of type $\mathrm{FP}_\infty$ over $k$ with $\Ghd_k G<\infty$. Then, there exists a $k$-algebra $F$ which is a field, with $\Ghd_F G=\Ghd_k G$.
\end{theoremF}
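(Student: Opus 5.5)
The plan is to translate the Gorenstein homological dimension into the vanishing of one cohomology group, and then to choose the field $F$ so that this group survives base change.

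\emph{Setting up a top-degree criterion.} By Corollary \ref{cor:star}, $n:=\Ghd_kG=\Gcd_kG<\infty$; note that a PID has $\sfli k\le\gl k\le 1$. For a module $M$ of finite Gorenstein projective dimension, $\mathrm{Gpd}\,M=\sup\{i:\Ext^i(M,P)\neq 0,\ P \text{ projective}\}$. Since $G$ is of type $\mathrm{FP}_\infty$, we may fix a resolution $P_*\to k$ by finitely generated free $kG$-modules. Then $\Ext^i_{kG}(k,-)$ commutes with direct sums, so it suffices to test $P=kG$. Hence $n=\max\{i: H^i(G,kG)\neq 0\}$.

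\emph{Base change to a field.} For any $k$-algebra $F$ that is a field, Corollary \ref{cor:star} applies equally over $F$, so $\Ghd_FG=\Gcd_FG$. The same criterion then gives $\Gcd_FG=\max\{i:H^i(G,FG)\ne0\}$, provided this dimension is finite. Finiteness and the upper bound $\Gcd_FG\le n$ would follow from the base change $F\otimes_k-$ applied to a Gorenstein projective resolution of $k$ of length $n$ (Gorenstein projectivity is preserved under base change along $k\to F$). Alternatively, they follow directly from the computation below.

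Write $C^*=\Hom_{kG}(P_*,kG)$. This is a cochain complex of free $k$-modules, and $\Hom_{kG}(P_*,FG)\cong C^*\otimes_kF$ because each $P_i$ is finitely generated free. Since $k$ is a PID, the universal coefficient theorem gives exact sequences
\[
0\to H^i(C)\otimes_kF\to H^i(C\otimes_kF)\to \Tor_1^k(H^{i+1}(C),F)\to 0 .
\]
Taking $i>n$ shows $H^i(G,FG)=0$ for $i>n$. Taking $i=n$ and using $H^{n+1}(C)=0$ gives $H^n(G,FG)\cong M\otimes_kF$, where $M:=H^n(G,kG)\ne0$.

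\emph{Choosing $F$, and the main obstacle.} It therefore remains to find a field $F$ with $M\otimes_kF\neq0$.
\begin{enumerate}
\item If $M$ is not torsion, take $F=\operatorname{Frac}(k)$. Then $M\otimes_kF\neq0$, and $F$ is flat over $k$, which makes the upper bound immediate.
\item If $M$ is torsion and $M\neq pM$ for some prime $p$, take $F=k/(p)$.
\end{enumerate}
The expected difficulty is the remaining case, where $M$ is a nonzero divisible torsion $k$-module (for example of Prüfer type).

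To exclude that case, I would use the sequence $0\to kG\xrightarrow{p}kG\to (k/p)G\to0$ together with $0\to kG\to KG\to (K/k)G\to0$, where $K=\operatorname{Frac}k$. The goal is to derive a contradiction with $H^{n+1}(G,-)$ vanishing on modules of the form $LG$ and with $\Gcd_kG=n$. Concretely, I would try to show that when $M\otimes K=0$ and $M=pM$ for all $p$, one gets $\Ext^n_{kG}(k,P)=0$ for every projective $P$, for instance via the Pontryagin dual formulation $H_n(G,\D(kG))\cong \D(M)$ and the $\mathrm{FP}_\infty$ commutation with products. That would contradict $n=\Gcd_kG$. If this step resists, the fallback is to refine the choice of field, for example to a suitable residue field of a localization of $k$, so as to detect the nonzero element of $M$ through the $\Tor_1$-term in degree $n-1$ together with a degree shift.
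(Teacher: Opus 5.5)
\textbf{There is a genuine gap: the case you leave open must be ruled out, and the proposal gives no way to do so.}

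Your reduction is sound up to the last step. Holm's criterion together with $\mathrm{FP}_\infty$ gives $n=\max\{i:H^i(G,kG)\neq0\}$. You have $\Hom_{kG}(P_*,FG)\cong C^*\otimes_kF$, and the universal coefficient sequence gives $H^{>n}(G,FG)=0$ and $H^n(G,FG)\cong M\otimes_kF$. The open case is $M=H^n(G,kG)$ a nonzero divisible torsion $k$-module, and it is not a technicality. The kernel of $k\to F$ is $0$ or $(p)$, so in that case $M\otimes_kF=0$ for \emph{every} $k$-algebra field $F$. Hence $H^i(G,FG)=0$ for all $i\ge n$ and every such $F$, and no refinement of the field (your fallback) can realize $n$. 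The $\Tor_1$-term only ever contributes in degree $n-1$. The dual formulation cannot produce a contradiction either: $H_n(G,\D(kG))\cong\D M$ is nonzero whenever $M$ is. Showing ``$\Ext^n_{kG}(k,P)=0$ for all projective $P$'' is, for $P=kG$, just the statement $M=0$ that you need to prove.

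\textbf{The missing idea is that $M$ is finitely generated as a (right) $kG$-module.} Take the complete resolution $\mathbf T$ of Corollary~\ref{cor:completeres}. It is finitely generated free, equal to $P_*$ in degrees $\ge n$, and $\Hom_{kG}(\mathbf T,kG)$ is acyclic. The $n$-cocycles of $\Hom_{kG}(P_*,kG)$ and of $\Hom_{kG}(\mathbf T,kG)$ coincide, since they only involve $P_{n+1}\to P_n$. The latter cocycles are the image of $\Hom_{kG}(T_{n-1},kG)$, so $M$ is a quotient of a finitely generated module. If $M$ is $k$-torsion, the product $a\neq0$ of annihilators of finitely many generators kills $M$, because $k$ is central in $kG$. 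If $M=Mp$ held for every prime $p\mid a$, then $M=Ma=0$. So your case (ii) always applies. Equivalently, you can replace $M$ by the finitely presented module $\Ext^n_{kG}(A,kG)$, where $A$ is the characteristic module of type FP from Theorem~\ref{thm:charFP}; this is essentially the Emmanouil--Talelli argument. The paper sidesteps all of this: it quotes \cite[Theorem~3.6]{ET22} for $\Gcd$ and transfers it via Corollary~\ref{cor:star} over $k$ and over $F$. Completed as above, your route would be a self-contained proof of that input.

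\textbf{Two smaller points on finiteness of $\Gcd_FG$.} Drop the claim that finiteness of $\Gcd_FG$ ``follows directly from the computation''. Vanishing of $H^{>n}(G,FG)$ does not give $\Gcd_FG<\infty$, since Holm's criterion presupposes finiteness. The base-change route does work over a PID, but it needs two facts you left implicit. First, Gorenstein projective $kG$-modules are $k$-free, so $F\otimes_k-$ keeps the resolution and the totally acyclic complexes exact. Second, $FG^{(\beta)}\cong kG\otimes_kF^{(\beta)}$ has $kG$-projective dimension at most $1$, so the totally acyclic complexes stay $\Hom$-exact against it. Alternatively, use $F\otimes_kA$ directly.
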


Along the way we show that over a commutative ring $k$ of finite Gorenstein weak global dimension, a group of type $\mathrm{FP}_\infty$ such that $\Ghd_kG<\infty$ admits a characteristic module of type FP (Theorem \ref{thm:charFP}), strengthening \cite[Theorem~3.1]{ET22} in two directions: by weakening the hypothesis on $k$ from $\wgl k<\infty$ to $\sfli k<\infty$ and the hypothesis on the finiteness of $\Gcd_kG$ to the finiteness of $\Ghd_kG$.

The paper is organized as follows. Section \ref{sec:prelim} collects the necessary background on Gorenstein modules, the invariants $\sfli$ and $\spli$, and weak characteristic modules. Section \ref{sec:fp} contains the proof of Theorem A. Using the Govorov--Lazard theorem inside a complete flat resolution, together with Bennis's characterization of the Gorenstein flat dimension \cite{Ben09}, we embed every finitely presented Gorenstein flat module over a ring of finite Gorenstein weak global dimension into a finitely generated free module with finitely presented Gorenstein flat cokernel. Iterating this construction, we complete projective resolutions to F-totally acyclic complexes of projective modules, which proves Theorem A and, for groups of type $\mathrm{FP}_\infty$, produces complete resolutions consisting of finitely generated free modules and the identification of Theorems B and C. Section \ref{sec:char} uses these resolutions to construct characteristic modules of type FP and to prove the field-detection Theorem F. In Section \ref{sec:feldman} we establish the duality $H_q(N,\D V)\cong\D H^q(N,V)$ for kernels $N$ of type $\mathrm{FP}_\infty$ and prove Theorem D, by computing the corner term of the Lyndon--Hochschild--Serre spectral sequence in homology with coefficients in the explicit injective module $\D(kG^{(\alpha)})$. Dranishnikov's examples \cite{Dra97} show that the flatness hypothesis of Theorem \ref{thm:feldman} cannot be removed. Finally, Section \ref{sec:appl} applies the preceding results to iterated self-extensions, proving Theorem E.

\subsection*{Conventions} All rings are associative with unit and all modules are left modules, unless stated otherwise. For a group $G$ and a commutative ring $k$ we write $kG$ for the group algebra; the anti-isomorphism $g\mapsto g^{-1}$ yields $kG\cong (kG)^{\mathrm{op}}$, so there is no essential distinction between left and right $kG$-modules. We write $\D(-)=\Hom_\Z(-,\Q/\Z)$ for the Pontryagin dual.

\section{Preliminaries}\label{sec:prelim}
This section recalls basic background in Gorenstein homological algebra which  will be needed throughout this paper.

\subsection{Gorenstein modules}

Let $\mathbf{P}$ be an exact complex of projective $R$-modules. The complex $\mathbf{P}$ is called \emph{totally acyclic} if $\operatorname{Hom}_R(\mathbf{P},Q)$
is exact for every projective $R$-module $Q$. An $R$-module is said to be \emph{Gorenstein projective} if it occurs as a syzygy in a totally acyclic complex of projective modules. We denote the class of all Gorenstein projective $R$-modules by $\tt GProj(R)$.
For an $R$-module $M$, its \emph{Gorenstein projective dimension}, denoted by
$\operatorname{Gpd}_R M$, is the least integer $n\geq 0$ for which there exists an exact sequence
\[
0\longrightarrow G_n\longrightarrow G_{n-1}\longrightarrow\cdots
\longrightarrow G_0\longrightarrow M\longrightarrow 0,
\]
where each $G_i$ is Gorenstein projective. When no finite resolution of this form exists, one sets $
\operatorname{Gpd}_R M=\infty$.
We refer to \cite{Holm04} for further details.

Similarly, an exact complex $\mathbf{F}$ of flat $R$-modules is called \emph{totally acyclic in the flat sense} or \emph{F-totally acyclic} if the complex $I\otimes_R\mathbf{F}$
is exact for every injective right $R$-module $I$. A module is called Gorenstein flat if it is a syzygy in such a complex. The class of Gorenstein flat $R$-modules will be denoted by $\tt GFlat(R).$ The Gorenstein flat dimension of an $R$-module $M$, denoted by $\operatorname{Gfd}_R M$,
is the minimum length of a resolution of $M$ whose terms are Gorenstein flat modules. If $M$ admits no finite Gorenstein flat resolution, then $\operatorname{Gfd}_R M=\infty$. 

By \cite[Corollary~4.12]{SS20}, every ring is GF-closed (the class $\tt GFlat(R)$ is closed under extensions). Consequently, the following fundamental criterion of Bennis holds, and it will be used throughout the paper:

\begin{proposition}[{\cite[Theorem~2.8]{Ben09}}]\label{prop:bennis}
	Let $M$ be an $R$-module with $\Gfd_R M\le n<\infty$. Then, the $n$-th syzygy of any flat resolution of $M$ is Gorenstein flat and
	\[
	\Gfd_R M=\sup\{i:\Tor_i^R(I,M)\neq 0\ \text{for some injective right }R\text{-module }I\}.
	\]
	In particular, if $M$ is Gorenstein flat, then $\Tor_i^R(I,M)=0$ for all $i\ge 1$ and all injective right $R$-modules $I$.
\end{proposition}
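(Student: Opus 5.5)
The statement is \cite[Theorem~2.8]{Ben09}, and the plan is to follow Holm's proof of the Gorenstein flat analogue of the Auslander--Buchweitz comparison. The only input needed beyond the definitions is the closure behaviour of $\tt GFlat(R)$ from \cite{SS20}. We use three facts: $\tt GFlat(R)$ contains all flat modules, it is closed under extensions (\cite[Corollary~4.12]{SS20}), and it is closed under kernels of epimorphisms between its members (also proved in \cite{SS20}).

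\emph{Step 1: Tor-vanishing.} Let $G$ be Gorenstein flat, realized as a syzygy of an F-totally acyclic complex $\mathbf{F}$. The part of $\mathbf{F}$ to the left of $G$ is a flat resolution of $G$. Since $I\otimes_R\mathbf{F}$ is exact for every injective right module $I$, we get $\Tor_i^R(I,G)=0$ for all $i\ge1$. This is the ``in particular'' clause. The part of $\mathbf{F}$ to the right of $G$ gives a short exact sequence $0\to G\to F\to G'\to 0$ with $F$ flat and $G'$ Gorenstein flat.

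\emph{Step 2: syzygies.} Fix a Gorenstein flat resolution $0\to G_n\to\cdots\to G_0\to M\to0$ and an arbitrary flat resolution with $n$-th syzygy $K_n$. We compare the two one step at a time by induction on $n$, using pullbacks. Pull back $G_0\to M$ along $F_0\to M$ to obtain a module $P$. This gives exact sequences $0\to K_1\to P\to G_0\to0$ and $0\to L_1\to P\to F_0\to0$, where $L_1=\ker(G_0\to M)$.
\begin{itemize}
\item The second sequence splits up to flat data. Using it together with the closure properties, one shows that $K_1$ and $L_1$ differ by Gorenstein flat, in fact flat, summands and extensions.
\item Concretely: $P$ is Gorenstein flat if and only if $L_1$ is, by extension closure and kernel closure, since $F_0$ is flat.
\item Likewise $K_1$ is Gorenstein flat if and only if $P$ is.
\end{itemize}
Propagating this comparison $n$ times transfers the Gorenstein flatness of $G_n$ to $K_n$. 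Bookkeeping the pullbacks, so that at each stage one only uses extensions and kernels of epimorphisms inside $\tt GFlat(R)$, is the main technical point. If needed, first replace each $G_i$ via the coresolutions from Step~1, so that all terms except the last are flat.

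\emph{Step 3: the formula.} By dimension shifting along the flat resolution, $\Tor_i^R(I,M)\cong\Tor_{i-n}^R(I,K_n)$ for $i>n$. By Steps~1 and~2 this vanishes, so the supremum is at most $\Gfd_RM$.

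For the converse, let $m$ be the supremum and suppose $m<n$. It suffices to show $K_m$ is Gorenstein flat. Dimension shifting gives $\Tor_i^R(I,K_j)=0$ for all $i\ge1$ and $j\ge m$. We prove by descending induction on $j$ that $K_j$ is Gorenstein flat for $m\le j\le n$; the case $j=n$ is Step~2. For the inductive step, consider $0\to K_{j+1}\to F_j\to K_j\to0$ with $K_{j+1}$ Gorenstein flat.
\begin{itemize}
\item Take $0\to K_{j+1}\to F\to G'\to0$ with $F$ flat and $G'$ Gorenstein flat, as in Step~1.
\item Form the pushout of $F_j\leftarrow K_{j+1}\to F$, with pushout module $E$. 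This yields $0\to F_j\to E\to G'\to0$ and $0\to F\to E\to K_j\to0$. The first shows $E$ is Gorenstein flat.
\item Since $\Tor_1^R(I,K_j)=0$ for all injective $I$, the sequence $0\to F\to E\to K_j\to 0$ remains exact after applying $I\otimes_R-$. Splice it with an F-totally acyclic complex through $E$ and with the flat resolution of $F$.
\item The resulting exact complex of flat modules has $K_j$ as a syzygy and stays exact after tensoring with injectives. Hence $K_j$ is Gorenstein flat.
\end{itemize}
The splicing needs care: its correctness rests on $\Tor$-vanishing together with the F-total acyclicity through $E$. Together with the realization of $E$ as a syzygy, this is where I expect the main obstacle. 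A fallback is to invoke the characterization in \cite{SS20} of $\tt GFlat(R)$ as the left-orthogonal class to cotorsion modules in a suitable sense.
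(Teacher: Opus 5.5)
The paper gives no proof of this statement. It quotes \cite[Theorem~2.8]{Ben09}, which is proved for GF-closed rings, and cites \cite[Corollary~4.12]{SS20} for the fact that every ring is GF-closed. Reducing to closure properties of Gorenstein flat modules is therefore the right framework. Your Step~1 and the inequality $\sup\le\Gfd_R M$ are fine once Step~2 is available. However, the two places you defer as ``bookkeeping'' and ``care'' are genuine gaps.

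\emph{Step~2 only works for $n=1$.} The pullback shows that $K_1$ is Gorenstein flat if and only if $L_1$ is. To iterate, you would have to relate $K_1$, with its flat resolution $F_{\ge 1}$, to a Gorenstein flat resolution of length $n-1$. The pullback gives nothing of this kind. The sequence $0\to L_1\to P\to F_0\to 0$ need not split, since $F_0$ is merely flat. Neither flat nor Gorenstein flat modules are projective, so there is no chain map between the two resolutions over $\mathrm{id}_M$. Replacing the $G_i$ by flat modules does not help either, because there is no Schanuel lemma for flat modules.

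The missing device is to compare both resolutions with a projective resolution $\mathbf P\to M$, whose $n$-th syzygy we call $K^P_n$. Chain maps out of $\mathbf P$ over $\mathrm{id}_M$ do exist, and their mapping cones give exact sequences
\[0\to K^P_n\to P_{n-1}\oplus G_n\to P_{n-2}\oplus G_{n-1}\to\cdots\to P_0\oplus G_1\to G_0\to 0,\]
\[0\to K^P_n\to P_{n-1}\oplus K_n\to P_{n-2}\oplus F_{n-1}\to\cdots\to P_0\oplus F_1\to F_0\to 0.\]
In the first sequence, closure under kernels of epimorphisms (applied from the right) shows that $K^P_n$ is Gorenstein flat. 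In the second, the same closure makes the image of $P_{n-1}\oplus K_n$ Gorenstein flat. Extension closure then makes $P_{n-1}\oplus K_n$ Gorenstein flat. Finally, $K_n$ is the kernel of the projection $P_{n-1}\oplus K_n\to P_{n-1}$, so it is Gorenstein flat.

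\emph{The final splice in Step~3 does not yield a complete flat resolution.} The module $E$ is not flat, so gluing $0\to F\to E\to K_j\to 0$ to an F-totally acyclic complex through $E$ does not give a complex of flat modules with $K_j$ as a syzygy. Here is what is missing.
\begin{itemize}
\item Choose $0\to E\to F^0\to E'\to 0$ with $F^0$ flat and $E'$ Gorenstein flat, and set $X=F^0/F$.
\item Then $0\to K_j\to X\to E'\to 0$ is exact and $\fd_R X\le 1$.
\item The vanishing for $K_j$ and $E'$ gives $\Tor_i^R(I,X)=0$ for all $i\ge 1$ and all injective $I$.
\item Every right module $A$ embeds in an injective one, so $\Tor_1^R(A,X)=0$; hence $X$ is flat.
\end{itemize}
Now splice $\cdots\to F_{j+1}\to F_j\to K_j$ with $0\to K_j\to X\to E'\to 0$ and with the right half of a complete flat resolution of $E'$. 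Every syzygy $Z$ of the resulting complex satisfies $\Tor_1^R(I,Z)=0$, so the complex is F-totally acyclic and $K_j$ is Gorenstein flat. Your cotorsion-pair fallback is too vague to replace this argument.
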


We shall also use the class of \emph{projectively coresolved Gorenstein flat modules}, usually abbreviated to PGF-modules. This class of modules was introduced by \v{S}aroch and \v{S}\v{t}ov\'{\i}\v{c}ek in \cite{SS20}. An $R$-module is PGF if it is a cycle in an exact complex $\mathbf{P}$ of projective modules satisfying the additional condition that $I\otimes_R\mathbf{P}$
is exact for every injective right $R$-module $I$.
The class of all PGF $R$-modules is denoted by $\tt PGF(R).$
Since every projective module is flat, one immediately obtains that $\tt PGF(R)\subseteq \tt GFlat(R).$
On the other hand, the inclusion $\tt PGF(R)\subseteq \tt GProj(R)$
is established in \cite[Theorem~4.4]{SS20}. The class $\tt PGF(R)$ enjoys several useful closure properties: it is closed under extensions, arbitrary direct sums, direct summands, and kernels of epimorphisms. Finally, the PGF-dimension of an $R$-module $M$, denoted by $\operatorname{PGF\text{-}dim}_R M$, is defined as the least length of a resolution of $M$ by PGF-modules. As usual, if no such finite resolution exists, we put $\operatorname{PGF\text{-}dim}_R M=\infty.$ We shall also need our result \cite[Lemma~2.1]{KS25}, which states that \emph{every PGF module of finite flat dimension is projective}.

\subsection{Group rings, Gedrich--Gruenberg invariants and Gorenstein global dimensions} Let $k$ be a commutative ring and let $G$ be a group. We write $kG$ for the corresponding group ring. Our main reference for the basic notions and results concerning group cohomology is \cite{Brown}. As noted in the Conventions, $kG\cong(kG)^{\operatorname{op}}$; in particular, every right $kG$-module $M$ may be regarded as a left $kG$-module by defining $g\cdot m=mg^{-1}$ for all $g\in G$ and $m\in M$.

For an arbitrary ring $R$, the homological invariant $\operatorname{spli}(R)$ was introduced by Gedrich and Gruenberg in
\cite{GG}. It is defined by
$\operatorname{spli}(R)=\sup\bigl\{\operatorname{pd}_R I\mid I \text{ is an injective } R\text{-module}\bigr\}$ Thus, $\operatorname{spli}(R)$ measures
the largest possible projective dimension of an injective module.
The invariant $\sfli R$ is defined analogously as the supremum of the flat dimensions of injective (left) $R$-modules. The Gorenstein weak global dimension $\Gwgl R$ of a ring $R$ is defined as $\Gwgl R=\sup\bigl\{\operatorname{Gfd}_R M\mid M \text{ is an } R\text{-module}\bigr\}$. By \cite[Theorem~2.4]{CET21}, the Gorenstein weak global dimension is symmetric and $\Gwgl R<\infty\iff \sfli R<\infty\ \text{and}\ \sfli R^{\mathrm{op}}<\infty$, in which case $\Gwgl R=\sfli R=\sfli R^{\mathrm{op}}$. For $R\cong R^{\mathrm{op}}$ (e.g.\ $R=kG$) the finiteness of $\Gwgl R$ is equivalent to the finiteness of $\sfli R$. We note that $\sfli k\le\wgl k\le\gl k$, so our hypotheses are weaker than those of \cite{ET22}. Finally, $\sfli(kH)\le\sfli(kG)$ for every subgroup $H\le G$, with equality when $[G:H]<\infty$ \cite[Lemma~2.10]{KS25}.

\subsection{Kropholler's hierarchy} (see \cite{Kro93}) Let $\mathfrak{F}$ be the class of finite groups. We define $\mathbf{H}_0\mathfrak{F}:=\mathfrak{F}$ and for every ordinal number $\alpha>0$ we say that a group $G$ belongs to the class $\mathbf{H}_{\alpha}\mathfrak{F}$ if and only if there exists a finite dimensional contractible CW-complex on which $G$ acts such that every isotropy subgroup of the action belongs to $\mathbf{H}_{\beta}\mathfrak{F}$ for some ordinal $\beta < \alpha$. We say that a group $G$ belongs to $\mathbf{H}\mathfrak{F}$ if and 
only if there is an ordinal $\alpha$ such that $G$ belongs to $\mathbf{H}_{\alpha}\mathfrak{F}$. Moreover, we define a group $G$ to be in  $\LHF$ if and only if all finitely generated subgroups of $G$ are in $\mathbf{H}\mathfrak{F}$.

\subsection{(Weak) characteristic modules}
Let $k$ be a commutative ring and $G$ be a group. A \emph{weak characteristic module} for $G$ over $k$ is a $k$-flat $kG$-module $A$ with $\fd_{kG}A<\infty$, admitting a $k$-pure $kG$-linear monomorphism $\iota:k\to A$ (with $k$ the trivial $kG$-module) {\cite[Definition~3.6]{KS25}}. Weak characteristic modules generalize the notion of characteristic modules, i.e. an $k$-projective $kG$-module $A$ with $\textrm{pd}_{kG}A <\infty$, which admits an $k$-split $kG$-linear monomorphism $\iota: k \rightarrow A$ (see \cite[Definition~1.1]{ET22}, \cite[Remark~3.9]{KS25}).


\subsection{Basic finiteness properties}\label{subsec:properties}

We collect here the basic finiteness properties of the Gorenstein homological dimension of groups:

\begin{proposition}\label{prop:properties}
Let $k$ be a commutative ring and $G$ be a group.
\begin{enumerate}[label=(\roman*)]
\item If $\sfli k<\infty$ and $\Ghd_kG<\infty$, then every $kG$-module has finite Gorenstein flat dimension (\cite[Theorem~3.14]{KS25}).
\item If $\sfli k<\infty$ and $H\le G$, then $\Ghd_kH\le\Ghd_kG$ (\cite[Proposition~4.1]{RY}).
\item If $\sfli k<\infty$ and $N\trianglelefteq G$ with $Q=G/N$, then $\Ghd_kG\;\le\;\Ghd_kN+\Ghd_kQ$ (\cite[Proposition~4.4]{RY}).
\item $\Ghd_kG\le\Ghd_\Z G$ for every commutative ring $k$ (\cite[Proposition~3.8]{RY}).
\item If $G=\varinjlim_\lambda G_\lambda$ is the directed union of a family of subgroups $\{G_\lambda\}$ of $G$, then $\Ghd_kG\le\sup_\lambda\Ghd_kG_\lambda$ (\cite[Proposition~5.14]{KS25}).
\end{enumerate}
\end{proposition}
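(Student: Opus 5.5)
Proposition~\ref{prop:properties} only collects results from \cite{KS25} and \cite{RY}, so in the paper its proof is just the cited references. If I had to reconstruct the arguments, I would base every item on Proposition~\ref{prop:bennis}. It identifies $\Ghd_kG$ with the supremum of the $i$ for which $\Tor_i^{kG}(I,k)\neq 0$ for some injective $kG$-module $I$, provided $\Ghd_kG$ is already known to be finite. Each item then becomes a statement about vanishing of $\Tor$ against injectives, together with a separate argument that the dimension in question is finite.

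For (i), I would use the finiteness criterion of \cite{KS25}. When $\sfli k<\infty$ and $\Ghd_kG<\infty$ there is a weak characteristic module $A$, with $\iota\colon k\to A$. For an arbitrary $kG$-module $M$, tensoring over $k$ with diagonal action gives a $k$-pure monomorphism $M\to M\otimes_kA$. I would bound $\Gfd_{kG}M$ in terms of $\fd_{kG}A$, $\sfli k$ and the Gorenstein flat dimension of the cokernel. Iterating this embedding and splicing yields an F-totally acyclic-type coresolution after finitely many steps. I expect this to be the main obstacle, since it is where purity and $\sfli k<\infty$ are really used.

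For (ii), I would restrict to $H$, using that $kG$ is free over $kH$. The restriction of an F-totally acyclic complex $\mathbf F$ of flat $kG$-modules stays F-totally acyclic over $kH$, because for an injective $kH$-module $I$ one has $I\otimes_{kH}\mathbf F\cong(I\otimes_{kH}kG)\otimes_{kG}\mathbf F$. The module $I\otimes_{kH}kG$ has finite flat dimension by (i) and the finiteness of $\sfli$, and tensoring an F-totally acyclic complex with a module of finite flat dimension keeps it exact. Restricting a Gorenstein flat resolution of $k$ then gives $\Ghd_kH\le\Ghd_kG$.

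For (iii), I would take an injective $kG$-module $I$ and use the Lyndon--Hochschild--Serre spectral sequence $H_p(Q,H_q(N,I))\Rightarrow H_{p+q}(G,I)$. The first step is to show that $H_q(N,I)=0$ for $q>\Ghd_kN$; this follows from (ii)-type restriction and Proposition~\ref{prop:bennis}. The second step is to show that each $H_q(N,I)$ is a $kQ$-module with vanishing $Q$-homology above $\Ghd_kQ$, which holds because coinduced-type modules retain the needed Tor-vanishing. Together these bound the total degree. For (iv), I would tensor a finite Gorenstein flat $\Z G$-resolution of $\Z$ with $k$; base change $k\otimes_\Z-$ preserves F-total acyclicity, since $\Hom$-adjunction turns injective $kG$-modules into modules to which Proposition~\ref{prop:bennis} applies. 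For (v), I would write $\Tor_i^{kG}(I,k)$ as a direct limit of the groups $\Tor_i^{kG_\lambda}(I,k)$ and apply Proposition~\ref{prop:bennis} to each $G_\lambda$.
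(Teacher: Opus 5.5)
\paragraph{Review.} You are right that the paper gives no argument beyond the citations, so what matters is whether your reconstructions stand on their own. Item (ii) does, using $\sfli(kH)\le\sfli(kG)$, but (iii) and (v) have genuine gaps.

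Proposition~\ref{prop:bennis} turns the vanishing of $\Tor_i^{kG}(I,k)$ for $i>d$ and all injective $I$ into $\Ghd_kG\le d$ only when $\Ghd_kG<\infty$ is already known. You never supply this finiteness.
\begin{itemize}
\item In (v) there is no hypothesis on $k$ at all, so your direct-limit computation of $\Tor_i^{kG}(I,k)$ says nothing about $\Gfd_{kG}k$. Work with modules instead. We have $k\cong\varinjlim_\lambda k[G/G_\lambda]$, and $\Gfd_{kG}k[G/G_\lambda]\le\Ghd_kG_\lambda$ because $\Ind_{G_\lambda}^G$ preserves F-total acyclicity (injective $kG$-modules restrict to injective $kG_\lambda$-modules). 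The class of modules of Gorenstein flat dimension $\le d$ is closed under directed colimits: $d$-th syzygies in functorial free resolutions commute with such colimits, they are Gorenstein flat by Proposition~\ref{prop:bennis}, and Gorenstein flat modules are closed under direct limits.
\item In (iii), besides the same missing finiteness, the vanishing of $H_p(Q,H_q(N,I))$ for $p>\Ghd_kQ$ is only asserted. There is no reason for $H_q(N,I)$ to be injective, or even coinduced, over $kQ$. The restriction of $I$ to $N$ involves products indexed by $Q$, and $H_q(N,-)$ need not commute with these. Even for $N$ of type $\mathrm{FP}_\infty$ and $I=\D(kG^{(\alpha)})$, Corollary~\ref{cor:coind} gives $\Coind_1^Q\D\big(H^q(N,kN)^{(\alpha)}\big)$. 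The paper can show this is injective only when $H^q(N,kN)$ is $k$-flat. That is why the proof of Theorem~\ref{thm:feldman} kills $E^2_{pq}$ for $p>m$ only when $q<n$, under hypothesis (iii), and leaves the ray $q=n$ uncontrolled. So the spectral sequence does not give subadditivity in the stated generality; you need a different argument, such as the one in \cite{RY}.
\end{itemize}

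Your sketch of (i) does not close either. Write $\bar A=\coker\iota$. The cokernel $M\otimes_k\bar A$ of $M\to M\otimes_kA$ is as arbitrary as $M$, so the iteration never terminates. Moreover, $M\otimes_kA$ need not have finite flat dimension when $\fd_kM=\infty$, which $\sfli k<\infty$ allows (e.g.\ $k=\Z/4$, $M=\Z/2$). The missing idea is to use the embedding only for an injective $kG$-module $I$. There it splits, and since $I$ is $k$-injective, $\fd_{kG}I\le\fd_{kG}(I\otimes_kA)\le\sfli k+\fd_{kG}A$. Hence $\sfli(kG)<\infty$, and \cite[Theorem~2.4]{CET21} together with $kG\cong(kG)^{\mathrm{op}}$ gives $\Gwgl(kG)<\infty$.

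In (iv), your appeal to Hom-adjunction hides a real point. An injective $kG$-module need not be $\Z G$-injective (take $k=\mathbb{F}_p$), so Proposition~\ref{prop:bennis} does not apply to it directly. It is, however, a direct summand of $\Hom_{\Z G}(kG,J)\cong\Hom_\Z(k,J)$ for an injective $\Z G$-module $J$. This module has injective dimension at most $1$ over $\Z G$, which suffices for exactness after tensoring. You should also note that $k\otimes_\Z-$ keeps the relevant complexes exact, because all the modules involved are $\Z$-flat.
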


In addition, we have $\Ghd_{\mathbb{Z}}G=0\iff G$ is locally finite (\cite[Corollary~5.3]{KS25}) (whereas $\Gcd_kG=0\iff G$ is finite \cite{ET18}), and the analogue of Serre's theorem: if $\sfli k<\infty$ and $[G:H]<\infty$, then $\Ghd_kG=\Ghd_kH$ (\cite[Theorem~5.12]{KS25}).

\section{Groups of type \texorpdfstring{$\mathrm{FP}_\infty$}{FP-infinity} with finite \texorpdfstring{$\Ghd$}{Ghd}}\label{sec:fp}

In \cite[\S2]{ET22} it is proved that every group of type $\mathrm{FP}_\infty$ with finite $\Gcd$ admits a complete projective resolution consisting of finitely generated free modules. Here, under the hypothesis $\Ghd_kG<\infty$ (which is weaker than the hypothesis $\Gcd_kG<\infty$ by \cite[Theorem~3.10]{KS25}), we construct a complete resolution of finitely generated free modules which is F-totally acyclic. Our construction then yields Theorems A and B.


\begin{proposition}\label{prop:step}
Let $R$ be a ring with $\Gwgl R<\infty$ and let $M$ be a finitely presented Gorenstein flat $R$-module. Then:
\begin{enumerate}[label=(\roman*)]
\item There exists a short exact sequence of $R$-modules $0\to M\to T\to C\to 0$, where $T$ is finitely generated free, $C$ is finitely presented Gorenstein flat, and the sequence remains exact after applying the functor $I\otimes_R-$ for every injective right $R$-module $I$.
\item There exists an exact sequence of $R$-modules
\[
0\to M\to T_{-1}\to T_{-2}\to\cdots,
\]
where the $T_i$ are finitely generated free and the images $C_i=\im(T_i\to T_{i-1})$ are finitely presented Gorenstein flat modules. 
\item Every projective resolution $\mathbf P$ of $M$ 
\[ \cdots \rightarrow P_m \rightarrow P_{m-1}\rightarrow \cdots \rightarrow P_0 \rightarrow M \rightarrow 0\]
can be completed to an F-totally acyclic complex of projective modules
\[
\mathbf T:\ \cdots\rightarrow P_m \rightarrow P_{m-1}\rightarrow \cdots \rightarrow P_0\to T_{-1}\to T_{-2}\to\cdots\] with $M=\im(P_0\to T_{-1})$, where $T_i$ is a finitely generated free $R$-module for every $i\le-1$.
\end{enumerate}
\end{proposition}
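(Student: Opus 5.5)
The plan is to prove (i) directly from the definition of Gorenstein flatness together with the Govorov--Lazard theorem, and then to obtain (ii) and (iii) by iterating (i) and splicing.

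For (i), since $M$ is Gorenstein flat, it is a syzygy in an F-totally acyclic complex of flat modules. In particular there is a short exact sequence $0\to M\xrightarrow{j} F\to M'\to 0$ with $F$ flat and $M'$ Gorenstein flat. Because the complex stays exact after applying $I\otimes_R-$, the map $1_I\otimes j\colon I\otimes_R M\to I\otimes_R F$ is injective for every injective right $R$-module $I$.

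By Govorov--Lazard, $F$ is a direct limit of finitely generated free modules. Since $M$ is finitely presented, $j$ factors as $M\xrightarrow{u}T\xrightarrow{v}F$ with $T$ finitely generated free. Then $u$ is injective because $v u=j$ is. Its cokernel $C=T/M$ is finitely presented, being the cokernel of a map from a finitely presented module to a finitely generated free module. For every injective $I$, the composite $I\otimes M\to I\otimes T\to I\otimes F$ equals $1_I\otimes j$, which is injective. Hence $1_I\otimes u$ is injective, so the sequence $0\to M\to T\to C\to 0$ stays exact after applying $I\otimes_R-$.

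It remains to show that $C$ is Gorenstein flat, and here the hypothesis $\Gwgl R<\infty$ enters. It gives $\Gfd_R C<\infty$, so Proposition \ref{prop:bennis} applies and shows that it suffices to prove $\Tor_i^R(I,C)=0$ for all $i\ge1$ and all injective $I$. For $i=1$ this follows because $\Tor_1^R(I,C)=\ker(1_I\otimes u)=0$, using $\Tor_1^R(I,T)=0$. For $i\ge2$ the long exact sequence gives $\Tor_i^R(I,C)\cong\Tor_{i-1}^R(I,M)$, which vanishes by Proposition \ref{prop:bennis} since $M$ is Gorenstein flat. Hence $C$ is Gorenstein flat. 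This is the key step: without finiteness of $\Gwgl R$ we could not pass from the vanishing of $\Tor$ against injectives to Gorenstein flatness.

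For (ii), apply (i) to $M$, which gives $M\to T_{-1}$ with cokernel $C_{-1}$ finitely presented Gorenstein flat. Then apply (i) again to $C_{-1}$, and continue inductively. Splicing the resulting short exact sequences yields the exact sequence $0\to M\to T_{-1}\to T_{-2}\to\cdots$, whose images $C_i$ are finitely presented Gorenstein flat.

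For (iii), splice the given projective resolution $\mathbf P$ of $M$ with the sequence from (ii) at $M$; this produces an exact complex $\mathbf T$ of projective modules. To check that $\mathbf T$ is F-totally acyclic, fix an injective right $R$-module $I$; it is enough to check exactness of $I\otimes_R\mathbf T$ on each spliced short exact sequence.
\begin{itemize}
\item On the left half, the homology of $I\otimes_R\mathbf P$ is $\Tor_*^R(I,M)$, which vanishes in positive degrees by Proposition \ref{prop:bennis}.
\item At the splicing point, $I\otimes P_1\to I\otimes P_0\to I\otimes M\to0$ is exact by right exactness of the tensor product, and $I\otimes M\to I\otimes T_{-1}$ is injective by (i).
\item On the right half, each short exact sequence $0\to C_i\to T_{i-1}\to C_{i-1}\to0$, with $C_{0}:=M$, stays exact under $I\otimes_R-$ by (i) applied to $C_i$.
\end{itemize}
Hence $I\otimes_R\mathbf T$ is exact, and $\mathbf T$ is the required F-totally acyclic complex of projective modules with $T_i$ finitely generated free for every $i\le -1$.
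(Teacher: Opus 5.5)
Your proof is correct and follows the paper's argument essentially step for step. You factor the embedding $M\to F$ through a finitely generated free module via Govorov--Lazard, use Bennis's $\Tor$-criterion (which is where $\Gwgl R<\infty$ is needed) to see that the cokernel is Gorenstein flat, and then iterate and splice for (ii) and (iii). The only cosmetic difference is in (iii): you check exactness of $I\otimes_R\mathbf T$ on the left half directly from $\Tor_{\ge1}^R(I,M)=0$, whereas the paper invokes the fact that all syzygies of $\mathbf P$ are Gorenstein flat.
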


\begin{proof}
(i) Since $M$ is Gorenstein flat, there exists a complete flat resolution of $R$-modules \[\mathbf F:\cdots \rightarrow F_1 \rightarrow F_0 \rightarrow F_{-1}\rightarrow F_{-2}\rightarrow \cdots\] such that $M=\im(F_1\to F_0)$; set $M'=\im(F_0\to F_{-1})$, so that we have a short exact sequence of $R$-modules $0\to M\xrightarrow{u}F_0\to M'\to 0$, where $F_0$ is flat and $M'$ is Gorenstein flat. Let $I$ be an injective right $R$-module. Since $M'$ is Gorenstein flat, we have $\Tor_1^R(I,M')=0$ and hence the long exact $\Tor$ sequence \[\cdots \rightarrow \Tor_1^R(I,M')\rightarrow I\otimes_R M \xrightarrow{1\otimes u} I \otimes_R F_0\rightarrow I\otimes_R M' \rightarrow 0\] gives that the map $1\otimes u$ is a monomorphism. By the Govorov--Lazard theorem, $F_0=\varinjlim_{j}L_j$ is a directed colimit of finitely generated free $R$-modules. Since $M$ is finitely presented, the canonical map $\varinjlim_j\Hom_R(M,L_j)\to\Hom_R(M,F_0)$ is an isomorphism. Hence, there exist an index $j$ and a map $g:M\to L_j$ such that $u=u_j\circ g$, where $u_j:L_j\to F_0$ is the structural map. Since $u$ is a monomorphism, $g$ is also a monomorphism. Set $T=L_j$ and $C=\coker g$. It follows that $C$ is a finitely presented $R$-module.
Since the composition $I\otimes_R M\xrightarrow{1\otimes g}I\otimes_R T\xrightarrow{1\otimes u_j}I\otimes_R F_0$ equals the monomorphism $1\otimes u$, it follows that $1\otimes g$ is also a monomorphism. We consider now the long exact $\Tor$ sequence of the short exact sequence $0\rightarrow M\xrightarrow{g} T\rightarrow C\rightarrow 0$: 
\begin{equation}\label{tor}
\cdots \rightarrow \Tor_1^R(I,T)\rightarrow \Tor_1^R(I,C)\rightarrow I\otimes_R M \xrightarrow{1\otimes g} I \otimes_R T\rightarrow I\otimes_R C \rightarrow 0.
\end{equation}
Since the $R$-module $T$ is free, the vanishing of $\Tor_{i\geq 1}^R(I,T)$ yields $\Tor_1^R(I,C)=\ker(1\otimes g)=0$ and $\Tor_i^R(I,C)\cong\Tor_{i-1}^R(I,M)$, for every $i\geq 2$. Moreover, the $R$-module $M$ is Gorenstein flat and hence $\Tor_{i-1}^R(I,M)=0$, for every $i\geq 2$. Consequently, we have $\Tor_i^R(I,C)=0$ for every $i\geq 1$. Since $\Gwgl R<\infty$, the $R$-module $C$ has finite Gorenstein flat dimension and by Proposition \ref{prop:bennis},
$\Gfd_RC=\sup\{i:\Tor_i^R(I,C)\neq0,\ I\ \text{injective}\}=0,$
i.e.\ $C$ is Gorenstein flat. Finally, the exactness of $0\to I\otimes M\xrightarrow{1\otimes g} I\otimes T\to I\otimes C\to0$ follows from the exact sequence \eqref{tor}.

(ii) This follows inductively from (i), splicing the short exact sequences obtained by a repeated application of (i). We note that the module $C$ of each step is finitely presented and Gorenstein flat, so the step can be repeated.

(iii) We splice the projective resolution $\mathbf P$ of $M$ with the exact sequence of (ii). The resulting complex is an acyclic complex of projective modules \[\mathbf T:\ \cdots\rightarrow P_m \rightarrow P_{m-1}\rightarrow \cdots \rightarrow P_0\to T_{-1}\to T_{-2}\to\cdots\] with $M=\im(P_0\to T_{-1})$, such that $T_i$ is a finitely generated free $R$-module for every $i\le-1$. Its syzygies in nonnegative degrees are the syzygies $K_j$ of $\mathbf P$, where $K_0=M$. Since $M$ is Gorenstein flat, invoking \cite[Corollary~4.12]{SS20} and \cite[Theorem 2.8(4)]{Ben09}, we deduce that $K_j$ is Gorenstein flat for every $j\geq 0$ and hence $\Tor_1^R(I,K_j)=0$. In negative degrees the syzygies of $\mathbf T$ are the Gorenstein flat modules $C_i$ of (ii), and hence $\Tor_1^R(I,C_i)=0$. We denote by $Z_i$, $i\in\mathbb{Z}$, the syzygies of $\mathbf T$. Then, every short exact sequence $0\to Z_i\to T_i\to Z_{i-1}\to0$  remains exact after the application of the functor $I\otimes_R-$, since $\Tor_1^R(I,Z_{i-1})=0$ for every $i\in \mathbb{Z}$. It follows that $\mathbf T$ is F-totally acyclic.
\end{proof}

\begin{theorem}[Theorem A]\label{thm:fpGFPGF}
Let $R$ be a ring with $\Gwgl R<\infty$ (equivalently, $\sfli R<\infty$ and $\sfli R^{\mathrm{op}}<\infty$). Then, every finitely presented Gorenstein flat $R$-module $M$ is PGF, hence also Gorenstein projective. If, moreover, $M$ is of type $\mathrm{FP}_\infty$, the complex $\mathbf T$ of Proposition \ref{prop:step}(iii) can be chosen to consist of finitely generated free modules in \emph{all} degrees.
\end{theorem}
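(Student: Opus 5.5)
The plan is to read Theorem A off directly from Proposition \ref{prop:step}(iii). Let $M$ be a finitely presented Gorenstein flat $R$-module and choose any projective resolution $\mathbf P\to M$, for instance one by free modules. By Proposition \ref{prop:step}(iii), $\mathbf P$ can be completed to an acyclic complex
\[
\mathbf T:\ \cdots\to P_1\to P_0\to T_{-1}\to T_{-2}\to\cdots
\]
of projective modules which is F-totally acyclic, that is, $I\otimes_R\mathbf T$ is exact for every injective right $R$-module $I$, and in which $M=\im(P_0\to T_{-1})$. Thus $M$ is a cycle in an exact complex of projectives that remains exact after tensoring with injectives, which is exactly the definition of a PGF module. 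The inclusion $\tt PGF(R)\subseteq \tt GProj(R)$ of \cite[Theorem~4.4]{SS20} then shows that $M$ is Gorenstein projective.

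For the second assertion, suppose moreover that $M$ is of type $\mathrm{FP}_\infty$. Then $M$ admits a projective resolution $\mathbf P$ in which every $P_m$ is a finitely generated free $R$-module. We feed this particular resolution into Proposition \ref{prop:step}(iii). The negative-degree terms $T_i$, $i\le -1$, produced there are already finitely generated free, so the resulting complex $\mathbf T$ consists of finitely generated free modules in all degrees. F-total acyclicity is part of the conclusion of Proposition \ref{prop:step}(iii), so nothing further needs to be checked.

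There is essentially no obstacle here, since the real work was done in Proposition \ref{prop:step}(i). There the Govorov--Lazard factorization through a finitely generated free module, together with Bennis's criterion (Proposition \ref{prop:bennis}) and the finiteness of $\Gwgl R$, showed that the cokernel is again finitely presented Gorenstein flat, which allows the construction to be iterated. The only point worth stressing is the following. The definition of PGF requires $I\otimes_R-$ exactness of the whole complex, not merely that the syzygies be Gorenstein flat. This is exactly what Proposition \ref{prop:step}(iii) provides, by checking that $\Tor_1^R(I,Z_i)=0$ for every syzygy $Z_i$ of $\mathbf T$.
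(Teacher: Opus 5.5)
Your proposal is correct and follows the paper's proof essentially verbatim. You realize $M$ as a syzygy of the F-totally acyclic complex of projectives from Proposition~\ref{prop:step}(iii), conclude that $M$ is PGF and hence Gorenstein projective by \cite[Theorem~4.4]{SS20}, and obtain the second assertion by feeding a resolution by finitely generated free modules into that proposition.
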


\begin{proof}
By Proposition \ref{prop:step}(iii), $M$ is a syzygy of an acyclic complex of projective modules which remains acyclic after applying $I\otimes_R-$ for every injective $I$. It follows that $M$ is PGF and hence also Gorenstein projective.  If, moreover, $M$ is of type $\mathrm{FP}_\infty$, we can choose the projective resolution $\mathbf P$ to consist of finitely generated free modules. Thus, the complex $\mathbf T$ of Proposition \ref{prop:step}(iii) can be chosen to consist of finitely generated free modules in \emph{all} degrees.
\end{proof}

\begin{corollary}\label{cor:completeres}
Let $k$ be a commutative ring such that $\sfli k<\infty$ and let $G$ be a group such that $\Ghd_kG<\infty$. Consider a $kG$-module $M$ of type $\mathrm{FP}_\infty$ and let $\Gfd_{kG}M=s<\infty$ (by Proposition \ref{prop:properties}(i)). Then, for every resolution $\mathbf P\to M$ by finitely generated free $kG$-modules, there exists an acyclic complex $\mathbf T$ of finitely generated free $kG$-modules in all degrees, which:
\begin{enumerate}[label=(\alph*)]
\item coincides with $\mathbf P$ in degrees $\ge s$,
\item is F-totally acyclic (so that all its syzygies are PGF), and
\item is such that $\Hom_{kG}(\mathbf T,Q)$ is acyclic for every projective $kG$-module $Q$, i.e.\ $\mathbf T$ is a complete projective resolution of coincidence index $s$ in the strong sense of \cite[\S1.I]{ET22}.
\end{enumerate}
In particular, if $G$ is of type $\mathrm{FP}_\infty$ over $k$ with $\Ghd_kG=n<\infty$, the trivial module $k$ admits a complete projective resolution consisting of finitely generated free modules in every degree, of coincidence index $n$, which is moreover F-totally acyclic.
\end{corollary}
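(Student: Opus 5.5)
The plan is to truncate $\mathbf P$ at degree $s$, apply Theorem A to the resulting syzygy, and then verify the three conditions (a)--(c) for the spliced complex.

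\emph{Step 1: the $s$-th syzygy.} Let $\mathbf P\to M$ be a resolution by finitely generated free $kG$-modules and let $K_s=\im(P_s\to P_{s-1})$ be its $s$-th syzygy, with $K_0=M$ if $s=0$. By Proposition~\ref{prop:properties}(i), $\Gfd_{kG}M=s<\infty$, so Proposition~\ref{prop:bennis} shows that $K_s$ is Gorenstein flat. Moreover $K_s$ is of type $\mathrm{FP}_\infty$: the truncation $\cdots\to P_{s+1}\to P_s\to K_s\to 0$ is a resolution of $K_s$ by finitely generated free modules. Since $kG\cong(kG)^{\mathrm{op}}$ and $\sfli(kG)<\infty$ whenever $\Ghd_kG<\infty$ (this follows from Proposition~\ref{prop:properties}(i) and the equivalences recalled in Section~\ref{sec:prelim}), we have $\Gwgl kG<\infty$. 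Hence Theorem~\ref{thm:fpGFPGF}, applied to $K_s$ with this truncated resolution, gives an F-totally acyclic complex $\mathbf T$ of finitely generated free modules in all degrees. By Proposition~\ref{prop:step}(iii), $\mathbf T$ is obtained by splicing $\cdots\to P_{s+1}\to P_s$ with a right completion $K_s\to T_{s-1}\to T_{s-2}\to\cdots$. After reindexing, $\mathbf T$ coincides with $\mathbf P$ in degrees $\ge s$, which is (a), and is F-totally acyclic, which is (b). All syzygies of $\mathbf T$ are therefore PGF by definition.

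\emph{Step 2: $\Hom$-exactness, condition (c).} This is the step I expect to be the main point. The first approach is to use that every syzygy $Z_i$ of $\mathbf T$ is PGF, hence Gorenstein projective by \cite[Theorem~4.4]{SS20}. Gorenstein projective modules satisfy $\Ext^1_{kG}(Z,Q)=0$ for every projective $Q$. So each short exact sequence $0\to Z_i\to T_i\to Z_{i-1}\to 0$ stays exact under $\Hom_{kG}(-,Q)$, and therefore $\Hom_{kG}(\mathbf T,Q)$ is acyclic.

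A more direct alternative, which avoids the external theorem, is to note that $\Ext^{j}_{kG}(Z,Q)$ can be computed as $\Ext^{j+m}_{kG}(Z',Q)$ for a cosyzygy $Z'$ sitting $m$ steps further to the left. One then needs $\Ext^{\gg 0}_{kG}(-,Q)$ to vanish on PGF modules. This would follow from $\pd Q$-type bounds on $\mathrm{spli}$, but that is not available under the $\sfli$ hypothesis alone. So I will rely on the PGF $\subseteq$ GProj inclusion, which is quoted in the preliminaries.

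\emph{Step 3: the group case.} For the final assertion, apply the construction with $M=k$. When $G$ is of type $\mathrm{FP}_\infty$ over $k$, the module $k$ admits such a resolution $\mathbf P$, and $s=\Ghd_kG=n$ by definition. This yields a complete projective resolution of coincidence index $n$ consisting of finitely generated free modules, which is moreover F-totally acyclic.
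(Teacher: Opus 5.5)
Your proposal is correct and follows the paper's proof essentially step for step. You truncate $\mathbf P$ at the Gorenstein flat, finitely presented $s$-th syzygy, apply Theorem~\ref{thm:fpGFPGF} to get (a) and (b), and obtain (c) from the fact that PGF modules are Gorenstein projective, so that $\Ext^1_{kG}(Z,Q)=0$ for every syzygy $Z$ and every projective $Q$. One small point: passing from ``every $kG$-module has finite Gorenstein flat dimension'' to $\Gwgl(kG)<\infty$ does not follow from the equivalences recalled in Section~\ref{sec:prelim} alone. It needs either the direct citation of \cite[Theorem~3.14]{KS25}, as the paper does, or the routine argument applying Proposition~\ref{prop:bennis} to a direct sum of modules with unbounded Gorenstein flat dimension.
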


\begin{proof}
By \cite[Theorem~3.14]{KS25} we have $\sfli(kG)<\infty$ and hence $\Gwgl(kG)<\infty$. Let $K_s$ be the $s$-th syzygy of $\mathbf P$. Then, $K_s$  is of type $\mathrm{FP}_\infty$, hence finitely presented, and Gorenstein flat (Proposition \ref{prop:bennis}). Applying Theorem \ref{thm:fpGFPGF} to $K_s$ with left resolution the truncation $\mathbf P_{\ge s}$, we obtain the desired acyclic complex $\mathbf T$ with the properties (a) and (b). For (c), the syzygies of $\mathbf T$ are PGF and hence Gorenstein projective \cite[Theorem~4.4]{SS20}. Consequently, for every short exact sequence $0\to Z\to T\to Z'\to0$ of syzygies of $\mathbf T$ we have $\Ext^1_{kG}(Z',Q)=0$ for every projective $kG$-module $Q$. We conclude that $\Hom_{kG}(\mathbf T,Q)$ is acyclic for every projective $kG$-module $Q$, as needed.
\end{proof}

\begin{corollary}[Theorem B]\label{cor:star}
Let $k$ be a commutative ring such that $\sfli k<\infty$ and let $G$ be a group of type $\mathrm{FP}_\infty$ over $k$. Then,
\[
\Ghd_kG=\Gcd_kG=\PGFcd_kG.
\]
\end{corollary}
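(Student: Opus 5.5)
We will prove the three equalities by establishing the chain of inequalities
\[
\Ghd_kG\le \Gcd_kG\le \PGFcd_kG\le \Ghd_kG,
\]
with the convention that all three are infinite simultaneously.

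\emph{The first two inequalities.} The inequality $\Ghd_kG\le\Gcd_kG$ is \cite[Theorem~3.10]{KS25}, valid whenever $\sfli k<\infty$. The inequality $\Gcd_kG\le\PGFcd_kG$ holds because every PGF module is Gorenstein projective \cite[Theorem~4.4]{SS20}. Hence a PGF resolution of the trivial module $k$ of length $m$ is also a Gorenstein projective resolution of length $m$.

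\emph{The key inequality $\PGFcd_kG\le\Ghd_kG$.} We may assume $n=\Ghd_kG<\infty$, since otherwise there is nothing to prove. Because $G$ is of type $\mathrm{FP}_\infty$ over $k$, choose a resolution $\mathbf P\to k$ by finitely generated free $kG$-modules. By Corollary \ref{cor:completeres} there is an F-totally acyclic complex $\mathbf T$ of finitely generated free modules that coincides with $\mathbf P$ in degrees $\ge n$, and all its syzygies are PGF. In particular, the $n$-th syzygy $K_n$ of $\mathbf P$ is PGF. Alternatively, one can argue directly: $\Gwgl(kG)<\infty$ by \cite[Theorem~3.14]{KS25}, and $K_n$ is Gorenstein flat by Proposition \ref{prop:bennis} and finitely presented, so Theorem \ref{thm:fpGFPGF} shows that $K_n$ is PGF. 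Consequently
\[
0\to K_n\to P_{n-1}\to\cdots\to P_0\to k\to0
\]
is a PGF resolution of $k$ of length $n$, because projective modules are PGF. This gives $\PGFcd_kG\le n$.

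\emph{The infinite case.} If $\Ghd_kG=\infty$, the chain $\Ghd_kG\le\Gcd_kG\le\PGFcd_kG$ forces all three dimensions to be infinite, so the equality holds there as well.

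\emph{Where the work lies.} The only substantive step is the third inequality, and it has already been carried out in Proposition \ref{prop:step} and Theorem \ref{thm:fpGFPGF}. The one point to check is that the finiteness hypothesis needed there, $\Gwgl(kG)<\infty$, follows from $\sfli k<\infty$ together with $\Ghd_kG<\infty$. This is exactly \cite[Theorem~3.14]{KS25}, combined with the symmetry $kG\cong (kG)^{\mathrm{op}}$ and \cite[Theorem~2.4]{CET21}.
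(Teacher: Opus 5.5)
Your proposal is correct and follows essentially the same route as the paper. Both arguments use the chain $\Ghd_kG\le\Gcd_kG\le\PGFcd_kG$, obtained from \cite[Theorem~3.10]{KS25} and the inclusion of PGF modules in Gorenstein projective modules. Both then show that, when $\Ghd_kG=n<\infty$, the $n$-th syzygy of a finitely generated free resolution is a finitely presented Gorenstein flat $kG$-module with $\Gwgl(kG)<\infty$, hence PGF by Theorem~\ref{thm:fpGFPGF}. Your alternative appeal to Corollary~\ref{cor:completeres} is also fine, since that corollary is itself derived from Theorem~\ref{thm:fpGFPGF}.
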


\begin{proof}
One always has $\Ghd_kG\le\Gcd_kG\le\PGFcd_kG$: the first inequality is \cite[Theorem~3.10]{KS25} and the second follows from the inclusion $\tt PGF(kG)\subseteq\tt GProj(kG)$. Hence, if $\Ghd_kG=\infty$, all are infinite. We suppose now that $\Ghd_kG=n<\infty$. Since $G$ is of type $\mathrm{FP}_\infty$, there exists a resolution $\mathbf P\to k$ consisting of finitely generated free $kG$-modules. Then, the $n$-th syzygy $K_n$ of this resolution is a finitely presented and Gorenstein flat module (see \cite[Corollary~4.12]{SS20} and \cite[Theorem 2.8(4)]{Ben09}). By \cite[Theorem~3.14]{KS25} we have $\sfli(kG)<\infty$ and hence $\Gwgl(kG)<\infty$. In view of Theorem \ref{thm:fpGFPGF} we deduce that $K_n$ is PGF. Consequently, $\PGFcd_kG\le n$ and the chain of inequalities $n=\Ghd_kG\le\Gcd_kG\le\PGFcd_kG\le n$ closes, yielding equalities.
\end{proof}

Let $B(G,k)$ denote the $kG$-module of all functions $f:G\rightarrow k$ whose range is finite. This module is free as a $k$-module and its restriction to $kH$ is free for every finite subgroup $H\leq G$. For every element $\lambda \in k$, the constant function $\iota(\lambda)\in B(G,k)$ with value $\lambda$ is invariant under the action of $G$. The map $\iota: k \rightarrow B(G,k)$ defined in this way is $kG$-linear and $k$-split. Furthermore, the cokernel $\overline{B}(G,k)$ of $\iota$ is $k$-free (see \cite[Lemma~3.3]{CK98} and \cite[Lemma~3.4]{BC}). The module $B(G,k)$ provides a natural candidate for a (weak) characteristic module for an arbitrary group $G$ over a commutative ring $k$.

\begin{corollary}[Theorem C]\label{cor:star2}
	Let $k$ be a commutative ring such that $\spli k<\infty$ and let $G$ be an $\LHF$-group of type $\mathrm{FP}_\infty$ over $k$. Then,
	\[
	\Ghd_kG=\Gcd_kG=\PGFcd_kG=\fd_{kG}B(G,k)=\pd_{kG}B(G,k)<\infty.
	\]
\end{corollary}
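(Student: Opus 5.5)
Since $\spli k<\infty$ implies $\sfli k<\infty$ (flat dimension is bounded by projective dimension), Corollary \ref{cor:star} gives $\Ghd_kG=\Gcd_kG=\PGFcd_kG$ at once. What remains is to link these to $B(G,k)$ and to prove finiteness. For finiteness I will use the known result for $\LHF$-groups of type $\mathrm{FP}_\infty$ over a commutative ring with $\spli k<\infty$ (Kropholler's argument as adapted in \cite{ET22} and \cite{CK98}). It gives that $\pd_{kG}B(G,k)<\infty$. Since $B(G,k)$ is $k$-free and the inclusion $\iota\colon k\to B(G,k)$ is $k$-split with $k$-free cokernel, $B(G,k)$ is a characteristic module. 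By \cite[Theorem~1.7]{ET22}, or the analogous criterion in \cite{KS25}, this yields $\Gcd_kG<\infty$. Hence all three Gorenstein dimensions are finite and equal to some $n$.

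Next I compare the dimensions of $B(G,k)$ with $n$. We always have $\fd_{kG}B\le\pd_{kG}B$. I would then establish the inequality $\pd_{kG}B\le\Gcd_kG$. The argument: $B$ is $k$-free and its restriction to every finite subgroup is free, so for $\LHF$-groups the module $B$ has finite projective dimension. Tensoring a Gorenstein projective resolution of $k$ of length $n$ with $B$ (diagonal action) gives a resolution of $B$ by modules $N\otimes_k B$ with $N$ Gorenstein projective. These modules are Gorenstein projective and of finite projective dimension, hence projective. Therefore $\pd B\le n$. This is the standard argument in \cite[\S2]{ET22}; I expect it to be citable.

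For the reverse inequality I use the fact that $B$ is a weak characteristic module: $\iota$ is $k$-pure, $B$ is $k$-flat, and $\fd_{kG}B<\infty$. The homological counterpart in \cite{KS25}, namely the weak characteristic module criterion, gives $\Ghd_kG\le\fd_{kG}B$. Concretely, $k\to B$ is a $k$-pure monomorphism with $k$-flat cokernel. Iterating, $k$ embeds into the complex $B\to \overline B\otimes B\to\cdots$, and Bennis's criterion (Proposition \ref{prop:bennis}) bounds $\Gfd k$ by $\fd B$. This gives the chain
\[
n=\Ghd_kG\le\fd_{kG}B(G,k)\le\pd_{kG}B(G,k)\le\Gcd_kG=n,
\]
which closes and proves all the equalities.

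The main obstacle is the step $\Ghd_kG\le\fd_{kG}B(G,k)$. To avoid it, I would first try citing \cite[Theorem~3.8]{KS25} directly, which states $\Ghd_kG\le\fd_{kG}A$ for any weak characteristic module $A$. If that citation does not fit exactly, I would check by hand the vanishing $\Tor_i(I,k)=0$ for $i>\fd B$ and injective $I$, using the $k$-split sequences $0\to k\to B\to\overline B\to0$ and induction on tensor powers $\overline B^{\otimes j}$.
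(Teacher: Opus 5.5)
Your sandwich $\Ghd_kG\le\fd_{kG}B\le\pd_{kG}B\le\Gcd_kG=\Ghd_kG$ (with $B=B(G,k)$) takes a genuinely different route from the paper, but one link is not justified as you wrote it.

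\textbf{Comparison with the paper.} The paper imports two equalities from earlier work and glues them with Corollary~\ref{cor:star}:
\begin{itemize}
\item[(a)] $\Gcd_kG=\PGFcd_kG=\pd_{kG}B<\infty$, from \cite[Theorem~7.6]{St24};
\item[(b)] $\Ghd_kG=\fd_{kG}B<\infty$, from \cite[Corollary~6.11]{St24b}.
\end{itemize}
You import only cohomological information about $B$ and replace (b) by the weak-characteristic-module inequality. Corollary~\ref{cor:star} then itself forces $\fd_{kG}B=\pd_{kG}B$. This uses less external input, at the price of having to prove $\pd_{kG}B\le\Gcd_kG$.

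The step you single out as the main obstacle is actually the easy one. For an injective $I$, the map $x\mapsto x\otimes\iota(1)$ is a $kG$-monomorphism $I\to I\otimes_kB$ out of an injective module, hence split. So $\Tor_i^{kG}(I,k)$ is a direct summand of $\Tor_i^{kG}(I\otimes_kB,k)\cong\Tor_i^{kG}(I,B)$, which vanishes for $i>\fd_{kG}B$. Proposition~\ref{prop:bennis} applies because you already know $\Ghd_kG<\infty$, and gives $\Ghd_kG\le\fd_{kG}B$ with no induction on tensor powers.

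\textbf{The weak link is $\pd_{kG}B\le\Gcd_kG$.}

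First, your remark that $B$ has finite projective dimension for $\LHF$-groups is false without $\mathrm{FP}_\infty$. For $G=\Z^{(\mathbb N)}$, which contains $\Z^m$ for every $m$, one has $\Gcd_\Z G=\infty$, hence $\pd_{\Z G}B(G,\Z)=\infty$. This is harmless only because you had already imported $\pd_{kG}B<\infty$.

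Second, for an arbitrary Gorenstein projective resolution, the terms $N\otimes_kB$ need not have finite projective dimension. Under $\spli k<\infty$ alone, $N$ need not be $k$-projective. For example, take $k=\Z/4$ and $N=\Ind_1^G(\Z/2)$: then $N\otimes_kB$ is a sum of copies of $\Z/2$ as a $k$-module, so $\pd_{kG}(N\otimes_kB)=\infty$. You also assert the Gorenstein projectivity of $N\otimes_kB$ without argument: $\mathbf T\otimes_kB$ is an acyclic complex of projectives, but its total acyclicity is not evident.

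\textbf{A repair using tools from the paper.}
\begin{itemize}
\item[(1)] Since $\sfli(kG)<\infty$ by \cite[Theorem~3.14]{KS25}, every acyclic complex of projective $kG$-modules stays exact after tensoring with an injective module, which has finite flat dimension. So all its syzygies are PGF.
\item[(2)] Let $K_n$ be the $n$-th syzygy of a projective resolution $\mathbf P\to k$. It is Gorenstein projective, and it is $k$-projective because the truncated resolution is $k$-split.
\item[(3)] Let $\mathbf T$ be an acyclic complex of projectives with syzygy $K_n$. Then $K_n\otimes_kB$ is a syzygy of the acyclic complex of projectives $\mathbf T\otimes_kB$, hence PGF by (1).
\item[(4)] Since $K_n$ is $k$-projective, $\fd_{kG}(K_n\otimes_kB)\le\fd_{kG}B<\infty$. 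So $K_n\otimes_kB$ is projective by \cite[Lemma~2.1]{KS25}.
\item[(5)] $K_n\otimes_kB$ is the $n$-th syzygy of the projective resolution $\mathbf P\otimes_kB$ of $B$, so $\pd_{kG}B\le n$.
\end{itemize}
Alternatively, cite \cite[Theorem~7.6]{St24} for this inequality, as the paper does.
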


\begin{proof}The finiteness of $\spli k$ yields the finiteness of $\sfli k$ and hence \cite[Theorem~7.6]{St24} and \cite[Corollary~6.11]{St24b} yield $\Gcd_kG=\PGFcd_kG=\pd_{kG}B(G,k)<\infty$ and $\Ghd_kG=\fd_{kG}B(G,k)<\infty$, respectively. Using Corollary \ref{cor:star} we deduce that $\Ghd_kG=\Gcd_kG=\PGFcd_kG=\fd_{kG}B(G,k)=\pd_{kG}B(G,k)<\infty$, as needed.
\end{proof}

\begin{corollary}\label{cor:star3}
	Let $G$ be an $\LHF$-group of type $\mathrm{FP}_\infty$ over $\mathbb{Z}$. Then,
	\[
	\Ghd_\mathbb{Z}G=\Gcd_\mathbb{Z}G=\PGFcd_\mathbb{Z}G=\fd_{\mathbb{Z}G}B(G,\mathbb{Z})=\pd_{\mathbb{Z}G}B(G,\mathbb{Z})<\infty.
	\]
\end{corollary}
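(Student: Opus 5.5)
This is the special case $k=\Z$ of Corollary~\ref{cor:star2}, so the plan is to check that $\Z$ meets the hypothesis $\spli k<\infty$ there and then read off the conclusion.

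For the hypothesis: every injective $\Z$-module is divisible, and $\Z$ is a principal ideal domain, so every $\Z$-module has a free resolution of length at most one. Hence $\pd_\Z I\le 1$ for every injective $\Z$-module $I$, which gives $\spli\Z\le\gl\Z=1<\infty$. In fact $\spli\Z=1$, since $\Q$ is injective but not projective; only finiteness is needed.

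With this, Corollary~\ref{cor:star2} applies directly to an $\LHF$-group $G$ of type $\mathrm{FP}_\infty$ over $\Z$. It yields
\[
\Ghd_\Z G=\Gcd_\Z G=\PGFcd_\Z G=\fd_{\Z G}B(G,\Z)=\pd_{\Z G}B(G,\Z)<\infty.
\]

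The two inputs behind Corollary~\ref{cor:star2}, namely \cite[Theorem~7.6]{St24} and \cite[Corollary~6.11]{St24b}, are stated over commutative rings with $\spli k<\infty$ (respectively, $\sfli k<\infty$). No additional hypothesis on $\Z$ is needed beyond $\sfli\Z\le\spli\Z<\infty$, which follows from the previous step.
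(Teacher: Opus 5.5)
Your proposal is correct and matches the paper, which states this corollary without proof as the special case $k=\Z$ of Corollary~\ref{cor:star2}. The only hypothesis to check is $\spli\Z\le\gl\Z=1<\infty$, and you verify it.
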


\section{Characteristic modules for type \texorpdfstring{$\mathrm{FP}_\infty$}{FP-infinity} groups with finite \texorpdfstring{$\Ghd$}{Ghd}}\label{sec:char}

Characteristic modules, introduced in \cite{ET22}, are the main computational device for the Gorenstein cohomological dimension: for every characteristic module $A$ for $G$ over $k$ one has $\Gcd_kG\le\pd_{kG}A$ (see \cite[Proposition~1.2, Corollary~1.3]{ET22}). In this section we use the complete resolutions of Section \ref{sec:fp} to construct, for every group of type $\mathrm{FP}_\infty$ with $\Ghd_kG<\infty$ over a commutative ring $k$ with $\sfli k<\infty$, a characteristic module of type FP whose projective dimension equals $\Ghd_kG=\Gcd_kG$ (Theorem \ref{thm:charFP}). This result strengthens \cite[Theorem~3.1]{ET22}, as we explain in the remark following the proof. The finiteness properties of this module make it well suited to base change. As a consequence, we obtain Theorem F: the Gorenstein homological dimension of a group of type $\mathrm{FP}_\infty$ over a principal ideal domain is detected after passage to a suitable field.

\begin{theorem}\label{thm:charFP}
Let $k$ be a commutative ring such that $\sfli k<\infty$ and let $G$ be a group of type $\mathrm{FP}_\infty$ over $k$ with $\Ghd_kG=n<\infty$. Then, there exists a characteristic module $A$ for $G$ over $k$ of type FP such that
\[
\pd_{kG}A=\Gcd_kG=\Ghd_kG=n.
\]
\end{theorem}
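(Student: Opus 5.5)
We follow the construction of \cite[Theorem~3.1]{ET22}, now fed by the complete resolution of Corollary \ref{cor:completeres}. Corollary \ref{cor:star} gives $\Gcd_kG=\Ghd_kG=n$, so it remains to build $A$. Corollary \ref{cor:completeres} provides an F-totally acyclic complex $\mathbf T$ of finitely generated free $kG$-modules. It agrees with a resolution $\mathbf P\to k$ by finitely generated free modules in degrees $\ge n$, and $\Hom_{kG}(\mathbf T,Q)$ is acyclic for every projective $Q$. Write $K_n=\im(P_n\to P_{n-1})$. Truncating $\mathbf T$ at degree $n$ gives a short exact sequence
\[
0\to K_n\to T_{n-1}\to \cdots\to T_0\to Z\to 0,\qquad Z=\im(T_0\to T_{-1}),
\]
and $Z$ is PGF, hence Gorenstein projective.

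The first step is to compare this with $\mathbf P$. Since $Z$ embeds in $T_{-1}$ and the syzygies of $\mathbf T$ are Gorenstein projective, the standard comparison argument gives a chain map from the truncated resolution $0\to K_n\to P_{n-1}\to\cdots\to P_0\to k\to 0$ to $0\to K_n\to T_{n-1}\to\cdots\to T_0\to Z\to 0$ that is the identity on $K_n$. Concretely, $\Ext^{i}_{kG}(K_j,T)=0$ for $i\ge1$ because each $K_j$ with $j\ge n$ is Gorenstein projective and each $T$ is free, so lifting along these terms causes no obstruction. We then set $A$ to be the pushout, i.e. the mapping-cone construction, which yields an exact sequence
\[
0\to k\xrightarrow{\iota} A\to \overline{A}\to0 .
\]
Here $A$ is built out of $T_0$ and the $P_i$ in such a way that $A$ has a finite resolution $0\to P_{n-1}'\to\cdots\to A$ by finitely generated free modules of length $n$. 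This is exactly the ET22 recipe: $A$ is the cokernel of $P_{n}\to T_{n-1}\oplus P_{n-1}$, suitably spliced. In particular $A$ is of type FP and $\pd_{kG}A\le n$.

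Next we check that $A$ is a characteristic module. $\iota$ is injective and $k$-split because $\overline A$ is a finite extension of syzygies of the $k$-split complex $\mathbf T$. Every complete resolution of free $kG$-modules is $k$-split, being contractible over $k$ on restriction to the trivial subgroup, since the syzygies are $k$-projective Gorenstein projective modules over the trivial group. Hence $A$ is $k$-projective. For the lower bound, a characteristic module satisfies $\Gcd_kG\le\pd_{kG}A$ \cite[Corollary~1.3]{ET22}, so $n\le \pd_{kG}A\le n$.

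The main obstacle will be verifying that the constructed $A$ is $k$-projective with $\iota$ $k$-split while keeping the resolution finitely generated. If the splitting argument via the trivial subgroup is delicate, we will instead argue directly: the syzygies of $\mathbf T$ are PGF over $kG$, hence restrict to PGF $k$-modules. Because $\sfli k<\infty$, PGF $k$-modules of finite flat dimension are projective \cite[Lemma~2.1]{KS25}. Flatness over $k$ follows from the F-total acyclicity of $\mathbf T$ together with $\Tor$ vanishing over $k$.
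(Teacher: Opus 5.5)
There is a genuine gap in the construction of $A$: your comparison map runs in the wrong direction. You ask for a chain map from $0\to K_n\to P_{n-1}\to\cdots\to P_0\to k\to0$ to $0\to K_n\to T_{n-1}\to\cdots\to T_0\to Z\to0$ that is the identity on $K_n$, and such a map need not exist. The first obstruction is to extending $K_n\hookrightarrow T_{n-1}$ over $P_{n-1}$. It lies in $\Ext^1_{kG}(K_{n-1},T_{n-1})\cong\Ext^n_{kG}(k,T_{n-1})$, which is a sum of copies of $H^n(G,kG)\neq0$. The vanishing you invoke concerns the $K_j$ with $j\ge n$, and these never occur as obstructions. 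Concretely, take $G=\Z$ and $k=\Z$, so $n=1$. Every Gorenstein projective $\Z G$-module has finite projective dimension and is therefore projective, so $0\to K_1\to T_0\to Z\to0$ splits. Composing your map $P_0\to T_0$ with a retraction $T_0\to K_1$ would then split $0\to K_1\to P_0\to\Z\to0$, making $\Z$ projective, which is false.

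The recipe the paper follows uses the opposite map $\tau:\mathbf T\to\mathbf P$, equal to the identity in degrees $\ge n$. Now the obstructions lie in $\Ext^1_{kG}(Z_j,P_j)=0$, because the syzygies $Z_j$ of $\mathbf T$ are Gorenstein projective and the $P_j$ are projective. After modifying $\tau$ to have surjective components, one pushes out $0\to Z\to T_{-1}\to M\to0$ along the induced map $Z\to k$. This gives $0\to k\xrightarrow{\iota}A\to M\to0$, with $M=\coker(T_0\to T_{-1})$ a syzygy of $\mathbf T$, together with an exact sequence $0\to Q_{n-1}\to\cdots\to Q_0\to T_{-1}\to A\to0$ of finitely generated projectives. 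Your description ``the cokernel of $P_n\to T_{n-1}\oplus P_{n-1}$, suitably spliced'' does not amount to this construction.

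The $k$-splitting step also fails as written. The general claim that every complete resolution by free $kG$-modules is $k$-split is false. For example, take $G=1$ and $k=\Z/4$, which has $\sfli k=0$: the complex $\cdots\to\Z/4\xrightarrow{2}\Z/4\xrightarrow{2}\Z/4\to\cdots$ is F-totally acyclic but not contractible. Justifying the claim by ``the syzygies are $k$-projective'' is circular.

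Your fallback does use the right lemmas: syzygies of $\mathbf T$ restrict to PGF $k$-modules \cite[Lemma~2.1]{St24}, and PGF modules of finite flat dimension are projective \cite[Lemma~2.1]{KS25}. However, F-total acyclicity only yields $\Tor$-vanishing against injective modules, which is exactly what PGF means. It says nothing about finite flat dimension over $k$. The paper gets $\fd_kM<\infty$ from $0\to k\to A\to M\to0$ together with $\fd_kA\le\fd_{kG}A\le\pd_{kG}A\le n$. Alternatively, $K_n$ is $k$-projective because $\mathbf P\to k$ is $k$-split, so every syzygy of $\mathbf T$ has finite projective dimension over $k$. Either way $M$ is $k$-projective, so $\Ext^1_k(M,k)=0$, the sequence splits over $k$, and $A$ is $k$-projective. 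With that in place, your final step $n=\Gcd_kG\le\pd_{kG}A\le n$ is correct.
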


\begin{proof} Let $\mathbf P\to k$ be a resolution by finitely generated free $kG$-modules. By Corollary \ref{cor:completeres} (case $M=k$, $s=n$) there exists a complete projective resolution $\mathbf T$ consisting of finitely generated free modules in every degree, of coincidence index $n$, with $T_i=P_i$ for $i\ge n$, such that $\mathbf T$ is F-totally acyclic and the complex $\Hom_{kG}(\mathbf T,Q)$ is acyclic for every projective $kG$-module $Q$. Then, the construction used in the proof of \cite[Theorem~3.1]{ET22} produces a chain map $\tau:\mathbf T\to\mathbf P$ which is the identity in degrees $\ge n$. The construction then modifies this map so that its components are surjective and forms a pushout, yielding a short exact sequence of $kG$-modules 
	\begin{equation}\label{eq2}
		0\to k\xrightarrow{\ \iota\ }A\to M\to0,\qquad M=\coker(T_0\to T_{-1}),
	\end{equation}
together with an exact sequence of $kG$-modules \[
0\to Q_{n-1}\to\cdots\to Q_0\to T_{-1}\to A\to0,
\] where $T_{-1}$ and the $Q_i$'s are finitely generated and projective. Consequently, the $kG$-module $A$ is of type FP and $\pd_{kG}A\leq n <\infty$. The $kG$-module $M$ is PGF, since it is a syzygy of the F-totally acyclic complex $\mathbf T$. Invoking \cite[Lemma~2.1]{St24}, we infer that $M$ is a PGF $k$-module. Since $\fd_kA\le\fd_{kG}A\le\pd_{kG}A\le n<\infty$, the short exact sequence \eqref{eq2} yields $\fd_kM<\infty$. Consequently, $M$ is a $k$-projective module by \cite[Lemma~2.1]{KS25}. Then, $\Ext^1_k(M,k)=0$, so that the short exact sequence \eqref{eq2} is $k$-split and $A$ is a $k$-projective module. Hence, $A$ is a characteristic module of type FP. Since $A$ is a characteristic module we have $\Gcd_kG\le\pd_{kG}A$ (see \cite[Proposition~1.2, Corollary~1.3]{ET22}). Using the equality $\Gcd_kG=\Ghd_kG=n$ by Corollary \ref{cor:star}, we conclude that $\pd_{kG}A=\Gcd_kG=\Ghd_kG=n$, as needed.
\end{proof}

\begin{remark}
Theorem \ref{thm:charFP} \emph{strengthens} \cite[Theorem~3.1]{ET22} in two ways: the hypothesis on the ring is relaxed from $\wgl k<\infty$ to $\sfli k<\infty$, and the hypothesis on the group from $\Gcd_kG<\infty$ to $\Ghd_kG<\infty$ (which coincide for $\mathrm{FP}_\infty$ groups).
\end{remark}

\begin{theorem}[Theorem F]\label{thm:field-detection}
Let $k$ be a principal ideal domain and let $G$ be a group of type $\mathrm{FP}_\infty$ over $k$ such that $\Ghd_kG<\infty$. Then, there exists a $k$-algebra $F$ which is a field, such that $\Ghd_FG=\Ghd_kG.$
\end{theorem}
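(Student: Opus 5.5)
The strategy is to use the characteristic module $A$ of type FP from Theorem \ref{thm:charFP}, with $\pd_{kG}A=n=\Ghd_kG$, and to detect its projective dimension after base change to a residue field or to the fraction field of $k$.

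First, since $k$ is a PID, $\gl k\le 1$, so $\sfli k<\infty$. By Theorem \ref{thm:charFP} there is a $k$-projective (hence $k$-free) $kG$-module $A$ of type FP with $\pd_{kG}A=n$. Because $A$ is of type FP with $\pd_{kG}A=n$, we can choose a finite resolution $0\to P_n\to\cdots\to P_0\to A\to 0$ by finitely generated projectives. Then $\Ext^n_{kG}(A,kG)\neq 0$. Indeed, for modules of type FP the projective dimension equals $\sup\{i:\Ext^i_{kG}(A,kG)\ne0\}$: nonvanishing of $\Ext^n(A,-)$ on some module $M$ reduces to $M=$ a free module, since $\Ext^n(A,-)$ commutes with direct sums for FP modules and is right exact in top degree. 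Set $E=\Ext^n_{kG}(A,kG)$. This is a finitely presented right $kG$-module, namely the cokernel of $\Hom(P_{n-1},kG)\to\Hom(P_n,kG)$.

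Next, for any $k$-algebra $F$ that is a field, the module $F\otimes_k A$ is a characteristic module for $G$ over $F$. It is $F$-free, the map $F\to F\otimes_kA$ is split because $\iota$ is $k$-split, and $F\otimes_k\mathbf P$ is a resolution of type FP of length $n$, since $A$ is $k$-flat. Hence $\Ghd_FG\le\Gcd_FG\le\pd_{FG}(F\otimes_kA)\le n$. Moreover $G$ is of type $\mathrm{FP}_\infty$ over $F$, so Corollary \ref{cor:star} gives $\Ghd_FG=\Gcd_FG$. Right exactness of $F\otimes_k-$ gives $\Ext^n_{FG}(F\otimes_kA,FG)\cong F\otimes_k E$. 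So it suffices to find $F$ with $F\otimes_kE\neq0$ and to show that this forces $\Gcd_FG=n$ and not merely $\pd(F\otimes A)=n$. For the latter, I would use that for a characteristic module over $F$ of type FP, the \cite{ET22} equality $\Gcd_FG=\pd_{FG}(F\otimes_kA)$ holds. Alternatively, one can argue via complete resolutions: base change the complete resolution $\mathbf T$ of Corollary \ref{cor:completeres} to $F\otimes_k\mathbf T$. This is still a complete resolution of $F$ with coincidence index $n$, and $\Gcd_FG=\sup\{i: H^i(G,FG)\ne0 \}$-type arguments (the ET22 formula $\Gcd=\sup\{i:\Ext^i(k,P)\neq0\}$ for FP$_\infty$ groups) then reduce the claim to showing $H^n(G,FG)\ne0$.

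Choosing $F$ is the main obstacle. The module $E$ is a nonzero $k$-module. If $E$ is not torsion, take $F=\operatorname{Frac}(k)$, so $F\otimes_kE\ne0$. Otherwise $E$ has a nonzero element killed by some prime $p$, and I would take $F=k/p$. However, $k/p\otimes_kE=E/pE$ could vanish if $E$ is $p$-divisible. To handle this, I would work with $H^n(G,kG)$ directly using the universal coefficient sequence for base change of the finite free complex $\Hom(\mathbf P_{\le n+1},kG)$. Over a PID this gives
\[
0\to H^n(G,kG)\otimes_k F\to H^n(G,FG)\to \Tor_1^k(H^{n+1}(G,kG),F)\to 0.
\]
Since $H^{n+1}(G,kG)=0$ (because $\Gcd_kG=n$), we get $H^n(G,FG)\cong H^n(G,kG)\otimes_kF$. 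Finally, $H^n(G,kG)$ is a finitely presented $kG$-module quotient that is nonzero. If it is torsion-free, take $F=\operatorname{Frac}(k)$. If it has $p$-torsion, use the degree $n-1$ sequence instead: $\Tor_1^k(H^n(G,kG),k/p)\ne0$ embeds in $H^{n-1}(G,(k/p)G)$, which does not immediately help. So in that case I would instead choose $p$ with $H^n(G,kG)\ne pH^n(G,kG)$, using finite generation over $kG$ and a Nakayama-type argument. This last step is where I expect the real difficulty.
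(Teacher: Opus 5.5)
Your outline is essentially the alternative argument the paper sketches in the remark after Theorem~\ref{thm:field-detection}: the characteristic module $A$ of type FP from Theorem~\ref{thm:charFP}, base change to $A\otimes_kF$, the equality $\Gcd_FG=\pd_{FG}(A\otimes_kF)$ from \cite{ET22}, and then Corollary~\ref{cor:star} over $F$. The paper's main proof is shorter still. It applies Corollary~\ref{cor:star} over $k$ and over $F$, and takes $F$ from \cite[Theorem~3.6]{ET22}, which gives $\Gcd_FG=\Gcd_kG$. Your reductions are sound: $E=\Ext^n_{kG}(A,kG)\neq0$, $E$ is a finitely generated right $kG$-module, and $\Ext^n_{FG}(F\otimes_kA,FG)\cong F\otimes_kE$. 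However, you never establish the one step that carries the real content, namely producing a field $F$ with $F\otimes_kE\neq0$; this is what \cite[Proposition~3.5]{ET22} supplies. You end with a ``Nakayama-type argument'' and flag it as the difficulty, so the proof is incomplete. The universal-coefficient detour does not help, because it leads to the same question for another finitely generated module.

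The missing idea is elementary, and Nakayama is the wrong tool: $p$ is not in the Jacobson radical of $kG$. Finitely generated modules can be $p$-divisible, e.g.\ $\Z[t^{\pm1}]/(pt-1)\cong\Z[1/p]$, though only in the non-torsion case. What you need is that $k$ is central in $kG$. Hence a finitely generated $kG$-module $E$ that is $k$-torsion is killed by a single nonzero $m\in k$, the product of annihilators of finitely many generators. Write $m=u\,p_1^{e_1}\cdots p_r^{e_r}$. If $E=p_iE$ for every $i$, then $E=mE=0$; so a nonzero torsion $E$ has $E/pE\neq0$ for some prime $p$, and $F=k/(p)$ works. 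If $E$ is not torsion, take $F=\operatorname{Frac}(k)$.

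With this, your first attempt already closes:
\begin{itemize}
\item $F\otimes_kE\neq0$ gives $\pd_{FG}(F\otimes_kA)=n$;
\item hence $\Gcd_FG=n$ by \cite{ET22};
\item hence $\Ghd_FG=n$ by Corollary~\ref{cor:star}, since $\sfli F=0$ and $G$ is $\mathrm{FP}_\infty$ over $F$.
\end{itemize}
Your worry about $p$-divisibility was also unfounded for the prime you first picked. If $E$ is bounded torsion with nonzero $p$-torsion, its $p$-primary part $E_{(p)}$ is nonzero and killed by a power of $p$. Therefore $E/pE\cong E_{(p)}/pE_{(p)}\neq0$.
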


\begin{proof}
Since $k$ is a principal ideal domain, we have $\sfli k\le\gl k\le1<\infty$. Then, Corollary \ref{cor:star} yields $\Gcd_kG=\Ghd_kG<\infty$. By \cite[Theorem~3.6]{ET22} there exists a $k$-algebra $F$ which is a field such that $\Gcd_FG=\Gcd_kG$. The group $G$ is of type $\mathrm{FP}_\infty$ over $F$ as well (see \cite[\S1.II]{ET22}) and $\sfli F=0$. Hence Corollary \ref{cor:star} yields again $\Ghd_FG=\Gcd_FG$. It follows that $\Ghd_FG=\Gcd_FG=\Gcd_kG=\Ghd_kG.$
\end{proof}

\begin{remark}
An alternative proof of Theorem \ref{thm:field-detection} may be given as follows: Theorem \ref{thm:charFP} yields a characteristic module $A$ of type FP such that $\pd_{kG}A=\Ghd_kG$. Using \cite[Proposition~3.5]{ET22} we can find a $k$-algebra $F$ which is a field such that $\pd_{FG}(A\otimes_kF)=\pd_{kG}A$. Since the module $A\otimes_kF$ is a characteristic module for $G$ over $F$, we have $\Gcd_FG=\pd_{kG}A$ as in the proof of \cite[Theorem~3.6]{ET22} and the result follows from Corollary \ref{cor:star} as before. 
\end{remark}

\begin{corollary}\label{thmE} Let $G$ be a group of type $\mathrm{FP}_\infty$ over $\mathbb{Z}$ such that $\Ghd_\mathbb{Z}G<\infty$. Then, there exists a field $F$ such that $\Ghd_FG=\Ghd_\mathbb{Z}G.$
\end{corollary}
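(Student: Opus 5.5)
This is the special case $k=\Z$ of Theorem \ref{thm:field-detection}, so the plan is simply to check that its hypotheses hold and then unwind the conclusion. The ring $\Z$ is a principal ideal domain; in particular $\gl\Z=1$, so $\sfli\Z\le 1<\infty$. The group $G$ is assumed to be of type $\mathrm{FP}_\infty$ over $\Z$ with $\Ghd_\Z G<\infty$, which is exactly what Theorem \ref{thm:field-detection} requires.

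Applying Theorem \ref{thm:field-detection} with $k=\Z$ gives a $\Z$-algebra $F$ that is a field and satisfies $\Ghd_FG=\Ghd_\Z G$. Every field is uniquely a $\Z$-algebra, so the requirement that $F$ be a $\Z$-algebra carries no extra content, and $F$ is the field we need.

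If one wants the argument spelled out rather than quoted, it runs in three steps. First, Corollary \ref{cor:star} gives $\Ghd_\Z G=\Gcd_\Z G$. Next, \cite[Theorem~3.6]{ET22} supplies a field $F$ with $\Gcd_FG=\Gcd_\Z G$. Finally, $G$ is of type $\mathrm{FP}_\infty$ over $F$ by base change of a finite-type free resolution, and $\sfli F=0$, so Corollary \ref{cor:star} applies again and yields $\Ghd_FG=\Gcd_FG$.

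There is no real obstacle. The only points to check are that $\Z$ meets the hypothesis $\sfli k<\infty$ and that the $\mathrm{FP}_\infty$ property passes to $F$, and both are immediate.
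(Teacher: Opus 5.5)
Your proposal is correct and matches the paper: the corollary is Theorem \ref{thm:field-detection} with $k=\Z$, and your spelled-out version (Corollary \ref{cor:star}, then \cite[Theorem~3.6]{ET22}, then Corollary \ref{cor:star} again over $F$ using $\sfli F=0$ and base change of $\mathrm{FP}_\infty$) is exactly the paper's proof of that theorem. Your check that $\Z$ is a principal ideal domain with $\sfli\Z\le 1$ is the only hypothesis verification needed.
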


\section{The Gorenstein homological analogue of Fel\texorpdfstring{$'$}{'}dman's theorem}\label{sec:feldman}
The classical theorem of Fel$'$dman \cite{Fel71} (see also \cite{Bieri}) computes the cohomological dimension of a group extension $1\to N\to G\to Q\to1$: while the subadditivity $\cd_kG\le\cd_kN+\cd_kQ$ holds in complete generality, Fel$'$dman's theorem upgrades it to the equality $\cd_kG=\cd_kN+\cd_kQ$ under a condition on the top cohomology $H^n(N,kN)$ of the kernel, $n=\cd_kN$. The aim of this section is a Gorenstein homological analogue. 
Throughout this section, $k$ is a commutative ring, $1\to N\to G\to Q\to1$ is a group extension and $\D=\Hom_\Z(-,\Q/\Z)$. For a $kG$-module $W$ we write $H_q(N,W)=\Tor_q^{kN}(W,k)$; these groups carry a natural $kQ$-module structure (the action of $G$ by conjugation, with the elements of $N$ acting trivially; see \cite[III.8, VII.6]{Brown}). This is the structure appearing in the Lyndon--Hochschild--Serre spectral sequence in homology
\begin{equation}\label{eq:LHS}
E^2_{pq}=H_p\big(Q,H_q(N,W)\big)\Longrightarrow H_{p+q}(G,W).
\end{equation}
We recall two basic properties of the Pontryagin dual (see e.g.\ \cite{EJ}): $\D M$ is injective if and only if $M$ is flat; and $\D R$ is an injective cogenerator of the category of $R$-modules, so that every injective $R$-module is a direct summand of a module of the form $\D(R^{(\alpha)})$ for a suitable cardinal $\alpha$. 

The first aim of this section is to prove the natural isomorphism $H_q(N,\D V)\cong\D H^q(N,V)$ for every $kG$-module $V$ (see Lemma \ref{lem:duality}), where the group $N$ is of type $FP_{\infty}$ over $k$, together with its consequences for coinduced modules.

\begin{lemma}\label{lem:conj}
	Let $W$ and $M$ be two $kG$-modules. Given $g\in G$, $f\in\Hom_{kN}(M,W)$, $\varphi\in\D W$ and $x\in M$, we define $g\cdot f$ and $g\varphi$ by letting
	\[
	(g\cdot f)(x)=g\,f(g^{-1}x)
	\qquad\text{and}\qquad
	(g\varphi)(w)=\varphi(g^{-1}w),\ \ w\in W,
	\]
	and we set $g\cdot(\varphi\otimes x)=g\varphi\otimes gx$ on $\D W\otimes_{kN}M$. Then:
	\begin{enumerate}[label=(\roman*)]
		\item These actions endow $\Hom_{kN}(M,W)$ and $\D W\otimes_{kN}M$ with the structure of a $kG$-module, which is natural in $M$.
		\item The elements of $N$ act trivially, so that we actually obtain an action of the quotient group $Q$ on both modules, natural in $M$.
		\item The evaluation map
		\[
		\theta_M:\ \D W\otimes_{kN}M\longrightarrow\D\Hom_{kN}(M,W),
		\]
		given by $\theta_M(\varphi\otimes x)(f)=\varphi\big(f(x)\big)$ is $kQ$-linear, where $\D\Hom_{kN}(M,W)$ is endowed with the natural contragredient action given by $(g\cdot\psi)(f)=\psi(g^{-1}\cdot f)$.
	\end{enumerate}
\end{lemma}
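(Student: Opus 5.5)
This is a direct verification, carried out in the order (i), (ii), (iii). We check that the formulas are well defined on the relevant $kN$-balanced or $kN$-linear objects, that they are actions, and that the evaluation map respects them.

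For (i), first take $f\in\Hom_{kN}(M,W)$ and $g\in G$. We check that $g\cdot f$ is again $kN$-linear. For $h\in N$,
\[
(g\cdot f)(hx)=g f(g^{-1}hx)=g f\big((g^{-1}hg)g^{-1}x\big)=g(g^{-1}hg)f(g^{-1}x)=h\,(g\cdot f)(x).
\]
This uses normality of $N$, since $g^{-1}hg\in N$. The action axioms $1\cdot f=f$ and $g\cdot(g'\cdot f)=(gg')\cdot f$ are immediate, as is $k$-linearity.

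On $\D W\otimes_{kN}M$ the right $kN$-structure on $\D W$ is $\varphi h=h^{-1}\varphi$, which is the convention for converting left modules to right ones. We must check that $(\varphi,x)\mapsto g\varphi\otimes gx$ is $kN$-balanced. For $h\in N$,
\[
g(\varphi h)\otimes gx=g h^{-1}\varphi\otimes gx=(gh^{-1}g^{-1})g\varphi\otimes gx=(g\varphi)(ghg^{-1})\otimes gx=g\varphi\otimes ghx.
\]
Again this uses that $ghg^{-1}\in N$. Naturality in $M$ is clear: for a $kG$-map $\alpha\colon M\to M'$, both $\alpha^*$ and $1\otimes\alpha$ commute with the formulas.

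For (ii), let $h\in N$. Then $(h\cdot f)(x)=hf(h^{-1}x)=f(x)$ because $f$ is $kN$-linear. Similarly,
\[
h\varphi\otimes hx=\varphi h^{-1}\otimes hx=\varphi\otimes h^{-1}hx=\varphi\otimes x.
\]
So both actions factor through $Q=G/N$.

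For (iii), $\theta_M$ is well defined because it is balanced:
\[
\theta(\varphi h\otimes x)(f)=\varphi(hf(x))=\varphi(f(hx))=\theta(\varphi\otimes hx)(f).
\]
It remains to check equivariance, comparing both sides at $f$. On one side,
\[
\theta(g\varphi\otimes gx)(f)=(g\varphi)(f(gx))=\varphi(g^{-1}f(gx)).
\]
On the other side,
\[
(g\cdot\theta(\varphi\otimes x))(f)=\theta(\varphi\otimes x)(g^{-1}\cdot f)=\varphi\big((g^{-1}\cdot f)(x)\big)=\varphi(g^{-1}f(gx)).
\]
These agree, so $\theta_M$ is $kG$-linear and hence $kQ$-linear. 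It is also natural in $M$.

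The main point of care is the sign and inverse conventions in the right-module structure on $\D W$. I would fix $\varphi h:=h^{-1}\varphi$ explicitly at the outset, so that the balancing computations in (i) and (iii) go through as written.
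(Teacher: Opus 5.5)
Your proposal is correct and is essentially the paper's proof: a direct verification that uses normality of $N$ for the $kN$-linearity of $g\cdot f$ and for the balancedness of $g\varphi\otimes gx$, with the same right $kN$-structure on $\D W$ (your $\varphi h=h^{-1}\varphi$ is exactly the paper's $(\varphi a)(w)=\varphi(aw)$), the same triviality check for $N$, and the same equivariance computation for $\theta_M$. Your additional checks, namely the action axioms and the well-definedness of $\theta_M$, are routine points that the paper leaves implicit here; the well-definedness of $\theta_M$ is verified in the subsequent lemma.
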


\begin{proof}
	(i) Since $N$ is normal in $G$, it follows that the map $g\cdot f$ is $kN$-linear. Indeed, for $a\in N$ and $x\in M$ we have
	$(g\cdot f)(ax)=g\,f(g^{-1}ax)=g\,f\big((g^{-1}ag)\,g^{-1}x\big)=g\,(g^{-1}ag)\,f(g^{-1}x)=a\,(g\cdot f)(x)$, since
	the map $f$ is $kN$-linear and $g^{-1}ag$ belongs to $N$. On the tensor product, we regard $\D W$ as a right $kN$-module with action $(\varphi a)(w)=\varphi(aw)$, $w\in W$. Then, for every $a\in N$ we have $g\varphi a=\big(w\mapsto\varphi(a\,g^{-1}w)\big)=\big(w\mapsto\varphi(g^{-1}(gag^{-1})w)\big)=(g\varphi)\,(gag^{-1})$. Consequently, $g\cdot(\varphi a\otimes x)=(g\varphi)(gag^{-1})\otimes gx=g\varphi\otimes(gag^{-1})gx=g\cdot(\varphi\otimes ax)$ and hence the action on $\D W\otimes_{kN}M$ is well defined. The naturality in $M$ follows from the definitions.
	
	(ii) For $a\in N$, the $kN$-linearity of $f$ gives $(a\cdot f)(x)=a\,f(a^{-1}x)=a\,a^{-1}f(x)=f(x)$, $x\in M$. Moreover, $a\varphi=\varphi\,a^{-1}$ and hence $a\cdot(\varphi\otimes x)=\varphi a^{-1}\otimes ax=\varphi\otimes x$. As the action of the elements of $N$ is trivial, we obtain an action of the quotient group $Q$.
	
	(iii) For any $g\in G$, $\varphi\in\D W$ and $x\in M$ we have 
	\begin{align*}
		\theta_M\big(g\varphi\otimes gx\big)(f)&=(g\varphi)\big(f(gx)\big)=\varphi\big(g^{-1}f(gx)\big)=\varphi\big((g^{-1}\cdot f)(x)\big)\\
		&=\theta_M(\varphi\otimes x)\big(g^{-1}\cdot f\big)=\big(g\cdot\theta_M(\varphi\otimes x)\big)(f),
	\end{align*}
	as needed.
\end{proof}
	
\begin{lemma}\label{lem:eval}
	Let $V$ be a $kN$-module. For any $kN$-module $L$ there is an additive map
	\[
	\theta_L:\ \D V\otimes_{kN}L\longrightarrow \D\Hom_{kN}(L,V)
	\]
	which is natural in $L$, given by $\theta_L(\varphi\otimes x)(f)=\varphi\big(f(x)\big)$, for all $\varphi\in \D V$, $x\in L$, $f\in\Hom_{kN}(L,V)$. If, moreover, $L$ is a finitely generated projective $kN$-module, then $\theta_L$ is bijective.
\end{lemma}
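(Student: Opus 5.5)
First check that $\theta_L$ is well defined and additive. For fixed $\varphi\in\D V$ and $x\in L$, the assignment $f\mapsto\varphi(f(x))$ is additive in $f$, so it lies in $\D\Hom_{kN}(L,V)=\Hom_\Z(\Hom_{kN}(L,V),\Q/\Z)$. The expression is biadditive in $(\varphi,x)$. It remains to check $kN$-balancedness, using the right $kN$-structure $(\varphi a)(w)=\varphi(aw)$ on $\D V$ as in Lemma \ref{lem:conj}. For $a\in N$ (extended $k$-linearly to $kN$) we have
\[
\theta(\varphi a\otimes x)(f)=\varphi\big(a f(x)\big)=\varphi\big(f(ax)\big)=\theta(\varphi\otimes ax)(f),
\]
by $kN$-linearity of $f$. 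Hence $\theta_L$ descends to the tensor product.

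Naturality in $L$ is a direct check. Let $h:L\to L'$ be $kN$-linear, and let $h^*:\Hom_{kN}(L',V)\to\Hom_{kN}(L,V)$ be precomposition. Then
\[
\theta_{L'}(\varphi\otimes h(x))(f')=\varphi(f'h(x))=\theta_L(\varphi\otimes x)(h^*f'),
\]
that is, $\theta_{L'}\circ(1\otimes h)=\D(h^*)\circ\theta_L$.

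For bijectivity, I would use the standard additivity argument. Both functors $L\mapsto\D V\otimes_{kN}L$ and $L\mapsto\D\Hom_{kN}(L,V)$ are additive and commute with finite direct sums. For the second, note that $\Hom_{kN}(-,V)$ turns finite sums into finite products, and $\D$ turns finite products (which are finite sums) into finite sums. Since $\theta$ is natural, $\theta_{L_1\oplus L_2}$ is identified with $\theta_{L_1}\oplus\theta_{L_2}$ under these decompositions. Consequently, $\theta_{L_1\oplus L_2}$ is bijective if and only if both $\theta_{L_1}$ and $\theta_{L_2}$ are. It therefore suffices to treat $L=kN$, since the case $kN^m$ then follows and a finitely generated projective $L$ is a direct summand of some $kN^m$.

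For $L=kN$ we use the isomorphisms $\D V\otimes_{kN}kN\cong\D V$, $\varphi\otimes x\mapsto\varphi x$, and $\Hom_{kN}(kN,V)\cong V$, $f\mapsto f(1)$. Under these identifications,
\[
\theta_{kN}(\varphi\otimes 1)(f)=\varphi(f(1)),
\]
so $\theta_{kN}$ becomes the identity of $\D V$. The only mildly delicate point in the whole argument is the bookkeeping of the right action on $\D V$ when checking balancedness and this last identification. I expect no genuine obstacle.
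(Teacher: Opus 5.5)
Your proposal is correct and follows essentially the same route as the paper: balancedness via the $kN$-linearity of $f$, naturality from the definition, the identification of $\theta_{kN}$ with the identity of $\D V$, and the reduction to $L=kN$ using the compatibility of $\theta$ with finite direct sums. Your explicit naturality computation and your observation that $\theta_{L_1\oplus L_2}$ is bijective if and only if both summand maps are bijective simply spell out steps the paper leaves to the reader.
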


\begin{proof}
	The map $\theta_L$ is well defined, since $\theta_L(\varphi a\otimes x)(f)=\varphi\big(a\,f(x)\big)=\varphi\big(f(ax)\big)=\theta_L(\varphi\otimes ax)(f)$ for all $a\in N$, the map $f$ being $kN$-linear. Moreover, the naturality in $L$ follows from the definition. It is clear that $\theta_L$ is bijective in the special case where $L=kN$. Indeed, under the canonical identifications $\D V\otimes_{kN}kN\cong\D V$ ($\varphi\otimes a\mapsto\varphi a$) and $\Hom_{kN}(kN,V)\cong V$ ($f\mapsto f(1)$), the map $\theta_{kN}$ corresponds to the identity map of $\D V$. Moreover, if $L,L'$ are two $kN$-modules, then the additive map $\theta_{L\oplus L'}$ is naturally identified with the direct sum $\theta_L\oplus\theta_{L'}$ of the additive maps $\theta_L$ and $\theta_{L'}$. It follows readily that $\theta_L$ is bijective if $L$ is any finitely generated projective $kN$-module.
\end{proof}

We now consider a projective resolution $\mathbf P$ of the trivial $kG$-module $k$
\[
\cdots\longrightarrow P_i\longrightarrow P_{i-1}\longrightarrow\cdots\longrightarrow P_0\longrightarrow k\longrightarrow 0.
\]
Since any projective $kG$-module is $kN$-projective as well, $\mathbf P$ is also a projective resolution of $k$ as a $kN$-module. Hence, we can compute the homology and cohomology groups of $N$ using $\mathbf P$. The naturality of the $kQ$-module structures of Lemma \ref{lem:conj} shows that $\D W\otimes_{kN}\mathbf P$ and $\Hom_{kN}(\mathbf P,W)$ are complexes of $kQ$-modules. In particular, the groups $H_q(N,\D W)$ and $H^q(N,W)$ are $kQ$-modules as well (independent of the choice of $\mathbf P$), and it is precisely this $kQ$-module structure that appears in \eqref{eq:LHS}. Similarly, the naturality of the $kQ$-linear maps $\theta_{P_i}$ of Lemma \ref{lem:conj}(iii) shows that these are the components of a map between chain complexes of $kQ$-modules
\[
\theta=\theta_{\mathbf P}:\ \D W\otimes_{kN}\mathbf P\longrightarrow\D\Hom_{kN}(\mathbf P,W).
\]

\begin{lemma}\label{lem:duality}
	Let $N$ be of type $\mathrm{FP}_\infty$ over $k$, $\mathbf P$ be a projective resolution of the trivial $kG$-module $k$ and $V$ be a $kG$-module. The natural evaluation map
	\[
	\theta:\ \D V\otimes_{kN}\mathbf P\longrightarrow \D\Hom_{kN}(\mathbf P,V),
	\qquad \theta(\varphi\otimes x)(f)=\varphi\big(f(x)\big),
	\]
	is a quasi-isomorphism of complexes. In particular, for every $q\ge0$ there is a natural isomorphism of $kQ$-modules
	\[
	H_q(N,\D V)\;\cong\;\D H^q(N,V).
	\]
\end{lemma}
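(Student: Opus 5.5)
The key difficulty is that $\mathbf P$ is an arbitrary projective resolution of $k$ over $kG$. Its terms need not be finitely generated over $kN$, so Lemma \ref{lem:eval} does not apply to $\theta_{\mathbf P}$ termwise. I would therefore proceed in two stages. First, I treat a resolution $\mathbf L\to k$ of the trivial $kN$-module by finitely generated free $kN$-modules, which exists since $N$ is of type $\mathrm{FP}_\infty$ over $k$. Then I transfer the result to $\mathbf P$ by naturality of $\theta$ in the resolution.

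For $\mathbf L$, Lemma \ref{lem:eval} (applied with $V$ regarded as a $kN$-module) shows that each component $\theta_{L_i}$ is bijective. Naturality of $\theta$ in $L$ makes $\theta_{\mathbf L}$ a chain map, so $\theta_{\mathbf L}$ is an isomorphism of complexes of abelian groups. Next, choose $kN$-chain maps $\alpha:\mathbf P\to\mathbf L$ and $\beta:\mathbf L\to\mathbf P$ over the identity of $k$. Here we view $\mathbf P$ as a $kN$-projective resolution, which is legitimate since projective $kG$-modules are $kN$-projective. These maps are mutually inverse up to chain homotopy. Naturality of $\theta$ with respect to $kN$-maps (Lemma \ref{lem:eval}) gives a commutative square
\[
\theta_{\mathbf L}\circ(1\otimes\alpha)=\D\Hom_{kN}(\alpha,V)\circ\theta_{\mathbf P}.
\]
The functors $\D V\otimes_{kN}-$ and $\D\Hom_{kN}(-,V)$ are additive, so they preserve chain homotopies. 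Hence $1\otimes\alpha$ and $\D\Hom_{kN}(\alpha,V)$ are homotopy equivalences. Since $\theta_{\mathbf L}$ is an isomorphism, $\theta_{\mathbf P}$ is a quasi-isomorphism by two-out-of-three.

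For the isomorphism on homology, recall that $\D$ is exact because $\Q/\Z$ is an injective abelian group. Therefore $H_q(\D C)\cong\D H^{q}(C)$ for a cochain complex $C$, with the indexing handled by regarding $\D$ of a cochain complex as a chain complex. Taking $C=\Hom_{kN}(\mathbf P,V)$ gives
\[
H_q(N,\D V)=H_q(\D V\otimes_{kN}\mathbf P)\cong H_q(\D\Hom_{kN}(\mathbf P,V))\cong\D H^q(N,V).
\]
It remains to check $kQ$-linearity. Here it matters that we used $\mathbf P$ and not $\mathbf L$: the complex $\mathbf L$ carries no $G$-action. By Lemma \ref{lem:conj}(iii), each $\theta_{P_i}$ is $kQ$-linear, and $\theta_{\mathbf P}$ is a map of complexes of $kQ$-modules (as noted before the lemma). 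The induced map on homology is therefore $kQ$-linear. The identification $H_q(\D\Hom)\cong\D H^q$ is also $kQ$-linear, since it is induced by exactness of $\D$ applied to $kQ$-maps.

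Naturality in $V$ follows from the definition of $\theta$. Independence of the choice of $\mathbf P$ follows by the same comparison-map argument, now using $kG$-maps.

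The main obstacle is the reduction from $\mathbf P$ to $\mathbf L$. One must make sure that $\theta$ is natural for $kN$-linear maps between modules that are not $kG$-modules, which is exactly what Lemma \ref{lem:eval} provides. The $kQ$-structure has to be kept on the $\mathbf P$ side only.
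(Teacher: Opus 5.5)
Your proposal is correct and follows essentially the same route as the paper: compare $\mathbf P$ with a resolution by finitely generated projective $kN$-modules via a $kN$-homotopy equivalence, then use the naturality of $\theta$ and its bijectivity on finitely generated projectives (Lemma \ref{lem:eval}). Two-out-of-three and the exactness of $\D$ finish the argument; the only differences are cosmetic, namely that you map $\mathbf P\to\mathbf L$ where the paper uses $u:\mathbf P'\to\mathbf P$, and that you spell out the $kQ$-linearity which the paper records in the paragraph preceding the lemma.
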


\begin{proof} Since the group $N$ is of type $\mathrm{FP}_\infty$ over $k$, there exists a resolution $\mathbf P'$ of the trivial $kN$-module $k$ which consists of finitely generated projective $kN$-modules in each degree. We know that there exists a homotopy equivalence $u:\mathbf P'\to\mathbf P$ in the category of chain complexes of $kN$-modules. Of course, $u$ induces homotopy equivalences $\D V\otimes u$ and $\D\Hom_{kN}(u,V)$, the additive functors $\D V\otimes_{kN}-$ and $\D\Hom_{kN}(-,V)$ preserving chain homotopies. We now claim that the following diagram of chain complex maps
	\[
	\begin{array}{ccc}
		\D V\otimes_{kN}\mathbf P & \xrightarrow{\ \theta_{\mathbf P}\ } & \D\Hom_{kN}(\mathbf P,V)\\[2pt]
		{\scriptstyle \D V\otimes u}\uparrow\ \ & & \ \ \uparrow{\scriptstyle \D\Hom(u,V)}\\[2pt]
		\D V\otimes_{kN}\mathbf P' & \xrightarrow[\ \cong\ ]{\ \theta_{\mathbf P'}\ } & \D\Hom_{kN}(\mathbf P',V)
	\end{array}
	\]
	is commutative. In other words, we claim that the following diagram of abelian groups 
	
	\[
	\begin{array}{ccc}
		\D V\otimes_{kN}P_i & \xrightarrow{\ \theta_{P_i}\ } & \D\Hom_{kN}(P_i,V)\\[2pt]
		{\scriptstyle \D V\otimes u}\uparrow\ \ & & \ \ \uparrow{\scriptstyle \D\Hom(u,V)}\\[2pt]
		\D V\otimes_{kN}P'_i & \xrightarrow[\ \cong\ ]{\ \theta_{P'_i}\ } & \D\Hom_{kN}(P'_i,V)
	\end{array}
	\]
	is commutative for every $i\geq 0$.
	Indeed, the maps $\theta_{P_i}$ and $\theta_{P'_i}$ are the maps of Lemma \ref{lem:eval} and hence the commutativity follows from the naturality established there. Since the two vertical maps of the diagram are homotopy equivalences and the horizontal map at the bottom is bijective (Lemma \ref{lem:eval}), all of these three chain complex maps are quasi-isomorphisms. It follows readily that the horizontal map at the top is a quasi-isomorphism as well. Finally, we note that the exact functor $\D$ commutes with homology. Hence, for every $q\geq 0$ we deduce that
	\[H_q(N,\D V)=H_q\big(\D V\otimes_{kN}\mathbf P\big)\xrightarrow[\ \cong\ ]{\ \theta\ }H_q\big(\D\Hom_{kN}(\mathbf P,V)\big)\cong\D H^q(N,V),
	\] as needed.
\end{proof}

\begin{corollary}\label{cor:coind}
Let $N$ be of type $\mathrm{FP}_\infty$ over $k$ and $\alpha$ be a cardinal. Then, for every $q\ge0$, there is an isomorphism of $kQ$-modules:
\[
H_q\big(N,\D(kG^{(\alpha)})\big)\;\cong\;\Coind_1^Q \big( \D\big(H^q(N,kN)^{(\alpha)}\big)\big)
=\Hom_k\big(kQ,\D\big(H^q(N,kN)^{(\alpha)}\big)\big).
\]
\end{corollary}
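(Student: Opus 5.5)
Apply Lemma \ref{lem:duality} with $V=kG^{(\alpha)}$. This gives a natural isomorphism of $kQ$-modules
\[
H_q\big(N,\D(kG^{(\alpha)})\big)\cong \D H^q\big(N,kG^{(\alpha)}\big).
\]
It then remains to identify $H^q(N,kG^{(\alpha)})$ as a $kQ$-module, and to dualize.

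First, I would show that $H^q(N,-)$ commutes with direct sums. Since $N$ is of type $\mathrm{FP}_\infty$ over $k$, we may compute $H^q(N,-)$ with a resolution $\mathbf P'$ of finitely generated projective $kN$-modules. Then $\Hom_{kN}(P'_i,-)$ commutes with arbitrary direct sums, so
\[
H^q(N,kG^{(\alpha)})\cong H^q(N,kG)^{(\alpha)}.
\]
This isomorphism is compatible with the $kQ$-action, by naturality and the fact that the conjugation action of Lemma \ref{lem:conj} is defined componentwise.

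Next, I identify $H^q(N,kG)$. As a $kN$-module, $kG=\bigoplus_{t\in T}kN\,t$, where $T$ is a set of coset representatives for $N$ in $G$. The same finiteness argument then gives
\[
H^q(N,kG)\cong \bigoplus_{t} H^q(N,kN)\,t\cong H^q(N,kN)\otimes_k kQ.
\]
This is the standard identification of $H^q(N,kG)$ with the induced module $\Ind_1^Q H^q(N,kN)=kQ\otimes_k H^q(N,kN)$. Here $Q$ acts by permuting the summands, and the $kQ$-structure of Lemma \ref{lem:conj} on $H^q(N,kN)$ enters only through the untwisting isomorphism $kQ\otimes_k X\cong kQ\otimes_k X_{\mathrm{triv}}$. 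Combining with the previous step,
\[
H^q(N,kG^{(\alpha)})\cong kQ\otimes_k H^q(N,kN)^{(\alpha)}.
\]

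Finally, dualize. For any $k$-module $X$, adjunction gives
\[
\D(kQ\otimes_k X)=\Hom_\Z(kQ\otimes_k X,\Q/\Z)\cong\Hom_k\big(kQ,\Hom_\Z(X,\Q/\Z)\big)=\Hom_k(kQ,\D X),
\]
and a direct check shows that the contragredient $Q$-action on the left corresponds to the coinduced action on the right. Taking $X=H^q(N,kN)^{(\alpha)}$ gives $\Coind_1^Q\D(H^q(N,kN)^{(\alpha)})$, as claimed.

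The main obstacle I expect is the $kQ$-equivariance of the identification $H^q(N,kG)\cong kQ\otimes_k H^q(N,kN)$. The $G$-action from Lemma \ref{lem:conj} mixes conjugation on the resolution with translation of the coefficients, so the identification must be built carefully. I would first do it at the cochain level: $\Hom_{kN}(\mathbf P,kG)\cong\bigoplus_t\Hom_{kN}(\mathbf P,kN)t$ for a $kG$-resolution $\mathbf P$. Only afterwards would I pass to $\mathbf P'$ through the homotopy equivalence, as in the proof of Lemma \ref{lem:duality}, to justify commuting with the sum. The induced-equals-untwisted trick then removes the residual twist.
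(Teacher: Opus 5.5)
Your proposal follows the paper's proof: Lemma~\ref{lem:duality} with $V=kG^{(\alpha)}$, commuting $H^q(N,-)$ with direct sums, the identification $H^q(N,kG)\cong\Ind_1^Q H^q(N,kN)$, and the adjunction $\D(\Ind_1^Q X)\cong\Coind_1^Q(\D X)$; the only difference is that the paper cites \cite[Corollary~4.3]{ET22} for the induced-module identification, whereas you sketch a proof of it. In that sketch, two points need correcting:
\begin{enumerate}
\item For a $kG$-resolution $\mathbf P$, the natural map $\bigoplus_t\Hom_{kN}(\mathbf P,kN)\,t\to\Hom_{kN}(\mathbf P,kG)$ is only a $G$-stable inclusion, not an isomorphism, because the $P_i$ need not be finitely generated over $kN$. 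It is a quasi-isomorphism by comparison with $\mathbf P'$, and the same holds for $kG^{(\alpha)}$.
\item The untwisting step is unnecessary. Each $g$ carries the $t$-summand onto the $t'$-summand with $gNt=Nt'$, so $Q$ permutes the summands $H^q(N,kN)\,t$ simply transitively. By imprimitivity the module is therefore induced from the trivial subgroup, and only the $k$-module structure of $H^q(N,kN)$ enters.
\end{enumerate}
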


\begin{proof}
Since $N$ is $\mathrm{FP}_\infty$, the functors $H^q(N,-)$ commute with direct sums and hence \cite[Corollary~4.3]{ET22} yields
$H^q\big(N,kG^{(\alpha)}\big)\cong H^q(N,kG)^{(\alpha)}\cong\Ind_1^Q\big(H^q(N,kN)^{(\alpha)}\big).$
Applying Lemma \ref{lem:duality} for $V=kG^{(\alpha)}$ we deduce that $H_q(N,\D(kG^{(\alpha)}))\cong\D H^q(N,kG^{(\alpha)})$. Furthermore, for every $k$-module $X$ we have $\D \big(\Ind_1^Q X\big)=\Hom_\Z(kQ\otimes_kX,\Q/\Z)\cong\Hom_k\big(kQ,\Hom_\Z(X,\Q/\Z)\big)=\Coind_1^Q\big(\D X\big)$. Consequently, we get the following isomorphisms of $kQ$-modules
\begin{align*}
H_q(N,\D(kG^{(\alpha)}))&\cong\D H^q(N,kG^{(\alpha)})\cong \D \big(\Ind_1^Q\big( H^q(N,kN)^{(\alpha)}\big)\big)\\
&\cong \Coind_1^Q\big(\D \big(H^q(N,kN)^{(\alpha)}\big)\big),
\end{align*}
as needed.
\end{proof}

\begin{corollary}\label{cor:44}
Under the hypotheses of Corollary \ref{cor:coind}:
\begin{enumerate}[label=(\roman*)]
\item If the $k$-module $H^q(N,kN)$ is flat, then $H_q\big(N,\D(kG^{(\alpha)})\big)$ is an injective $kQ$-module.
\item If, for some integer $n\geq 0$, the $k$-module $H^n(N,kN)$ contains $k$ as a direct summand, then the $kQ$-module $H_n\big(N,\D(kG^{(\alpha)})\big)$ contains a copy of the injective module $\D\big(kQ^{(\alpha)}\big)$ as a direct summand.
\end{enumerate}
\end{corollary}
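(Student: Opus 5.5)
The plan is to start from Corollary \ref{cor:coind}, which gives an isomorphism of $kQ$-modules
\[
H_q\big(N,\D(kG^{(\alpha)})\big)\cong\Hom_k\big(kQ,\D X\big),\qquad X=H^q(N,kN)^{(\alpha)},
\]
and then argue with the coinduced module directly. The basic tool is the adjunction $\Hom_{kQ}\big(Y,\Hom_k(kQ,Z)\big)\cong\Hom_k(Y,Z)$ for a $k$-module $Z$, which is natural in the $kQ$-module $Y$. It shows that $\Coind_1^Q$ carries injective $k$-modules to injective $kQ$-modules, since $\Hom_{kQ}(-,\Coind_1^Q Z)\cong\Hom_k(-,Z)$ is then exact.

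For (i), suppose the $k$-module $H^q(N,kN)$ is flat. A direct sum of flat modules is flat, so $X$ is a flat $k$-module, and the Pontryagin dual $\D X$ is then an injective $k$-module by the property of $\D$ recalled before Lemma \ref{lem:conj}. The adjunction above shows $\Coind_1^Q(\D X)$ is injective over $kQ$, and hence so is $H_q\big(N,\D(kG^{(\alpha)})\big)$.

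For (ii), write $H^n(N,kN)=k\oplus Y$ as $k$-modules. Then $X=k^{(\alpha)}\oplus Y^{(\alpha)}$, and since $\D$ and $\Hom_k(kQ,-)$ are additive we obtain the splitting
\[
\Coind_1^Q(\D X)\cong\Coind_1^Q\big(\D(k^{(\alpha)})\big)\oplus\Coind_1^Q\big(\D(Y^{(\alpha)})\big).
\]
It remains to identify the first summand. Using the isomorphism $\D(\Ind_1^Q Z)\cong\Coind_1^Q(\D Z)$ from the proof of Corollary \ref{cor:coind} with $Z=k^{(\alpha)}$, together with $\Ind_1^Q(k^{(\alpha)})=kQ\otimes_k k^{(\alpha)}\cong kQ^{(\alpha)}$, we get $\Coind_1^Q\big(\D(k^{(\alpha)})\big)\cong\D\big(kQ^{(\alpha)}\big)$. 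This module is injective because $kQ^{(\alpha)}$ is flat, or alternatively by part (i) applied with $H^n(N,kN)$ replaced by $k$.

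The main point needing care is that all these isomorphisms are $kQ$-linear. In particular, the identification $\D(kQ\otimes_k Z)\cong\Hom_k(kQ,\D Z)$ must respect the contragredient $Q$-action on the left and the coinduced action on the right. I would check this on elements, sending $\psi$ to $\big(x\mapsto(z\mapsto\psi(x\otimes z))\big)$. Corollary \ref{cor:coind} already asserts an isomorphism of $kQ$-modules, so this should be routine.
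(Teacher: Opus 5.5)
Your proposal is correct and follows the paper's proof essentially step for step: both deduce (i) and (ii) from Corollary \ref{cor:coind} via the facts that $\Coind_1^Q$ preserves injectives, that $\D$ and $\Coind_1^Q$ are additive, and that $\Coind_1^Q\D(k^{(\alpha)})\cong\D(kQ^{(\alpha)})$; in (i) you are slightly more careful than the paper in noting that $H^q(N,kN)^{(\alpha)}$ itself is flat. One minor caution for your elementwise check: with the usual coinduced action $(gf)(x)=f(xg)$ and the contragredient action on $\D(kQ\otimes_k Z)$, the map $\psi\mapsto\big(x\mapsto\psi(x\otimes -)\big)$ becomes $kQ$-linear only after precomposing with the inversion $g\mapsto g^{-1}$ on $kQ$, which does not affect the conclusion.
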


\begin{proof}
(i) If the $k$-module $H^q(N,kN)$ is flat, then the $k$-module $\D\big(H^q(N,kN)\big)$ is injective and hence the coinduced module $\Coind_1^Q \big(\D\big(H^q(N,kN)\big)\big)$ is an injective $kQ$-module. Invoking Corollary \ref{cor:coind}, we deduce that $H_q\big(N,\D(kG^{(\alpha)})\big)$ is an injective $kQ$-module, as needed.

(ii) If the $k$-module $H^n(N,kN)$ contains $k$ as a direct summand, then the $k$-module $H^n(N,kN)^{(\alpha)}$ contains $k^{(\alpha)}$ as a direct summand and hence $\D (k^{(\alpha)})$ is a direct summand of $\D\big(H^n(N,kN)^{(\alpha)}\big)$. Thus, $\Coind_1^Q\D\big(k^{(\alpha)}\big)$ is a direct summand of $\Coind_1^Q\big(\D\big(H^n(N,kN)^{(\alpha)}\big)\big)$. From Corollary \ref{cor:coind} and the fact that $\Coind_1^Q\D\big(k^{(\alpha)}\big)\cong \D\big(kQ^{(\alpha)}\big)$, it follows that the $kQ$-module $\D\big(kQ^{(\alpha)}\big)$ is a direct summand of $H_n\big(N,\D(kG^{(\alpha)})\big)$, as needed.
\end{proof}

The next lemma is the homological analogue and a strengthening of the observation preceding \cite[Theorem~4.5]{ET22}. There, the finiteness of $\Gcd_kN$ is required, whereas here the finiteness of $\Ghd_kN$ suffices.

\begin{lemma}\label{lem:nonvanishing}
Let $N$ be a group of type $\mathrm{FP}_\infty$ over the commutative ring $k$ such that $\Ghd_kN=n<\infty$. Then, $H^n(N,kN)\neq0$.
\end{lemma}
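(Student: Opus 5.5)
We reduce to the cohomological statement through Corollary \ref{cor:star}. The ring hypothesis is the point to check first. Lemma \ref{lem:nonvanishing} as stated assumes nothing on $k$, but Corollary \ref{cor:star} needs $\sfli k<\infty$, which the ambient hypotheses of this section would supply. So we will either carry $\sfli k<\infty$ as a standing assumption, or avoid Corollary \ref{cor:star} altogether by the direct argument below, which uses only that $kN$ is Gorenstein flat (and PGF) when $\Gwgl(kN)$ is finite.

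The direct route runs as follows. Let $\mathbf P\to k$ be a resolution of the trivial $kN$-module by finitely generated free modules. By Corollary \ref{cor:completeres} (case $M=k$, $s=n$) it can be completed to a complex $\mathbf T$ of finitely generated free modules that is F-totally acyclic and $\Hom$-acyclic against projectives, with $T_i=P_i$ for $i\ge n$. Then, exactly as for complete resolutions in \cite[\S1]{ET22}, we have $H^n(N,kN)\cong\widehat{H}^n(N,kN)=H^n(\Hom_{kN}(\mathbf T,kN))$ for degrees $\ge n+1$ by coincidence. In degree $n$ we argue directly. Suppose $H^n(N,kN)=0$. Let $K_n=\im(P_n\to P_{n-1})$; it is PGF by Theorem \ref{thm:fpGFPGF}, and it is finitely presented. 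Since $\Ext^i_{kN}(K_n,kN)=0$ for $i\ge1$ (Gorenstein projectivity), dimension shifting identifies $H^n(N,kN)$ with $\coker\big(\Hom(P_{n-1},kN)\to\Hom(K_n,kN)\big)$. Its vanishing therefore means that every map $K_n\to kN$ extends over the inclusion $K_n\to P_{n-1}$.

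Now use the F-totally acyclic complex $\mathbf T$: $K_n$ embeds into $T_{n-1}$, a finitely generated free module $kN^m$, via the complete resolution. Both embeddings are $\Hom(-,Q)$-preenvelope-like, since the cokernel is Gorenstein projective. By the extension property applied coordinatewise, the embedding $K_n\to kN^m$ factors through $K_n\to P_{n-1}$. The standard comparison then shows that $K_{n-1}=\im(P_{n-1}\to P_{n-2})$ is a direct summand of a Gorenstein projective module (a pushout with the PGF cokernel of $\mathbf T$). This gives $\Gpd K_{n-1}=0$, and hence $\Gcd_kN\le n-1$, equivalently $\PGFcd$.

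Finally we must convert back. By Corollary \ref{cor:star} (under $\sfli k<\infty$), $\Ghd_kN=\Gcd_kN\le n-1$, contradicting $\Ghd_kN=n$. The case $n=0$ is separate: $H^0(N,kN)=(kN)^N\neq0$ iff $N$ is finite. When $\Ghd_kN=0$ and $N$ is $\mathrm{FP}_\infty$, $k$ is PGF, hence Gorenstein projective, so $\Gcd_kN=0$ and $N$ is finite \cite{ET18}; thus $H^0\ne0$.

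The main obstacle is the final comparison step: showing that extendability of maps $K_n\to kN$ forces $K_{n-1}$ to be Gorenstein projective. I would first try to simply cite the cohomological argument preceding \cite[Theorem~4.5]{ET22}, namely $\Gcd_kN=\sup\{i:\Ext^i(k,F)\ne0,\ F\text{ free}\}$, combined with $H^n(N,-)$ commuting with direct sums (type $\mathrm{FP}_\infty$). Then $H^n(N,kN)=0$ gives $H^n(N,F)=0$ for all free $F$, and hence $\Gcd_kN<n$ via Holm's characterization of $\Gpd$ for modules of finite Gorenstein projective dimension. Here the finiteness $\Gcd_kN=n<\infty$ comes from Corollary \ref{cor:star}. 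This is the cleanest path, and it is what I would write.
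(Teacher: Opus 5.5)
\paragraph{Verdict.} Your argument proves the lemma only under the extra hypothesis $\sfli k<\infty$, and that hypothesis is not available. The lemma is stated for an arbitrary commutative ring $k$, and the standing assumptions of Section~\ref{sec:feldman} only say that $k$ is commutative. So your remark that ``the ambient hypotheses of this section would supply'' it is mistaken.

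\paragraph{Where $\sfli k<\infty$ enters.} Your ``direct route'' does not avoid the hypothesis. Corollary~\ref{cor:completeres} and Theorem~\ref{thm:fpGFPGF} need $\Gwgl(kN)<\infty$, which comes from $\sfli k<\infty$ together with $\Ghd_kN<\infty$ via \cite[Theorem~3.14]{KS25}. It does not follow from $\Ghd_kN<\infty$ alone. Take $N=1$ and $k$ a non-Gorenstein Artinian local ring: then $\Ghd_kN=0$ while $\Gwgl(kN)=\Gwgl k=\infty$. Your final contradiction also goes through Corollary~\ref{cor:star}, which assumes $\sfli k<\infty$. Your preferred ``cleanest path'' is exactly the observation of \cite{ET22}. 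That argument needs $\Gcd_kN<\infty$, and you can only reach it from $\Ghd_kN<\infty$ through Corollary~\ref{cor:star}.

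\paragraph{What you have established.} You have the lemma under $\sfli k<\infty$. That is enough for its use in Corollary~\ref{cor:free}, and in that setting both of your routes are correct. The comparison step you worried about is the standard one: the factorization gives an exact sequence $0\to P_{n-1}\to K_{n-1}\oplus T_{n-1}\to C\to 0$ with $C$ PGF, so $K_{n-1}$ is PGF. It is not a proof of the statement as given.

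\paragraph{The missing idea: stay on the homological side.} There no ring hypothesis is needed.
\begin{enumerate}
\item Bennis's formula (Proposition~\ref{prop:bennis}) holds over every ring, since every ring is GF-closed. So $\Gfd_{kN}k=n<\infty$ gives an injective $kN$-module $I$ with $\Tor_n^{kN}(I,k)\neq0$.
\item $I$ is a direct summand of some $\D(kN^{(\beta)})$, so $H_n\big(N,\D(kN^{(\beta)})\big)\neq0$.
\item Apply Lemma~\ref{lem:duality} with $G=N$; it uses only that $N$ is of type $\mathrm{FP}_\infty$. Since $H^n(N,-)$ commutes with direct sums, this gives
\[
\D H^n\big(N,kN^{(\beta)}\big)\cong\D\big(H^n(N,kN)^{(\beta)}\big)\neq0,
\]
hence $H^n(N,kN)\neq0$.
\end{enumerate}
This never passes through $\Gcd_kN$. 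That is precisely why the paper can replace the finiteness of $\Gcd_kN$ required in \cite{ET22} by the finiteness of $\Ghd_kN$, with no condition on $k$.
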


\begin{proof} Since $\Gfd_{kN}k=n<\infty$, Proposition \ref{prop:bennis} yields the existence of an injective $kN$-module $I$ such that $\Tor_n^{kN}(I,k)\neq0$. Now $I$ is a direct summand of a module $\D\big(kN^{(\beta)}\big)$ for a suitable $\beta$ and hence $\Tor_n^{kN}\big(\D(kN^{(\beta)}),k\big)=H_n\big(N,\D(kN^{(\beta)})\big)\neq0$. By Lemma \ref{lem:duality} we have $H_n\big(N,\D(kN^{(\beta)})\big)\cong \D H^n\big(N,kN^{(\beta)}\big)$. Since $N$ is of type $\mathrm{FP}_\infty$, the cohomology functor $H^n(N,-)$ commutes with direct sums and hence 
$H_n\big(N,\D(kN^{(\beta)})\big)\cong \D\big(H^n(N,kN)^{(\beta)}\big)$. It follows that $\D\big(H^n(N,kN)^{(\beta)}\big)\neq0$. We conclude that $H^n(N,kN)\neq0$, as needed.
\end{proof}

\begin{theorem}\label{thm:feldman}
Let $k$ be a commutative ring with $\sfli k<\infty$ and let $1\to N\to G\to Q\to1$ be a group extension such that:
\begin{enumerate}[label=(\roman*)]
\item $N$ is of type $\mathrm{FP}_\infty$ over $k$,
\item $\Ghd_kN=n<\infty$ and $\Ghd_kQ<\infty$,
\item the $k$-modules $H^i(N,kN)$ are flat for $i<n$, and $H^n(N,kN)$ contains a copy of $k$ as a direct $k$-summand.
\end{enumerate}
Then, $\Ghd_kG=\Ghd_kN+\Ghd_kQ$.
\end{theorem}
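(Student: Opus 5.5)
The inequality $\Ghd_kG\le\Ghd_kN+\Ghd_kQ$ is Proposition \ref{prop:properties}(iii), so only the reverse inequality needs proof. Set $m=\Ghd_kQ<\infty$; then $\Ghd_kG\le n+m<\infty$. By Proposition \ref{prop:properties}(i) every $kG$-module has finite Gorenstein flat dimension. By Proposition \ref{prop:bennis}, it therefore suffices to find an injective $kG$-module $J$ with $H_{n+m}(G,J)=\Tor^{kG}_{n+m}(J,k)\neq0$. We take $J=\D(kG^{(\alpha)})$ for a cardinal $\alpha$ to be chosen; it is injective because $kG^{(\alpha)}$ is flat. We then run the Lyndon--Hochschild--Serre spectral sequence \eqref{eq:LHS} with $W=J$.

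First we locate the support of $E^2_{pq}=H_p(Q,H_q(N,J))$. For $q>n$ we have $H_q(N,J)=\Tor_q^{kN}(J,k)=0$, because $J$ restricted to $kN$ is still injective. (It is $\D$ of the flat $kN$-module $kG^{(\alpha)}$.) Since $\Gfd_{kN}k=n$, Proposition \ref{prop:bennis} gives the vanishing. For $q<n$, Corollary \ref{cor:44}(i) and hypothesis (iii) show that $H_q(N,J)$ is an injective $kQ$-module. Hence $E^2_{pq}=\Tor^{kQ}_p(H_q(N,J),k)=0$ for $p>m$, since $\Gfd_{kQ}k=m$.

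We also need $E^2_{pq}=0$ for $p\ge1$ once $q<n$, or at least that these terms cannot hit the corner. A cleaner route is to note only that the corner term $E^2_{m,n}$ has no nonzero differentials in or out. Outgoing differentials $d^r:E^r_{m,n}\to E^r_{m-r,n+r-1}$ land in rows $q=n+r-1>n$ for $r\ge2$, which vanish. Incoming differentials $d^r:E^r_{m+r,n-r+1}\to E^r_{m,n}$ come from columns $p=m+r>m$ in rows $q=n-r+1<n$, which vanish by the previous paragraph. The total degree $n+m$ is also the top total degree, since $E^2_{pq}=0$ when $q>n$, and when $q\le n$ with $p+q>n+m$ forces $p>m$. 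So the only contribution to $H_{n+m}(G,J)$ is $E^\infty_{m,n}=E^2_{m,n}$, giving $H_{n+m}(G,J)\cong H_m(Q,H_n(N,J))$.

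Finally we show the corner is nonzero. By Corollary \ref{cor:44}(ii), $H_n(N,J)$ contains $\D(kQ^{(\alpha)})$ as a direct summand, so $H_m(Q,\D(kQ^{(\alpha)}))$ is a direct summand of $E^2_{m,n}$. Because $\Gfd_{kQ}k=m$, Proposition \ref{prop:bennis} gives an injective $kQ$-module $I$ with $\Tor_m^{kQ}(I,k)\ne0$. Since $I$ is a direct summand of some $\D(kQ^{(\beta)})$, we may choose $\alpha=\beta$ and get $H_m(Q,\D(kQ^{(\alpha)}))\ne0$, exactly as in Lemma \ref{lem:nonvanishing}. This yields $H_{n+m}(G,J)\neq0$, hence $\Ghd_kG\ge n+m$.

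The main obstacle is the careful bookkeeping of the spectral sequence. This includes checking that the $kQ$-module structures on $H_q(N,J)$ are those used in Corollary \ref{cor:coind}, which Lemma \ref{lem:duality} provides. It also requires that rows $q<n$ vanish for $p>m$, which uses the injectivity from the flatness hypothesis and is precisely where that hypothesis enters.
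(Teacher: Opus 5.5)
Your proposal is correct and follows the paper's proof essentially step by step. It uses the same test module $\D(kG^{(\alpha)})$, with $\alpha$ chosen from an injective $kQ$-module witnessing $\Gfd_{kQ}k=m$. It proves the same vanishing of $E^2_{pq}$ for $q>n$, and for $q<n$, $p>m$ via Corollary~\ref{cor:44}(i), and shows the corner $E^2_{m,n}$ is nonzero via Corollary~\ref{cor:44}(ii).

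One small correction: your assertion that $n+m$ is the top total degree is not justified by what you proved. You established vanishing for $p>m$ only in rows $q<n$, and the row $q=n$ (the horizontal ray the paper explicitly keeps) is not covered. The assertion is also unnecessary: on the diagonal $p+q=n+m$, your two vanishing statements kill every term except $(m,n)$, and together with your differential check this gives $H_{n+m}(G,J)\cong E^2_{m,n}$.
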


\begin{proof}Let $\Ghd_kN=n<\infty$ and $\Ghd_kQ=m<\infty$. By \cite[Proposition~4.4]{RY} we have $\Ghd_kG\le n+m <\infty$ and hence it suffices to show that $\Ghd_kG\ge n+m$. Since $\Ghd_kG<\infty$, Proposition \ref{prop:bennis} yields that
\[
\Ghd_kG=\sup\{i:\Tor_i^{kG}(W,k)\neq0,\ W\ \text{an injective }kG\text{-module}\}.
\] Consequently, it suffices to find an injective $kG$-module $W$ such that $H_{n+m}(G,W)=\Tor_{n+m}^{kG}(W,k)\neq0$.
Using again Proposition \ref{prop:bennis} for the dimension $\Gfd_{kQ}k=m<\infty$, we deduce that there exists an injective $kQ$-module $I$ such that $\Tor_m^{kQ}(I,k)\neq0$. We choose now a cardinal $\alpha$ such that $I$ is a direct summand of $\D\big(kQ^{(\alpha)}\big)$, so that
\begin{equation}\label{eq:nonzeroQ}
\Tor_m^{kQ}\big(\D(kQ^{(\alpha)}),k\big)\neq0 .
\end{equation}
We let $W=\D\big(kG^{(\alpha)}\big)$. Since the module $kG^{(\alpha)}$ is flat, $W$ is an injective $kG$-module and its restriction to $kN$ is also an injective $kN$-module. We consider the spectral sequence \eqref{eq:LHS} for $W$ and examine its vanishing pattern. Since $W$ is an injective $kN$-module and $\Gfd_{kN}k=n<\infty$, it follows that $H_q(N,W)=\Tor_q^{kN}(W,k)=0$ for every $q>n$ (see Proposition \ref{prop:bennis}). Consequently $E^2_{pq}=0$ for every $q>n$, in the spectral sequence (\ref{eq:LHS}). We consider now the case where $q<n$ and $p>m$. By hypothesis (iii) and Corollary \ref{cor:44}(i), the $kQ$-module $H_q(N,W)$ is injective. Since $\Gfd_{kQ}k=m<\infty$, Proposition \ref{prop:bennis} yields $E^2_{pq}=\Tor_p^{kQ}\big(H_q(N,W),k\big)=0$ in the spectral sequence (\ref{eq:LHS}).
The second page of (\ref{eq:LHS}) is therefore concentrated in the rectangle $[0,m]\times[0,n]$ and the horizontal ray $\{(p,n):p\ge0\}$. Consequently,
\[
H_{n+m}(G,W)\;\cong\;E^\infty_{m,n}=E^2_{m,n}=H_m\big(Q,H_n(N,W)\big).
\]
By Corollary \ref{cor:44}(ii) (hypothesis (iii)), the module $H_n(N,W)$ contains $\D\big(kQ^{(\alpha)}\big)$ as a direct $kQ$-summand, so that $H_m(Q,H_n(N,W))$ contains $\Tor_m^{kQ}\big(\D(kQ^{(\alpha)}),k\big)$ as a direct summand, which is nonzero by \eqref{eq:nonzeroQ}. We conclude that $H_{n+m}(G,W)\neq0$ and hence $\Ghd_kG\ge n+m$, as needed.
\end{proof}

\begin{corollary}\label{cor:free}
Let $k$ be a commutative ring such that $\sfli k<\infty$ and let $1\to N\to G\to Q\to1$ be a group extension where $N$ is of type $\mathrm{FP}_\infty$ over $k$, $\Ghd_kN$ and $\Ghd_kQ$ are finite, and the $k$-modules $H^i(N,kN)$ are free for every $i\le n$, where $n=\Ghd_kN$. Then, $\Ghd_kG=\Ghd_kN+\Ghd_kQ$.
\end{corollary}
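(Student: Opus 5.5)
The plan is to deduce Corollary \ref{cor:free} directly from Theorem \ref{thm:feldman} by checking its hypotheses (i)--(iii). Hypotheses (i) and (ii) are assumed verbatim, so only (iii) needs attention.

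For $i<n$, the $k$-module $H^i(N,kN)$ is free, hence flat, which is the first half of (iii).

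For $i=n$, the $k$-module $H^n(N,kN)$ is free by assumption. I will show it is nonzero using Lemma \ref{lem:nonvanishing}, which applies because $N$ is of type $\mathrm{FP}_\infty$ over $k$ and $\Ghd_kN=n<\infty$. A nonzero free $k$-module $k^{(\beta)}$ with $\beta\neq\emptyset$ contains $k$ as a direct summand, namely any single coordinate copy. This gives the second half of (iii).

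With (i)--(iii) verified, Theorem \ref{thm:feldman} yields $\Ghd_kG=\Ghd_kN+\Ghd_kQ$. The only non-formal step is the nonvanishing of $H^n(N,kN)$. This is exactly why the corollary can get by with freeness in place of the explicit direct-summand hypothesis, and it is already settled by Lemma \ref{lem:nonvanishing}. One edge case needs a remark: if $k=0$, all modules vanish and the statement is trivial, so we may assume $k\neq0$.
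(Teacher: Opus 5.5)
Your proposal is correct and matches the paper's proof: you verify hypothesis (iii) of Theorem~\ref{thm:feldman}, using Lemma~\ref{lem:nonvanishing} to show that $H^n(N,kN)\neq 0$. A nonzero free $k$-module then contains $k$ as a direct summand, and the free modules in degrees $i<n$ are flat.
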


\begin{proof}Let $\Ghd_kN=n<\infty$. By Lemma \ref{lem:nonvanishing} we have $H^n(N,kN)\neq0$. Consequently, $H^n(N,kN)$ is a nonzero free $k$-module and hence it contains $k$ as a direct summand. Since hypothesis (iii) of Theorem \ref{thm:feldman} holds, the result follows.
\end{proof}

\begin{theorem}[Theorem D]\label{thm:feldmanfield}
Let $F$ be a field and let $1\to N\to G\to Q\to1$ be a group extension with $N$ of type $\mathrm{FP}_\infty$ over $F$ and $\Ghd_FQ<\infty$. Then,
\[
\Ghd_FG=\Ghd_FN+\Ghd_FQ.
\]
\end{theorem}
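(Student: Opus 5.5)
The plan is to deduce Theorem D from Corollary \ref{cor:free} with $k=F$. There are three hypotheses to check: $\sfli F<\infty$, $\Ghd_FN<\infty$, and freeness of the $F$-modules $H^i(N,FN)$ for $i\le n$. Two of these are immediate. Since $F$ is a field, every $F$-module is free, so $\sfli F=0$ and each $H^i(N,FN)$ is a free $F$-module. The finiteness of $\Ghd_FQ$ is assumed.

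The substantive point is to show $\Ghd_FN<\infty$, which the statement does not assume. I would argue as follows.

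\begin{enumerate}
\item \emph{Finiteness of $\Ghd_FG$.} By Proposition \ref{prop:properties}(ii), $\Ghd_FN\le\Ghd_FG$, so it suffices to bound $\Ghd_FG$. Subadditivity (Proposition \ref{prop:properties}(iii)) would give this, but it already requires $\Ghd_FN<\infty$, so this route is circular.

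\item \emph{Direct argument on $N$.} I would instead work with $N$ directly. Since $F$ is a field, the hope is that $\sfli(FN)<\infty$, equivalently $\Gwgl(FN)<\infty$. That would make $\Gfd_{FN}F$ finite by \cite[Theorem~2.4]{CET21} together with Proposition \ref{prop:properties}(i). However, finiteness of $\sfli(FN)$ is not automatic for an arbitrary group.

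\item \emph{Reading of the intended statement.} I therefore expect the theorem to be read with the convention that both sides may be infinite. If $\Ghd_FN=\infty$, then $\Ghd_FG\ge\Ghd_FN=\infty$ by Proposition \ref{prop:properties}(ii), which uses $\sfli F=0<\infty$. The equality then holds trivially as $\infty=\infty$.
\end{enumerate}

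So I split into cases. If $\Ghd_FN=\infty$, Proposition \ref{prop:properties}(ii) gives $\Ghd_FG=\infty$ and the equality holds. If $n=\Ghd_FN<\infty$, all hypotheses of Corollary \ref{cor:free} hold, and it gives $\Ghd_FG=n+\Ghd_FQ$.

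Inside that corollary, Lemma \ref{lem:nonvanishing} ensures $H^n(N,FN)\ne0$. Over a field this means $H^n(N,FN)$ contains $F$ as a direct summand, which is exactly the nondegeneracy needed at the corner $E^2_{m,n}$ of the Lyndon--Hochschild--Serre spectral sequence in Theorem \ref{thm:feldman}.

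The only step needing care is the infinite case: confirming that Proposition \ref{prop:properties}(ii) applies without any finiteness assumption on $\Ghd_FG$. Its only hypothesis is $\sfli k<\infty$, which holds here, so I expect no obstacle there.
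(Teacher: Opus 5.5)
Your proof is correct and matches the paper's argument: if $\Ghd_FN=\infty$, Proposition~\ref{prop:properties}(ii) (which needs only $\sfli F=0$) forces $\Ghd_FG=\infty$; if $\Ghd_FN<\infty$, Corollary~\ref{cor:free} applies because every $F$-module is free. Your exploratory steps (i)--(iii) about proving finiteness of $\Ghd_FN$ directly can be dropped, since this case split already covers the possibility that $\Ghd_FN$ is infinite.
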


\begin{proof}
If $\Ghd_FN=\infty$, then $\Ghd_FG=\infty$ by Proposition \ref{prop:properties}(ii), and the equality holds trivially. We suppose now that $\Ghd_FN<\infty$. Then, the result is an immediate consequence of Corollary \ref{cor:free}. 
\end{proof}

\begin{remark}\label{rem:subadd} Dranishnikov \cite{Dra97} constructed groups $G_1,G_2$ of type $\mathrm{FP}_\infty$ of finite virtual cohomological dimension with $\mathrm{vcd}_\Z(G_1\times G_2)<\mathrm{vcd}_\Z G_1+\mathrm{vcd}_\Z G_2$. Since $G_1,G_2,G_1\times G_2$ are $\mathrm{FP}_\infty$ over $\Z$, Corollary \ref{cor:star} gives $\Ghd_\Z=\Gcd_\Z=\mathrm{vcd}_\Z$ and hence $\Ghd_\Z(G_1\times G_2)<\Ghd_\Z G_1+\Ghd_\Z G_2.$ Thus, hypothesis (iii) cannot be completely omitted from Theorem \ref{thm:feldman}. 
\end{remark}

\section{Application: iterated extensions}\label{sec:appl}

As in \cite{ET22}, for a group $N$ and a positive integer $m$ we define inductively the \emph{iterated $m$-fold extension} $G$ of $N$ by itself: for $m=1$, $G=N$ and for $m>1$, the group $G$ fits into an extension $1\to N\to G\to Q\to1$, where $Q$ is an iterated $(m{-}1)$-fold extension of $N$ by itself.

\begin{theorem}[Theorem E]\label{thm:iterated}
Let $N$ be a group of type $\mathrm{FP}_\infty$ over $\Z$, let $m$ be a positive integer and let $G$ be an iterated $m$-fold extension of $N$ by itself. Then,
\[
\Ghd_\Z G=m\,\Ghd_\Z N.
\]
Moreover, $\Ghd_\Z G=\Gcd_\Z G$ and $\Ghd_\Z N=\Gcd_\Z N$.
\end{theorem}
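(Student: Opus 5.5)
The plan is to argue by induction on $m$. The case $m=1$ is trivial. The difficulty is that over $\Z$ the hypothesis of Theorem \ref{thm:feldman} on the modules $H^i(N,\Z N)$ may fail; Remark \ref{rem:subadd} shows that a direct attack over $\Z$ need not work. So I will pass to a field using Theorem F and then apply Theorem D there.

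First I record that an iterated extension of groups of type $\mathrm{FP}_\infty$ is again of type $\mathrm{FP}_\infty$, since extensions of $\mathrm{FP}_\infty$ groups are $\mathrm{FP}_\infty$. Hence $G$, $Q$ and $N$ are all of type $\mathrm{FP}_\infty$ over $\Z$, and over every commutative ring $k$. Corollary \ref{cor:star} then gives $\Ghd_\Z G=\Gcd_\Z G$ and $\Ghd_\Z N=\Gcd_\Z N$, which is the last assertion.

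Next I treat the infinite case. If $\Ghd_\Z N=\infty$, then $\Ghd_\Z G=\infty$ by Proposition \ref{prop:properties}(ii), since $N\le G$. Conversely, if $\Ghd_\Z N<\infty$, subadditivity (Proposition \ref{prop:properties}(iii)) and induction give $\Ghd_\Z G\le m\,\Ghd_\Z N<\infty$. So we may assume $n=\Ghd_\Z N<\infty$, and then all the groups involved have finite $\Ghd_\Z$.

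For the lower bound I apply Corollary \ref{thmE} to $G$ itself. This produces a field $F$ with $\Ghd_F G=\Ghd_\Z G$. Over $F$, every group in the tower is $\mathrm{FP}_\infty$, and by Proposition \ref{prop:properties}(iv) each has $\Ghd_F\le\Ghd_\Z<\infty$. Iterating Theorem D along the tower gives $\Ghd_F G=m\,\Ghd_F N$. Therefore
\[
\Ghd_\Z G=\Ghd_F G=m\,\Ghd_F N\le m\,\Ghd_\Z N,
\]
which only reproduces the upper bound. The genuine obstacle is the reverse inequality, namely showing $\Ghd_F N=\Ghd_\Z N$ for this particular $F$, or for some field compatible with $G$.

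To overcome this, I will follow \cite[Theorem~4.6]{ET22}, the cohomological version, and choose the field via $N$ rather than $G$. Corollary \ref{thmE} gives a field $F$ with $\Ghd_F N=n$. Theorem D applied iteratively over $F$ yields $\Ghd_F G=m n$. Combining this with Proposition \ref{prop:properties}(iv) and the upper bound gives
\[
mn=\Ghd_F G\le\Ghd_\Z G\le mn,
\]
so $\Ghd_\Z G=mn$.

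This version needs only that each iterated extension is $\mathrm{FP}_\infty$ over $F$, which we have, and that $\Ghd_F$ is finite for each quotient in the tower, which also holds. I expect the main care to be in the induction: at each stage, Theorem D must be applied to $1\to N\to G\to Q\to 1$ with $\Ghd_F Q=(m-1)n$ finite, and this is supplied by the inductive hypothesis over $F$.
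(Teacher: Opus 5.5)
Your argument is correct. Its final form is exactly the alternative proof recorded in the remark following the theorem: choose $F$ via Corollary~\ref{thmE} applied to $N$, iterate Theorem~\ref{thm:feldmanfield} over $F$, then squeeze with Proposition~\ref{prop:properties}(iii),(iv). The paragraph where you pick $F$ via $G$ is a dead end and can simply be deleted.

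The paper's main proof takes a different and much shorter route. In your second paragraph you already observe that $G$ and $N$ are $\mathrm{FP}_\infty$ over $\Z$, so Corollary~\ref{cor:star} gives $\Ghd_\Z G=\Gcd_\Z G$ and $\Ghd_\Z N=\Gcd_\Z N$. From there the paper just transcribes the cohomological result $\Gcd_\Z G=m\,\Gcd_\Z N$ of \cite[Theorem~5.1]{ET22}; that is the relevant reference, rather than the cohomological Fel$'$dman-type theorem you cite.

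Your route stays on the homological side, using the homological Fel$'$dman theorem and the field-detection theorem. This shows that the machinery of Section~\ref{sec:feldman} is enough to recover the result. It is not more elementary, though: Theorem~\ref{thm:field-detection} is itself proved by passing through $\Gcd$ and \cite[Theorem~3.6]{ET22}. It also needs bookkeeping that the main proof avoids, namely finiteness of $\Ghd_F$ along the tower and the upper bound via subadditivity.
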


\begin{proof}
Since the class of groups of type $\mathrm{FP}_\infty$ over $\Z$ is closed under extensions (see e.g.\ \cite[Proposition~2.7]{Bieri}), it follows that $G$ is of type $\mathrm{FP}_\infty$ over $\Z$. Moreover, $\sfli\Z\le\wgl\Z=1<\infty$ and hence Corollary \ref{cor:star} yields $\Ghd_\Z G=\Gcd_\Z G$ and $\Ghd_\Z N=\Gcd_\Z N$. By \cite[Theorem~5.1]{ET22} we have $\Gcd_\Z G=m\,\Gcd_\Z N$ which completes the proof.
\end{proof}

\begin{remark} For an alternative proof of the previous result, we can follow the proof of \cite[Theorem~5.1]{ET22}. If $\Ghd_\Z N=\infty$, Proposition \ref{prop:properties}(ii) gives $\Ghd_\Z G=\infty$. Otherwise, Theorem \ref{thm:field-detection} yields a field $F$ such that $\Ghd_FN=\Ghd_\Z N$. Using induction on $m$ and Theorem \ref{thm:feldmanfield} we deduce that $\Ghd_FG=m\,\Ghd_FN$. Then, the chain of inequalities
\[
m\,\Ghd_\Z N=m\,\Ghd_FN=\Ghd_FG\le\Ghd_\Z G\le m\,\Ghd_\Z N
\]
closes, where the first inequality is Proposition \ref{prop:properties}(iv) and the second follows from a repeated application of Proposition \ref{prop:properties}(iii). We conclude that $\Ghd_\Z G=m\,\Ghd_\Z N$, as needed.
\end{remark}

\begin{corollary}\label{cor:products}
If $N$ is of type $\mathrm{FP}_\infty$ over $\Z$ and $N^m$ denotes the direct product of $m$ copies of $N$, then $\Ghd_\Z N^m=m\,\Ghd_\Z N$. \qed
\end{corollary}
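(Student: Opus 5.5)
The plan is to deduce this directly from Theorem \ref{thm:iterated}, by realizing $N^m$ as an iterated $m$-fold extension of $N$ by itself.

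First, I would argue by induction on $m$ that $N^m$ fits the inductive definition. For $m=1$ there is nothing to prove. For $m>1$, I would use the split extension
\[
1\to N\to N^m\to N^{m-1}\to1,
\]
where $N$ embeds as the first factor and the quotient map is the projection onto the remaining $m-1$ factors. By the inductive hypothesis, $N^{m-1}$ is an iterated $(m-1)$-fold extension of $N$ by itself. Hence $N^m$ is an iterated $m$-fold extension of $N$ by itself.

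Since $N$ is of type $\mathrm{FP}_\infty$ over $\Z$, Theorem \ref{thm:iterated} then applies directly and gives $\Ghd_\Z N^m=m\,\Ghd_\Z N$. This covers the case $\Ghd_\Z N=\infty$ as well, because the theorem places no finiteness assumption on $\Ghd_\Z N$; in that case both sides are infinite.

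I expect no real obstacle. The only point to check is the matching with the definition: the kernel in each extension must be $N$ itself and the quotient must be the $(m-1)$-fold iterate, which is exactly the orientation chosen above.
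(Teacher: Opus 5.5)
Your proposal is correct and matches the paper, which records this corollary with \qed as an immediate consequence of Theorem \ref{thm:iterated}, with $N^m$ viewed as an iterated $m$-fold extension of $N$ by itself. Your induction via the split extension $1\to N\to N^m\to N^{m-1}\to1$ spells out exactly that step, in the orientation the definition requires.
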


\begin{corollary}\label{cor:vcd}
Let $N$ be of type $\mathrm{FP}_\infty$ over $\Z$ with $\mathrm{vcd}_\Z N=n<\infty$ and let $m$ be a positive integer.
\begin{enumerate}[label=(\roman*)]
\item If $G$ is an iterated $m$-fold extension of $N$ by itself, then $\Ghd_\Z G=mn$.
\item $\Ghd_\Z N^m=\Gcd_\Z N^m=\mathrm{vcd}_\Z N^m=mn$.
\end{enumerate}
\end{corollary}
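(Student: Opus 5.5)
The statement to prove is Corollary \ref{cor:vcd}. The plan is to reduce everything to Theorem \ref{thm:iterated} and Corollary \ref{cor:products}. This requires identifying $\Ghd_\Z N$ with $\mathrm{vcd}_\Z N$ when $N$ is of type $\mathrm{FP}_\infty$ over $\Z$ with $\mathrm{vcd}_\Z N=n<\infty$.

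The key step is the equality $\Gcd_\Z N=\mathrm{vcd}_\Z N$ for groups of finite virtual cohomological dimension, which is standard. Choose a finite index subgroup $H\le N$ with $\cd_\Z H=n$. Then $\Gcd_\Z H=\cd_\Z H=n$, because a torsion-free group of finite cohomological dimension has Gorenstein cohomological dimension equal to its cohomological dimension. Since $[N:H]<\infty$, the Gorenstein analogue of Serre's theorem gives $\Gcd_\Z N=\Gcd_\Z H$. Alternatively, one can use the analogue recorded in Section \ref{sec:prelim} directly on the homological side: $\Ghd_\Z N=\Ghd_\Z H$. For torsion-free $H$ of finite cohomological dimension and type $\mathrm{FP}_\infty$, Corollary \ref{cor:star} gives $\Ghd_\Z H=\Gcd_\Z H=\cd_\Z H=n$. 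Here $H$ is of type $\mathrm{FP}_\infty$, being of finite index in $N$. In either route Corollary \ref{cor:star} also applies to $N$ itself, since $\sfli\Z\le 1$, so $\Ghd_\Z N=\Gcd_\Z N=n$. The only input requiring care is $\Gcd_\Z H=\cd_\Z H$ for $\cd_\Z H<\infty$. This is standard: the projective dimension is finite, and for modules of finite projective dimension the Gorenstein projective dimension equals the projective dimension. I expect this identification to be the main, though mild, obstacle. Everything else is formal.

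For (i), Theorem \ref{thm:iterated} gives $\Ghd_\Z G=m\,\Ghd_\Z N=mn$.

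For (ii), $N^m$ is an iterated $m$-fold extension of $N$ by itself, via $1\to N\to N^m\to N^{m-1}\to 1$. Corollary \ref{cor:products} therefore yields $\Ghd_\Z N^m=mn$. The group $N^m$ is of type $\mathrm{FP}_\infty$ over $\Z$, since this class is closed under extensions. Corollary \ref{cor:star} then gives $\Gcd_\Z N^m=\Ghd_\Z N^m=mn$. Finally, $N^m$ contains $H^m$ with finite index, and $H^m$ is torsion-free of finite cohomological dimension. Hence $\mathrm{vcd}_\Z N^m=\cd_\Z H^m$ is finite, and by the identification of the previous paragraph applied to $N^m$ it equals $\Gcd_\Z N^m=mn$. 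This closes the chain of equalities in (ii).
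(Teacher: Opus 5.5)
Your proposal is correct and follows the paper's route. The paper's proof simply invokes the identification $\mathrm{vcd}_\Z=\Gcd_\Z$ for groups of finite virtual cohomological dimension, citing Emmanouil--Talelli, and then applies Theorem \ref{thm:iterated}, with Corollaries \ref{cor:products} and \ref{cor:star} for $N^m$; the only difference is that you sketch that identification yourself, via a torsion-free finite-index subgroup $H$, the equality $\Gcd_\Z H=\cd_\Z H$ when $\cd_\Z H<\infty$, and the Gorenstein analogue of Serre's theorem, which is a valid justification of the cited fact.
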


\begin{proof}
When $\mathrm{vcd}_\Z$ is defined and finite, it equals $\Gcd_\Z$ (see the introduction of \cite{ET22}) and Theorem \ref{thm:iterated} applies.
\end{proof}

\begin{remark}
(i) The inequality of Remark \ref{rem:subadd} shows that for arbitrary extensions of $\mathrm{FP}_\infty$ groups of finite $\Ghd_\Z$ additivity may fail, the subadditivity being strict.

(ii) All results of this section hold, with the same proofs, when $\Z$ is replaced by any principal ideal domain.
\end{remark}

\end{document}